\DeclareSymbolFont{EulerExtension}{U}{euex}{m}{n}
\DeclareMathSymbol{\euintop}{\mathop} {EulerExtension}{"52}
\DeclareMathSymbol{\euointop}{\mathop} {EulerExtension}{"48}
\def\al{\alpha}
\def\be{\beta}
\def\de{\delta}
\def\De{\Delta}
\def\ep{\epsilon}
\def\et{\eta}
\def\ga{\gamma}
\def\Ga{\Gamma}
\def\ka{\kappa}
\def\la{\lambda}
\def\La{\Lambda}
\def\na{\nabla}
\def\om{\omega}
\def\Om{\Omega}
\def\si{\sigma}
\def\Si{\Sigma}
\def\th{\theta}
\def\ze{\zeta}
\def\ve0{\varepsilon_0}
\def\vp{\varphi}
\def\vt{\vartheta}
\def\Ct{\tilde{C}}
\def\chic{\check{\chi}}
\def\d{\textrm{d}}
\def\dse{\displaystyle}
\def\ds{\slashed{\textrm{d}}}
\def\divg{\textrm{div}_g}
\def\divgs{{\textrm{div}}_{\slashed{g}}}
\def\D{\mathscr{D}}
\def\Des{\slashed{\Delta}}
\def\Et{\tilde{E}}
\def\f{\frac}
\def\geqslant{\geq}
\def\gs{\slashed{g}}
\def\leqslant{\leq}
\def\lot{\text{l.o.t.}}
\def\Lb{\underline{L}}
\def\Lbt{\tilde{\underline{L}}}
\def\Lr{\mathring{L}}
\def\Lt{\tilde{L}}
\def\Lies{\slashed{\mathcal{L}}}
\def\nas{\slashed{\nabla}}
\def\O{\mathcal{O}}
\def\p{\partial}
\def\pis{\slashed{\pi}}
\def\R{\mathscr{R}}
\def\t{\tilde}
\def\trg{{\textrm{tr}}_{g}}
\def\trgs{{\textrm{tr}}_{\slashed{g}}}
\def\Tr{\mathring{T}}
\def\widetilde{\tilde}
\begin{document}
\footskip=0pt
\footnotesep=2pt
\let\oldsection\section
\renewcommand\section{\setcounter{equation}{0}\oldsection}
\renewcommand\thesection{\arabic{section}}
\renewcommand\theequation{\thesection.\arabic{equation}}
\newtheorem{claim}{\noindent Claim}[section]
\newtheorem{theorem}{\noindent Theorem}[section]
\newtheorem{lemma}{\noindent Lemma}[section]
\newtheorem{proposition}{\noindent Proposition}[section]
\newtheorem{definition}{\noindent Definition}[section]
\newtheorem{remark}{\noindent Remark}[section]
\newtheorem{corollary}{\noindent Corollary}[section]
\newtheorem{example}{\noindent Example}[section]

\title{On the critical exponent $p_c$ of the 3D quasilinear wave equation $-\big(1+(\p_t\phi)^p\big)\p_t^2\phi+\De\phi=0$ with
short pulse initial data. I, global existence}
\author{Bingbing Ding$^{1,*}$, \quad Yu  Lu$^{1,*}$, \quad Huicheng Yin$^{1,}$
\footnote{Ding Bingbing (bbding@njnu.edu.cn, 13851929236@163.com), Lu Yu (15850531017@163.com) and Yin Huicheng (huicheng@nju.edu.cn, 05407@njnu.edu.cn)
are supported by the NSFC (No.11731007, No.12071223, No.11601236, No.11971237).}\vspace{0.5cm}\\
\small School of Mathematical Sciences and Mathematical Institute, Nanjing Normal University,\\
\small Nanjing, 210023, China.}
\date{}

\maketitle

\centerline {\bf Abstract} \vskip 0.3 true cm
For the 3D quasilinear wave equation $-\big(1+(\p_t\phi)^p\big)\p_t^2\phi+\De\phi=0$
with the short pulse initial data $(\phi,\p_t\phi)(1,x)=\big(\de^{2-\ve0}\phi_0(\frac{r-1}{\de},\om),
\de^{1-\ve0}\phi_1(\frac{r-1}{\de},\om)\big)$,
where $p\in\mathbb N$, $p\ge 2$, $0<\ve0<1$, $r=|x|$, $\om=
\f{x}{r}\in\mathbb S^2$, and $\de>0$ is sufficiently small, under the  outgoing constraint condition
$(\p_t+\p_r)^k\phi(1,x)=O(\de^{2-\ve0})$ for $k=1,2$,
we will establish the global existence of smooth large data solution $\phi$ when $p>p_c$ with $p_c=\f{1}{1-\ve0}$
being the critical exponent. In the forthcoming paper, when $1\le p\le p_c$, we show the
formation of the outgoing shock before the time $t=2$ under the suitable assumptions of $(\phi_0,\phi_1)$.

\vskip 0.2 true cm {\bf Keywords:} Short pulse initial data, critical exponent, incoming, outgoing,
inverse foliation density,

\qquad \qquad \quad Goursat problem
\vskip 0.2 true cm {\bf Mathematical Subject Classification:} 35L05, 35L72
\vskip 0.4 true cm

\tableofcontents

\section{Introduction}\label{Section 1}

In this paper, we are concerned with  the following 3D quasilinear wave equation
 \begin{equation}\label{main equation}
  -\big(1+(\p_t\phi)^p\big)\p_t^2\phi+\De\phi=0,
 \end{equation}
where $p\in\mathbb N$, $p\ge 2$, $x=(x^1,x^2,x^3)\in\mathbb R^3$, $t\ge 1$, $\p=(\p_t, \p_{x^1}, \p_{x^2}, \p_{x^3})
=(\p_t, \p_1, \p_2, \p_3)$  and $\De=\p_1^2+\p_2^2+\p_3^2$.

Let \eqref{main equation} equip with the small initial data
\begin{align}\label{Y-1}
\phi(1,x)=\de \vp_0(x), \quad \p_t\phi(1,x)=\de \vp_1(x),
\end{align}
where $(\vp_0(x),\vp_1(x))\in C_0^{\infty}(\mathbb R^3)$, $\de>0$ is sufficiently small. Then
\eqref{main equation}-\eqref{Y-1} has a global smooth solution $\phi$ (see \cite{K-P} or Chapter 6 of
\cite{H}).
On the other hand, if $p=1$ in \eqref{main equation}, then the smooth solution $\phi$
of \eqref{main equation}-\eqref{Y-1} will blow up in finite time as long as $(\vp_0(x),\vp_1(x))\not\equiv 0$
(see \cite{A1}, \cite{A2} and \cite{Ding2}).

For $p=2$, let \eqref{main equation} equip with  the short pulse initial data
 \begin{align}\label{Y-2}
  (\phi,\p_t\phi)(1,x)=\big(\de^{\f32}\phi_0(\frac{r-1}{\de},\om),
   \de^{\f12}\phi_1(\frac{r-1}{\de},\om)\big),
   \end{align}
where $r=|x|=\sqrt{(x^1)^2+(x^2)^2+(x^3)^2}$, $\om=(\om_1, \om_2,\om_3)=\frac{x}{r}\in\mathbb S^2$, $\phi_0(s,\om)$
and $\phi_1(s,\om)\in C_0^{\infty}\left((-1,0)\times\mathbb{S}^2\right)$, moreover, such the incoming constraint condition
is posed
 \begin{equation}\label{HC-1}
  (\p_t-\p_r)^k\phi(1,x)=O(\de^{\f32}),\ k=1,2.
 \end{equation}
Then under the suitable assumption of $(\phi_0,\phi_1)$, it is shown in \cite{MY17} that the smooth solution $\phi$ of equation
$-\big(1+(\p_t\phi)^2\big)\p_t^2\phi+\De\phi=0$ will blow up
and further the incoming shock will be formed before the time $t=2$. Here we point out that
the ``short pulse initial data'' (a class of large initial data) are firstly introduced
by D. Christodoulou in
\cite{C09}. For the short pulse data and by the short pulse method, the authors in
monumental papers \cite{C09} and \cite{K-R} showed that the black holes can be formed
in vacuum spacetime and the blowup mechanism is due to the condensation of the gravitational waves for the 3D Einstein general
relativity equations.

Motivated by \cite{MY17},
we now consider \eqref{main equation} with the general short pulse initial data
 \begin{align}\label{initial data}
  (\phi,\p_t\phi)(1,x)=\big(\de^{2-\ve0}\phi_0(\frac{r-1}{\de},\om),
    \de^{1-\ve0}\phi_1(\frac{r-1}{\de},\om)\big),
 \end{align}
where $0<\ve0<1$, and $(\phi_0(s,\om),\ \phi_1(s,\om))\in C_0^{\infty}\left((-1,0)\times\mathbb{S}^2\right)$.
Meanwhile, the following outgoing constraint condition is posed
 \begin{equation}\label{condition on data}
  (\p_t+\p_r)^k\phi(1,x)=O(\de^{2-\ve0}),\ k=1,2.
 \end{equation}

Note that in order to guarantee the strict hyperbolicity of \eqref{main equation},
the smallness of $\p\phi$ should be needed (in particular, when $p$ is odd). However, by the following expression of
solution $v$ to the 3D linear wave equation
$\square v=0$ with $(v, \p_tv)(0,x)=(v_0, v_1)(x)$,
 \begin{equation}\label{YHC-1}
  \begin{split}
   v(t,x)&=\f{1}{4\pi}\f{\p}{\p t}\bigg(t\int_{\Bbb S^2}v_0(x+t\omega)\d\omega\bigg)+
   \f{t}{4\pi}\int_{\Bbb S^2}v_1(x+t\omega)\d\omega\\
         &=\f{1}{4\pi}\int_{\Bbb S^2}v_0(x+t\omega)\d\omega+\f{t^2}{4\pi}\int_{\Bbb S^2}\nabla_xv_0(x+t\omega)\cdot\omega \d\omega
         +\f{t}{4\pi}\int_{\Bbb S^2}v_1(x+t\omega)\d\omega,\\
  \end{split}
 \end{equation}
when $\nabla^2v_0$ or $\nabla v_1$ are large, then $\p v$ is generally large. Therefore, only in terms of the
short pulse initial data \eqref{initial data}, it is not enough to keep the smallness of solution $\p\phi$ to
\eqref{main equation} when $\delta>0$ is small. This means that such an outgoing constraint condition  \eqref{condition on data} is required
in order to guarantee the well-posedness of \eqref{main equation} with \eqref{initial data}.
On the other hand, the condition \eqref{condition on data} actually implies the better smallness of  $\phi$ along the
outgoing directional derivative
$\p_t+\p_r$ up to  orders 2 on the initial time $t=1$. It is pointed out that when \eqref{initial data} is given,
$\p^{\be}\phi(1,x)=O(\de^{2-\ve0-|\be|})$ and further $(\p_t+\p_r)^k\phi(1,x)=O(\de^{2-\ve0-k})$ hold from \eqref{main equation},
which means the over-determination of \eqref{condition on data}
for arbitrary choice of $(\phi_0,\phi_1)$ in \eqref{initial data}. Thus the choice of $(\phi_0,\phi_1)$
will be somewhat restricted (see Appendix below).

The main result in the paper is
\begin{theorem}[]\label{main theorem}
Under the condition \eqref{condition on data}, when $p>p_c=\f{1}{1-\ve0}$,
for small $\de>0$, the equation \eqref{main equation} with \eqref{initial data} admits a
global smooth solution
 \begin{equation*}
  \phi\in C^{\infty}\left([1,+\infty)\times\mathbb{R}^3\right)\ \text{with}\ |\p\phi|\leq C\de^{1-\ve0}t^{-1},
 \end{equation*}
where $C>0$ is a uniform constant independent of $\de$ and $\ve0$.
\end{theorem}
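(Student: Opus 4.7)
The plan is to reformulate \eqref{main equation} as a geometric quasilinear wave equation $\square_g\phi=0$ for a Lorentzian acoustical metric $g$ built from the coefficient $1+(\p_t\phi)^p$, and then run a long-time continuity argument using a null foliation adapted to the outgoing characteristics, in the spirit of Christodoulou \cite{C09} and Miao--Yu \cite{MY17}. The first step is to construct the outgoing optical function $u$ by solving the eikonal equation $g^{\alpha\beta}\p_\alpha u\,\p_\beta u=0$ with $u|_{t=1}=1-r$, so that its level sets $C_u$ are the outgoing null cones carrying the short-pulse annulus outward; together with an angular transport this produces acoustical coordinates $(t,u,\vartheta)$ and an associated null frame $(L,\Lb,e_A)$. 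Attached to this geometry are the connection coefficients $(\chih,\tr\chi,\chibh,\tr\chib,\ze,\etb,\ldots)$ and the inverse foliation density $\mu=-\big(g^{\alpha\beta}\p_\alpha t\,\p_\beta u\big)^{-1}$, whose positivity is equivalent to the outgoing cones remaining non-focusing. Global existence will follow once $\mu$ is shown to stay uniformly bounded away from zero.

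I would then set up a bootstrap on a maximal existence interval $[1,t^*)$ with principal assumptions
\begin{equation*}
|\p\phi|\leq M\de^{1-\ve0}t^{-1},\qquad \tfrac12\leq\mu\leq 2,
\end{equation*}
together with $(\de,t)$-weighted $L^\infty$ and $L^2$ bounds on higher derivatives of $\phi$ and on the connection coefficients, following the standard short-pulse $\de$-bookkeeping: the tangential fields $Z\in\{L,\Omega_{ij},S=t\p_t+r\p_r\}$ carry cost $\de^0$, whereas the transverse field $T$ along $\Lb$ costs $\de^{-1}$. The wave equation is commuted with strings of such $Z$'s, and weighted multiplier estimates along incoming and outgoing null hypersurfaces are carried out with a Christodoulou-type multiplier $K=t^2L+\Lb$, tracking the $\de$-weight contributed by each source term; Klainerman--Sobolev on the spheres $S_{t,u}$ then converts the $L^2$-control back to the desired pointwise bounds.

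The crux of the proof is the transport equation for $\mu$, which schematically reads
\begin{equation*}
L\mu=\tfrac{p}{2}(\p_t\phi)^{p-1}\,T\p_t\phi+\mu\cdot e(\p\phi),
\end{equation*}
with $e$ a smooth coefficient of $\p\phi$. Under the bootstrap one has $|(\p_t\phi)^{p-1}|\lesssim\de^{(p-1)(1-\ve0)}t^{-(p-1)}$, while on the pulse support the transverse derivative obeys $|T\p_t\phi|\lesssim\de^{-\ve0}t^{-1}$ (combining the $\de^{-1}$ cost of $T$ with the $\de^{1-\ve0}t^{-1}$ size of $\p\phi$), so that
\begin{equation*}
|\mu-1|\lesssim\de^{(p-1)(1-\ve0)-\ve0}\int_1^{t^*}t^{-p}\,\d t\;\lesssim\;\de^{\,p(1-\ve0)-1},
\end{equation*}
since $p\ge 2$ renders the $t$-integral finite. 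The exponent $p(1-\ve0)-1$ is strictly positive precisely when $p>p_c=1/(1-\ve0)$, so for such $p$ and $\de$ small enough one has $|\mu-1|\ll 1$, sharply improving the $\mu$-bootstrap and ruling out outgoing shock formation; the same $\de$-gain then feeds back into improved pointwise bounds on $\p\phi$.

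The main technical obstacle I anticipate is the top-order energy estimate. After $N\gg 1$ commutations, the deformation tensors of $L,S,\Omega_{ij}$ bring in top-order derivatives of $\chi,\ze,\mu$ that can only be controlled through a hierarchy of Hodge and elliptic estimates on each sphere $S_{t,u}$, with the well-known danger of a derivative loss. I expect this to be handled by the now-standard device of modified quantities together with a multiplier modification \`a la Christodoulou--Speck, the point being that the source of every higher-order $\mu$- and $\chi$-equation still carries a factor $(\p_t\phi)^{p-1}$, so the critical $\de$-gain $\de^{p(1-\ve0)-1}$ propagates through all orders of the hierarchy. Once the bootstrap is improved, a standard continuation criterion upgrades $t^*=+\infty$, yielding the global smooth solution with the claimed decay $|\p\phi|\le C\de^{1-\ve0}t^{-1}$.
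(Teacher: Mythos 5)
Your outline reproduces the paper's strategy in the exterior region: the acoustical metric, the eikonal/optical function, the inverse foliation density $\mu$, the short-pulse $\de$-bookkeeping with $T$ costing $\de^{-1}$, the transport equation $L\mu=O(\de^{(1-\varepsilon_0)p-1}t^{-p})$ whose time-integral closes with a positive power of $\de$ exactly when $p>p_c=\frac{1}{1-\varepsilon_0}$, and the top-order $\chi$, $\mu$ hierarchy handled by modified quantities and elliptic estimates on $S_{\mathfrak t,u}$. All of this matches Sections 2--8.1 of the paper (with minor cosmetic differences: the paper uses the two multipliers $\rho^{2s}L$ with $0<s<\frac12$ and $\Lb$ rather than a single $t^2L+\Lb$, and starts the eikonal equation at $t_0=1+2\de$ after a local-existence step).

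There is, however, a genuine gap: the null foliation and the $\mu$-based continuation argument only cover the thin annular region $0\le u\le 4\de$ near the outermost outgoing cone $C_0$, because the short-pulse data are supported in $\{1-\de<r<1\}$ and the acoustical coordinates are only set up there. Your proposal never explains how global existence is obtained in the interior cone $B_{2\de}=\{t-r\ge 2\de\}$, which is most of spacetime and where the solution is no longer organized by the outgoing foliation. The paper treats this as a separate global Goursat problem (Section 8.2), with boundary data on the characteristic surface $\tilde C_{2\de}=\{t-r=2\de\}$ and a Klainerman--Sobolev/energy argument using the flat vector fields $S$, $H_i$, $\Omega$. The crucial input there is the improved estimate $|\Gamma^{\al}\phi|\lesssim\de^{2-\varepsilon_0}t^{-1}$ on $\tilde C_{2\de}$ for all $|\al|\le 2N-8$ --- a gain of the \emph{full} factor $\de^{2-\varepsilon_0}$ with no loss $\de^{-|\al|}$ --- obtained by integrating the good derivative $\tilde{\underline L}\Gamma^{\al}\phi$ inward from the vacuum region outside $C_0$. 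Without establishing this smallness on $\tilde C_{2\de}$ and then running the interior energy bootstrap \eqref{global BA}, the argument proves global existence only in a $\de$-neighborhood of the light cone, not the claimed $\phi\in C^{\infty}([1,+\infty)\times\mathbb{R}^3)$; you should add this second stage to complete the proof.
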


\begin{remark}
In our subsequent companion paper \cite{Lu1}, when $1\le p\le p_c$,  we will show that the
smooth solution $\phi$ of \eqref{main equation} with \eqref{initial data} will blow up
and further form the outgoing shock before the  time $t=2$ under the following assumption of $(\phi_0,\phi_1)$:
there exists a point $(s_0,\om_0)\in (-1,0)\times\Bbb S^2$ such that
$$\phi_1^{p-1}(s_0, \om_0)\p_s\phi_1(s_0, \om_0)>\frac{2}{p}\quad\text{for $1\le p<p_c$}$$
or
$$
\phi_1^{p-1}(s_0, \om_0)\p_s\phi_1(s_0, \om_0)>\frac{(p-1)2^p}{p(2^{p-1}-1)}\quad\text{for $p=p_c$}.
$$
\end{remark}

\begin{remark}
Due to the special forms  of \eqref{initial data} and equation \eqref{main equation}, and
the restrictions of $k=1,2$, then \eqref{condition on data} is completely  equivalent to
 \begin{equation}\label{YHC-00}
  (\p_t+c\p_r)^k\Om^{\al}\p^{\be}\phi(1,x)=O(\de^{2-\ve0-|\be|}),\ 0\leq k\leq 2,
 \end{equation}
where $\Om\in\{x^i\p_j-x^j\p_i:1\leq i<j\leq 3\}$ stands for the derivatives on $\Bbb S^2$ and $c=\big(1+(\p_t\phi)^p\big)^{-\f12}$ is the wave speed.
\end{remark}

\begin{remark}
In \cite{MPY}, the authors study the Cauchy problem of the following 3-D semilinear wave equation systems with the short pulse initial data
\begin{equation}\label{MPYc}
  \left\{
   \begin{aligned}
&\Box\vp^I=\dse\sum_{0\le\alpha,\beta\le 3; 1\le J,K\le N}A_{JK}^{\alpha\beta,I}\p_{\alpha}\vp^J
\p_{\beta}\vp^K, \quad I=1, ..., N,\\
&(\vp^I, \p_t\vp^I)(1,x)=(\de^{\f12}\vp_0^I(\f{r-1}{\de},\om), \de^{-\f12}\vp_1^I(\f{r-1}{\de},\om)),
 \end{aligned}
  \right.
 \end{equation}
where $A_{JK}^{\alpha\beta,I}$ are constants, $\p_0=\p_t$, $(\vp_0^I(s,\om),
\vp_1^I(s,\om))\in C_0^{\infty}\left((-1,0)\times\mathbb{S}^2\right)$,
the quadratic nonlinear forms satisfy the null conditions
$\dse\sum_{0\le\alpha,\beta\le 3}A_{JK}^{\alpha\beta,I}\xi_{\alpha}
\xi_{\beta}\equiv0$ for $\xi_0=-1$, any $(\xi_1,\xi_2,\xi_3)\in\mathbb S^2$
and $1\le I, J, K\le N$.
Moreover,  it is assumed that
\begin{equation}\label{Y-3}
	|(\p_t+\p_r)^k\Omega^{\al}\p^q\vp^I(1,x)|\le C_{k\al q}^I \delta^{1/2-|q|},\quad k\leq N_0,
	\end{equation}
where $C_{k\al q}^I$ are constants, and
$N_0\ge 40$ is a sufficiently large integer. Then the global existence of smooth solution $\vp=(\vp^1, ..., \vp^N)$
to \eqref{MPYc} is  established in \cite{MPY}. Due to the largeness
of the integer $N_0$ in \eqref{Y-3}, it follows from the proof of \cite{MPY} that \eqref{MPYc} essentially becomes
the small value solution problem inside the cone $\{r\le t-\delta\}$.
Recently,
by the similar idea of \cite{MPY}, the authors in \cite{Wang} proved the global existence of the relativistic membrane equation
\begin{equation}\label{HCC-1}
\p_t(\displaystyle\frac{\p_t\vp}{\sqrt{1-(\p_t\vp)^{2}+|\nabla\vp|^2}})
-\displaystyle\sum_{i=1}^n\partial_i(\frac{\p_i \vp}{\sqrt{1-(\p_t\vp)^2+|\nabla\vp|^2}})
=0\quad (n=2,3)
\end{equation}
with the short pulse initial data $(\de^{\f32}\vp_0(\f{r-1}{\de},\om), \de^{\f12}\vp_1(\f{r-1}{\de},\om))$
and the constraint condition $(\p_t+\p_r)^k\Omega^{\al}$ $\p^q\vp(1,x)=O(\delta^{3/2-|q|})$ for $k\le N_0$
($N_0$ is large enough).
By the largeness of  $N_0$  in \eqref{Y-3} and the methods
for treating the semilinear wave equation (one can choose $u=t-r$ as the optical function),
the authors in  \cite{MPY} and \cite{Wang} can show the global
existence of smooth solutions by the energy method.
In this paper, we need to treat the large value of $\phi$ in the whole time-space $\Bbb R_+\times\Bbb R^3$
because of $k=1,2$ in \eqref{condition on data},
and the methods in \cite{MPY} or \cite{Wang} seem  not to be available for us.
\end{remark}

\begin{remark}
For $k=3$ or larger number $k$, \eqref{condition on data} with the power
$\delta^{2-\ve0}$ is seriously over-determined
and difficult to be realized for the choice of $(\phi_0,\phi_1)$.
In Appendix, we will chose $(\phi_0,\phi_1)$ such that
\eqref{condition on data} holds for $k=1,2$.
\end{remark}

We now comment on the proof of Theorem \ref{main theorem}.
For the general 4D quasilinear wave equation
$\displaystyle\sum_{\al,\beta=0}^4g^{\al\beta}(\phi,\p\phi)\p_{\al\beta}^2\phi=0$ and the 2D
quasilinear wave  equation
$\displaystyle\sum_{\al,\beta=0}^2g^{\al\beta}(\p\phi)\p_{\al\beta}^2\phi=0$ with
the short pulse initial data $(\delta^{2-\varepsilon_0}\phi_0(\f{r-1}{\delta},\omega),
\delta^{1-\varepsilon_0}\phi_1(\f{r-1}{\delta},\omega))$, under the corresponding null conditions
the authors in \cite{Ding4}-\cite{Ding3}
have established the global existence of the smooth solutions $\phi$ for  the suitable scope of $\ve0$ with $0<\ve0<\varepsilon^*$ and  $\varepsilon^*<1$ under the assumptions like \eqref{condition on data}. As in \cite{Ding4}-\cite{Ding3},
strongly motivated by the geometric methods of D. Christodoulou \cite{Ch07},
we will construct the solution $\phi$ of \eqref{main equation} near the outermost outgoing conic surface $C_0=\{(t, x):t\geq 1+2\delta,t=r\}$.
Introduce the inverse foliation density $\mu=-\displaystyle\frac{1}{(1+(\p_t\phi)^p)\p_tu}$,
where the  optical function $u$ satisfies
$-\big(1+(\p_t\phi)^p\big)(\p_tu)^2+\displaystyle\sum_{i=1}^3(\p_iu)^2=0$ with the initial data $u(1+2\de,x)=1+2\de-r$.
Under the bootstrap assumptions on $\p\phi$ with the suitable time-decay rates and the precise
smallness powers of $\de$, then $\mu$ satisfies the equation  $L\mu=O(\de^{(1-\ve0)p-1}t^{-p})$,
where $L$ is a vectorfield approximating $\p_t+\p_r$.
By $\mu(1+2\de,x)\sim 1$ and integration along the integral curves of $L,\ \mu\sim 1$ is derived for small $\de>0$.
The positivity of $\mu$ tells us that the outgoing characteristic conic surfaces never intersect as long as the smooth solution $\phi$
with suitable time-decay rate exists. From this, the global weighted energy estimates of $\phi$ near $C_0$ can be derived
and further the  bootstrap assumptions are closed.
In addition, we can establish that the outgoing characteristic conic surface of equation \eqref{main equation}
starting from the domain $\{t=1+2\de,1-2\de\leq r\leq 1+\de\}$
are almost straight and further contain the surface $\Ct_{2\de}=\{(t,x):t\geq 1+2\de,t-r=2\de\}$.
On the other hand, the crucial estimate  $|\p^\al\phi|\lesssim\de^{2-\ve0}t^{-1}$
on $\Ct_{2\de}$ is obtained, which contains the higher order smallness factor $\de^{2-\ve0}$ rather than $\de^{2-\ve0-|\al|}$.
Based on such ``good" smallness of $\phi$ on $\Ct_{2\de}$, we can solve the
global Goursat problem of \eqref{main equation} in the conic domain $B_{2\de}=\{(t,x):t\geq 1+2\de,t-r\geq 2\de\}$.
Therefore, the proof of Theorem \ref{main theorem} is completed. It is emphasized that
compared with  \cite{Ding4}-\cite{Ding3}, except the differences of space dimensions and time-decay rate
of solution $\phi$, one of our main ingredients
is to obtain the optimal power estimates of $\delta$ for all the related quantities
so that the critical exponent $p_c$ of equation \eqref{main equation} can be determined.

Our paper is organized as follows.
In Section \ref{2},
at first, we state the local existence of the solution $\phi$
to equation \eqref{main equation} with \eqref{initial data}-\eqref{condition on data}  for $1\leq t\leq 1+2\delta$
and some key estimates of $\phi(1+2\delta, x)$.
In Subsection \ref{Section 2},
we give the preliminary knowledge on the Lorentzian geometry, especially, recall the
definitions of optical function, inverse foliation density $\mu$, deformation tensor, null frame and some norms of
smooth functions. In addition, the equation of $\mu$ is derived, and some basic calculations are given for the covariant
derivatives of the null frame and for the deformation tensors.
In Section \ref{Section 3}, the basic bootstrap assumptions near $C_0$
are given, meanwhile, we also give some estimates on several quantities which will be extensively used in subsequent sections.
In Section \ref{Section 4}, the global energy estimates for the linearized covariant wave equation $\mu\Box_g\Psi=\Phi$
are derived and some higher order weighted energies and fluxes are defined.
In Section \ref{Section 5}, under the bootstrap assumptions, the higher order $L^{\infty}$ and
$L^2$ estimates of $\p\phi$ near $C_0$ are established.
In Section \ref{Section 6}, the top order $L^2$ estimates for the derivatives of $\chi$ and $\mu$ are established, where $\chi$ is
the second fundamental form of the related  metric $g$.
In Section \ref{Section 7}, we first derive the commuted wave equation and then treat the estimates for the
resulting error terms.
In Section \ref{Section 8}, based on all the estimates in the previous sections, the bootstrap arguments are closed
and further the global existence of solution $\phi$ to equation \eqref{main equation} near $C_0$ is established.
On the other hand, the global existence of solution $\phi$ in $B_{2\de}$ is obtained
and then Theorem \ref{main theorem} is shown.
In Appendix, we prove the existence of $(\phi_0,\phi_1)$ such that the condition \eqref{condition on data}
is satisfied.
\vskip 0.2 true cm
Throughout the whole paper, without special mentions, the following notations are used:
\begin{itemize}
  \item Greek letters $\{\al,\be,\ga,\cdots\}$ corresponding to the spacetime coordinates are chosen in \{0,1,2,3\}; Latin letters $\{i,j,k,\cdots\}$ corresponding to the spatial coordinates are chosen in \{1,2,3\}; Capital letters $\{A,B,C,\cdots\}$ corresponding to the sphere coordinates are chosen in \{1,2\}.
  \item We use the Einstein summation convention to sum over the repeated upper and lower indices.
  \item The convention $f\lesssim h$ means that there exists a generic positive constant $C$ independent of the parameter $\de>0$ and the variables $(t,x)$ such that $f\leq Ch$.
  \item If $\xi$ is a $(0,2)$-type spacetime tensor, $\La$ is a one-form, $U$ and $V$ are vectorfields, then
  the contraction of $\xi$ with respect to $U$ and $V$ is defined as $\xi_{UV}=\xi_{\al\be}U^{\al}V^{\be}$,
  and the contraction of $\La$ with respect to $U$ is defined as $\La_{U}=\La_{\al}U^{\al}$.
  \item The restriction of quantity $\zeta$ (including the metric $g$, $(m,n)$-type
  spacetime tensor field) on the sphere is represented by $\slashed{\zeta}$. But if $\zeta$ is already defined on the sphere, it is still represented by $\zeta$.
  \item $\mathcal L_V\xi$ stands for the Lie derivative of $\xi$ with respect to vector $V$,
and $\slashed{\mathcal L}_V\xi$ is the restriction of  $\mathcal L_V\xi$
on the sphere.
\end{itemize}
Finally, such notations are introduced:
\begin{align*}
&C_0=\{(t,x):t\geq 1+2\delta,t=r\},\\
&B_{2\delta}=\{(t,x):t\geq 1+\delta,t-r\geq 2\delta\},\\
&\tilde C_{2\delta}=\{(t,x):t\geq 1+2\delta,t-r=2\delta\},\\
&c=\big(1+(\p_t\phi)^p\big)^{-\frac{1}{2}},\\
&t_0=1+2\de,\\
&\t L=\p_t+\p_r,\\
&\t {\underline L}=\p_t-\p_r,\\
&\Om_i={\epsilon_{ij}}^kx^j\p_k,\\
&\Om\in \{\Om_{i}: 1\le i\le 3\},\\
&S=t\p_t+r\p_r=\f{t-r}{2}\t {\underline L}+\f{t+r}{2}\t L,\\
&H_i=t\p_i+x^i\p_t=\om^i\big(\f{r-t}{2}\t{\underline L}+\f{t+r}{2}\t L\big)+\f{t\om^j}{r}{\epsilon_{ji}}^k\Om_k,\\
&\Sigma_t=\{(t',x):t'=t,x\in\Bbb R^3\},
\end{align*}
where ${\epsilon_{ij}}^k=\epsilon_{ijk}=-1$ when $ijk$ is $123$'s odd permutation, and ${\epsilon_{ij}}^k=\epsilon_{ijk}=1$
when $ijk$ is $123$'s even permutation.

\vskip 0.2 true cm

\section{\bf Some preliminaries}\label{2}
\vskip 0.2 true cm

\subsection{Local existence}\label{Section 1}
In this subsection,  for the equation \eqref{main equation} with \eqref{initial data}-\eqref{condition on data},
we list the local existence of the smooth solution $\phi$ and some crucial properties
for $1\leq t\leq t_0$. Since the proof is rather analogous to that of Theorem 3.1 in \cite{Ding4}
(for the general 4D quasilinear wave equation with the first null condition) or
that of Theorem 2.1 in  \cite{Ding3} (for the 2D quasilinear wave equations with the first and second null conditions),
we omit the details here.

\begin{theorem}\label{Theorem local existence}
	For sufficiently small $\de>0$,
	the equation \eqref{main equation} with \eqref{initial data}-\eqref{condition on data} admits a local smooth solution $\phi\in C^{\infty}\big([1,t_0]\times\mathbb{R}^3\big)$. Moreover, for $q\in\mathbb{N}_0^4$, $\kappa\in\mathbb{N}_0^3$, $k\in\mathbb{N}_0$ and $l\in\mathbb{N}_0$, it holds that
	\begin{itemize}
		\item[(1)] \begin{align*}
		|\t L^k\p^{q}\Om^{\kappa}\phi(t_0,x)|&\lesssim\de^{2-|q|-\ve0},\ r\in[1-2\de,1+2\de],\\
		|\t\Lb^l\p^{q}\Om^{\kappa}\phi(t_0,x)|&\lesssim\de^{2-|q|-\ve0},\ r\in[1-3\de,1+\de].
		\end{align*}
		\item[(2)] \begin{equation*}
		|\p^{q}\Om^{\kappa}\phi(t_0,x)|\lesssim
		\left\{
		\begin{aligned}
		\de^{2-\ve0}&,\ as\ |q|\leq 2,\\
		\de^{4-|q|-\ve0}&,\ as\ |q|> 2,
		\end{aligned}
		\right.
		\ r\in[1-3\de,1+\de].
		\end{equation*}
		\item[(3)] \begin{equation*}
		|\t L^k\t \Lb^l\Om^{\kappa}\phi(t_0,x)|\lesssim\de^{2-\ve0},\ r\in[1-2\de,1+\de].
		\end{equation*}
	\end{itemize}
\end{theorem}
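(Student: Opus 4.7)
The plan is to reduce the problem to a standard quasilinear local existence on a bounded time interval by exploiting the short-pulse scaling, and then to read off the refined $\delta$-weighted estimates at $t=t_0$ by tracking derivatives through the rescaling together with the outgoing constraint.

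First I would introduce rescaled variables $\tau=(t-1)/\de$ and $s=(r-1)/\de$, and the rescaled unknown $\psi(\tau,s,\om)=\de^{-(2-\ve0)}\phi(1+\de\tau,\,(1+\de s)\om)$. The initial data \eqref{initial data} then become $\psi|_{\tau=0}=\phi_0(s,\om)$ and $\p_\tau\psi|_{\tau=0}=\phi_1(s,\om)$, which are $C_0^\infty$ with all derivatives of size $O(1)$ independent of $\de$. A direct computation shows that $(\p_t\phi)^p=\de^{(1-\ve0)p}(\p_\tau\psi)^p$ and $\De\phi=\de^{-\ve0}\p_s^2\psi+\de^{2-\ve0}(\text{angular terms})$, so \eqref{main equation} becomes, after dividing by $\de^{-\ve0}$,
\begin{equation*}
-\bigl(1+\de^{(1-\ve0)p}(\p_\tau\psi)^p\bigr)\p_\tau^2\psi+\p_s^2\psi+\de^2\,\slashed{\De}_\om\psi+\de(\text{l.o.t.})=0,
\end{equation*}
which is a $O(\de)$-perturbation of the regular $1{+}1$ dimensional wave equation $-\p_\tau^2\psi+\p_s^2\psi=0$ with smooth compactly supported data on the bounded strip $\tau\in[0,2]$. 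Standard energy methods for quasilinear strictly hyperbolic equations then give a smooth solution $\psi\in C^\infty([0,2]\times\R\times\Bbb S^2)$, whose $C^k$ norms are $O(1)$ for every $k$; unpacking the rescaling yields $\phi\in C^\infty([1,t_0]\times\R^3)$.

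Next I would derive the derivative bounds. Naively, each $\p$ on $\phi$ costs $\de^{-1}$ through the rescaling, which immediately gives the estimates (1) and (3) once one rewrites $\t L=\de^{-1}(\p_\tau+\p_s)$ and $\t\Lb=\de^{-1}(\p_\tau-\p_s)$ and uses that the outgoing data \eqref{condition on data} translate to $(\p_\tau+\p_s)^k\psi|_{\tau=0}=O(\de)$ for $k=1,2$; integrating this smallness along $\t L$ on the strip $\tau\in[0,2]$ (whose length in $t$ is $2\de$) preserves the $\de^{2-\ve0}$ size of $\t L^k\phi$ up to $t_0$. For (2) the key observation is that on the region $r\in[1-3\de,1+\de]$ only the incoming part of the wave and finitely many of its derivatives survive: for $|q|\le 2$, one writes $\p^q=\sum c_{q,\kappa,k,l}\t L^k\t\Lb^l\Om^\kappa$ and uses (1) to absorb every term with at least one $\t L$, while the pure $\t\Lb^l$-terms with $l\le 2$ are of size $\de^{2-\ve0}$ directly from the pulse profile and the short elapsed time; for $|q|>2$, one uses \eqref{main equation} to exchange $\t\Lb^2$ (equivalently $\p_t^2$) for $\Des\phi$ plus nonlinear terms, which trades two $\t\Lb$'s for lower-order combinations and accounts for the gain of $\de^2$ in $\de^{4-|q|-\ve0}$.

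The main obstacle, as in \cite{Ding4,Ding3}, is not the existence itself but the sharp tracking of $\de$-powers for arbitrarily many derivatives in estimate (2), which requires repeated use of the equation to convert double $\t\Lb$-derivatives into angular and transversal quantities whose sizes are dictated by the outgoing constraint \eqref{condition on data} (and its commuted analogue \eqref{YHC-00}). Once this structural trade-off is set up inductively on $|q|$, the bounds follow from the rescaled $C^k$-control on $\psi$ together with the improved $\t L$-estimates from (1). Since the mechanism is essentially identical to Theorem 3.1 of \cite{Ding4} and Theorem 2.1 of \cite{Ding3}, I would at this point cite those works and omit the detailed induction, as the authors also choose to do.
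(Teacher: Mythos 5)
Your overall strategy — short-pulse rescaling, standard quasilinear local existence on the rescaled strip $\tau\in[0,2]$, then tracking powers of $\de$ by commuting with the equation and the outgoing constraint — is the right one and is consistent with the references to which the paper (which gives no proof of this theorem) defers. But the sketch contains two concrete errors. First, the rescaled constraint is $(\p_\tau+\p_s)^k\psi|_{\tau=0}=O(\de^{k})$, not $O(\de)$: since $(\p_t+\p_r)^k\phi=\de^{2-\ve0-k}(\p_\tau+\p_s)^k\psi$, condition \eqref{condition on data} forces a gain of one power of $\de$ per outgoing derivative, and with only $O(\de)$ at $k=2$ your argument yields $|\t L^2\phi|\lesssim\de^{1-\ve0}$, short of the claimed $\de^{2-\ve0}$. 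Second, the pure $\t\Lb^{l}$ terms with $l\le 2$ are \emph{not} of size $\de^{2-\ve0}$ ``directly from the pulse profile'': at $t=1$ one has $\t\Lb\phi=\de^{1-\ve0}(\phi_1-\p_s\phi_0)=O(\de^{1-\ve0})$ and $\t\Lb^2\phi=O(\de^{-\ve0})$. The bound in part (2) for $|q|\le 2$ therefore represents a gain of up to $\de^{2}$ between $t=1$ and $t=t_0$, valid only on $r\in[1-3\de,1+\de]$, i.e.\ strictly behind the bulk of the (almost purely outgoing) pulse; it must be obtained by a propagation/support argument — integrating transport equations for $\t L(r\phi)$ and $\t\Lb(r\phi)$ from the edge of the support, using that the outgoing constraint makes the incoming component small — and cannot be read off the initial profiles.

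The more serious gap concerns arbitrarily many good derivatives: parts (1) and (3) assert $|\t L^k\Om^{\ka}\phi(t_0,\cdot)|\lesssim\de^{2-\ve0}$ and $|\t L^k\t\Lb^l\Om^{\ka}\phi(t_0,\cdot)|\lesssim\de^{2-\ve0}$ for \emph{all} $k,l\in\mathbb{N}_0$, whereas \eqref{condition on data} controls only $k\le 2$ (and the paper emphasizes that imposing it for $k\ge 3$ would be over-determined). ``Integrating the smallness along $\t L$'' therefore proves nothing for $k\ge 3$. The missing ingredient is the inductive use of the null decomposition of the equation, $\t L\t\Lb\phi=\frac{2}{r}\p_r\phi+\frac{1}{r^2}\Des\phi-(\p_t\phi)^p\p_t^2\phi$, to convert every mixed pair $\t L\t\Lb$ into angular terms, lower-order transversal terms, and nonlinear terms that are harmless for small $\de$; this, combined with the $k\le 2$ constraint and the restriction to the stated $r$-ranges, is what propagates the $\de^{2-\ve0}$ smallness to all orders of $\t L$ and $\t\Lb$. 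You invoke this trade-off only for part (2), but it is equally indispensable for (1) and (3), and without it the induction you defer to the cited works does not close.
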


\subsection{The Lorentzian geometry and some related definitions}\label{Section 2}

In this subsection, we give some preliminaries on the related Lorentzian geometry and definitions, which will be
utilized as the basic tools later on.

\vskip 0.2 true cm

{\bf 2.2.1. Metric and Christoffel symbols}
\vskip 0.1 true cm

By the form of \eqref{main equation}, it is natural to introduce the following inverse spacetime metric
 \begin{equation}\label{g inverse}
  g^{-1}=(g^{\al\be})=\mathrm{diag}(-\frac{1}{c^2},1,1,1)
 \end{equation}
and the corresponding spacetime metric
 \begin{equation}\label{g}
  g=(g_{\al\be})=\mathrm{diag}(-c^2,1,1,1).
 \end{equation}
In this case, \eqref{main equation} can be rewritten as
 \begin{equation}\label{wave equation}
  -\frac{1}{c^2}\p_t^2\phi+\De\phi=0
 \end{equation}
or
 \begin{equation}\label{quasilinear wave equation}
  g^{\al\be}\p_{\al\be}^2\phi=0.
 \end{equation}

In the Cartesian coordinates, the Christoffel symbols of $g$ are defined by
 \begin{equation}\label{Definition 1st Christoffel symbol}
  \Ga_{\al\be\ga}=\frac{1}{2}(\p_{\al}g_{\be\ga}+\p_{\ga}g_{\al\be}-\p_{\be}g_{\al\ga})
 \end{equation}
and
 \begin{equation}\label{Definition 2nd Christoffel symbol}
  \Ga_{\al\be}^{\ga}=g^{\ga\la}\Ga_{\al\la\be}.
 \end{equation}
Meanwhile, set
 \begin{equation}\label{Definition 3rd Christoffel symbol}
  \Ga^{\ga}=g^{\al\be}\Ga_{\al\be}^{\ga}.
 \end{equation}

\begin{definition}\label{Definition G function}
Define $G$ function as
 \begin{equation*}
  G_{\al\be}^{\ga}=\frac{\p g_{\al\be}}{\p\vp_{\ga}},
 \end{equation*}
where and below $\vp_{\ga}=\p_{\ga}\phi$.
\end{definition}
Due to $g_{00}=-c^2=-(1+\vp_0^p)^{-1}$, then the only non-vanishing component of the $G$ function is
 \begin{equation*}
  G_{00}^0=pc^4\vp_0^{p-1}.
 \end{equation*}

\vskip 0.2 true cm

{\bf 2.2.2. Optical function, inverse foliation density and null frames}
\vskip 0.2 true cm

As in \cite{Ch07}, one can introduce the following \textit{optical function}.
\begin{definition}\label{Definition optical function}
A $C^1$ function $u(t,x)$ is called the optical function of problem \eqref{quasilinear wave equation} if $u(t,x)$ satisfies the eikonal equation
 \begin{equation}\label{eikonal equation}
  g^{\al\be}\p_{\al}u\p_{\be}u=0.
 \end{equation}
\end{definition}
In the paper, we will choose the initial data $u(t_0,x)=u(1+2\de,x)=1+2\de-r$ and pose the condition $\p_tu>0$ for \eqref{eikonal equation}. According to the definition of optical function, define the \textit{inverse foliation density} $\mu$ as in \cite{Ch07},
 \begin{equation}\label{Definition inverse foliation density}
  \mu=-\frac{1}{g^{\al\be}\p_{\al}u\p_{\be}t}\big(=\frac{1}{c^{-2}\p_tu}\big).
 \end{equation}
By \eqref{eikonal equation} and \eqref{g inverse}, then $-c^{-2}(\p_tu)^2+\displaystyle\sum_{i=1}^3(\p_iu)^2=0$
holds. Since $\p_iu=-\p_ir=-\frac{x^i}{r}$ on $t=t_0$, one then has $\p_tu|_{t_0}=c|_{t_0}$ and
 \begin{equation}\label{mu t0}
  \mu|_{t_0}=c|_{t_0}.
 \end{equation}
Note that the authors in \cite{Ch07}, \cite{MY17} and \cite{Sp16} apply the inverse foliation density to prove the formation of shocks when $\mu\rightarrow 0+$ holds with the development of time $t$. In the paper, on the contrary, we will show $\mu\geq C>0$
as long as the smooth solution $\phi$ of equation \eqref{main equation} exists as in \cite{Ding4}-\cite{Ding5}.

Note that
 \begin{equation}\label{Lr}
  \Lr=-\mathrm{grad}\ u=-g^{\al\be}\p_{\al}u\p_{\be}
 \end{equation}
is a tangent vector field for the outgoing light cone $\{u=C\}$. In addition, it is easy to know that $\Lr$ is geodesic and $\Lr t=\mu^{-1}$. Thus, we rescale $\Lr$ as
 \begin{equation}\label{L}
  L=\mu\Lr.
 \end{equation}
To obtain the vector field of the incoming light cone, just as in \cite{MY17}, let
 \begin{equation}\label{Tr}
  \Tr=c^{-1}(\p_t-L).
 \end{equation}
Then by the definition of null frames, we set
 \begin{equation}\label{T}
  T=c^{-1}\mu\Tr,
 \end{equation}
 \begin{equation}\label{Lb}
  \Lb=c^{-2}\mu L+2T,
 \end{equation}
where $L$ and $\Lb$ are two vector fields in the null frame. About the other vector fields $\{X_1,X_2\}$ in the null frame,
we take use of the vector field $L$ to construct them. To this end, one extends the local coordinates $\{\th^1,\th^2\}$
on $\mathbb{S}^2$ as follows
 \begin{equation*}
  L\vt^{A}=0,\ \vt^{A}|_{t=1}=\th^{A},
 \end{equation*}
here and below $A=1,2$. Subsequently, let
 \begin{equation}\label{X}
  X_1=\frac{\p}{\p\vt^1},\ X_2=\frac{\p}{\p\vt^2}.
 \end{equation}
A direct computation yields
\begin{lemma}\label{null frame}
$\{L,\Lb,X_1,X_2\}$ constitutes a null frame with respect to the metric $g$ in \eqref{g}, and admits the following identities
\begin{equation*}
 g(L,L)=g(\Lb,\Lb)=g(L,X_A)=g(\Lb,X_A)=0,\ g(L,\Lb)=-2\mu.
\end{equation*}
In addition,
 \begin{equation*}
  g(L,T)=-\mu,\ g(T,T)=c^{-2}\mu^2.
 \end{equation*}
And
 \begin{equation*}
  Lt=1,\ Lu=0,\ Tt=0,\ Tu=1,\ \Lb t=c^{-2}\mu,\ \Lb u=2.
 \end{equation*}
\end{lemma}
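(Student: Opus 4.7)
The lemma is a pure unpacking of the definitions: every identity drops out once one writes $L^\al=-\mu g^{\al\be}\p_\be u$, uses the eikonal equation \eqref{eikonal equation}, the defining formula $\mu=1/(c^{-2}\p_tu)$, and the diagonal form $g_{\al\be}=\mathrm{diag}(-c^2,1,1,1)$ in \eqref{g}. There is no analytic content; the plan is simply to perform the calculations in the right order so that later identities feed off earlier ones.

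First I would dispatch everything involving $L$ alone. From $L^\al=-\mu g^{\al\be}\p_\be u$, the eikonal equation gives $g(L,L)=\mu^2 g^{\be\de}\p_\be u\p_\de u=0$ and $Lu=-\mu g^{\al\be}\p_\al u\p_\be u=0$, while the definition of $\mu$ gives $Lt=-\mu g^{\al\be}\p_\al u\p_\be t=\mu\cdot\mu^{-1}=1$. Next, the construction $L\vt^A=0$ with $\vt^A|_{t=1}=\th^A$, combined with $Lu=0$ and $Lt=1$, shows that $L$ is tangent to the cones $\{u=\text{const}\}$ and transverse to $\Sigma_t$, so $(t,u,\vt^1,\vt^2)$ is a genuine (local) coordinate chart in a neighborhood of the initial hypersurface and $X_A=\p/\p\vt^A$ at fixed $(t,u)$. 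In particular $X_At=X_Au=0$, and hence $g(L,X_A)=g_{\al\ga}L^\al X_A^\ga=-\mu X_A^\ga\p_\ga u=-\mu X_Au=0$.

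With $L$ in hand, the pieces involving $T$ and $\Lb$ reduce to one numerical step and linearity. Computing $L^0=\mu c^{-2}\p_tu=1$ (since $\p_tu=c^2/\mu$), one finds $g(L,\p_t)=g_{00}L^0=-c^2$. Since $T=c^{-2}\mu(\p_t-L)$, this yields
\begin{equation*}
g(L,T)=c^{-2}\mu\bigl(g(L,\p_t)-g(L,L)\bigr)=-\mu,\qquad g(T,T)=c^{-4}\mu^2\bigl(g(\p_t,\p_t)-2g(\p_t,L)+g(L,L)\bigr)=c^{-2}\mu^2,
\end{equation*}
and $g(T,X_A)=c^{-2}\mu g(\p_t,X_A)=-\mu X_At=0$. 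Feeding these into $\Lb=c^{-2}\mu L+2T$ by linearity immediately gives $g(L,\Lb)=-2\mu$, $g(\Lb,\Lb)=-4c^{-2}\mu^2+4c^{-2}\mu^2=0$, and $g(\Lb,X_A)=0$. Finally, for the actions on $t$ and $u$, I would read off $Tt=c^{-2}\mu(1-1)=0$, $Tu=c^{-2}\mu\cdot c^2/\mu=1$, and then $\Lb t=c^{-2}\mu$, $\Lb u=2$ by the same linear combination.

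The only conceptually non-trivial step is justifying $X_At=X_Au=0$, since this is where the extension of the angular coordinates by $L$-transport enters; once one observes that $Lu=0$ and $Lt=1$ force $(t,u,\vt^1,\vt^2)$ to be an honest chart, every remaining identity is algebra.
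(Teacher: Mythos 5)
Your computations are all correct, and they are exactly the "direct computation" the paper invokes without writing out: the paper gives no proof of this lemma beyond the phrase "A direct computation yields," and your unpacking of $L^\al=-\mu g^{\al\be}\p_\be u$, the eikonal equation, the definition of $\mu$, and the linear combinations defining $T$ and $\Lb$ is the intended argument. Your added justification that $X_At=X_Au=0$ via the $(\mathfrak t,u,\vt^1,\vt^2)$ chart is consistent with how the paper uses these coordinates in Subsection 2.2.3.
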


\vskip 0.2 true cm

{\bf 2.2.3. Domains, coordinates and norms}

\vskip 0.2 true cm

As in \cite{Sp16}, one can perform the change of coordinates:
$(t, x^1, x^2, x^3)\longrightarrow (\mathfrak t, u, \vartheta^1, \vartheta^2)$ near $C_0$ with
\begin{equation}\label{H0-7}
\left\{
\begin{aligned}
&\mathfrak{t}=t,\\
&u=u(t,x),\\
&\vartheta^1=\vartheta^1(t,x),\\
&\vartheta^2=\vartheta^2(t,x).
\end{aligned}
\right.
\end{equation}
For notational convenience, we introduce the following subsets
\begin{definition}\label{Definition domain}
 \begin{align*}
  \Sigma_\mathfrak t^u&=\{(\mathfrak t',u',\vartheta):\ \mathfrak t'=\mathfrak t, 0\leq u'\leq u\}, u\in [0,4\delta],\\
  C_u&=\{(\mathfrak t',u',\vartheta):\ \mathfrak t'\geq t_0, u'=u\},\\
  C_u^\mathfrak t&=\{(\mathfrak t',u',\vartheta):\ t_0\leq \mathfrak t'\leq\mathfrak t, u'=u\},\\
  S_{\mathfrak t,u}&=\Sigma_\mathfrak t\cap C_u,\\
  D^{\mathfrak t,u}&=\{(\mathfrak t',u',\vartheta):\ t_0\leq \mathfrak t'<\mathfrak t, 0\leq u'\leq u\}.
 \end{align*}
\end{definition}

Note that $\vartheta=(\vartheta^1,\vartheta^2)$ are the coordinates on sphere $S_{\mathfrak t,u}$. Then under the new coordinate system $(\mathfrak t,u,\vartheta^1,\vartheta^2)$, one has $L=\frac{\partial}{\partial \mathfrak t}, T=\frac{\partial}{\partial u}-\Xi$ with $\Xi=\Xi^AX_A$.
In addition, it follows from direct computation that
\begin{lemma}\label{Lemma jacobian}
In domain $D^{\mathfrak t,u}$, the Jacobian determinant of map $(\mathfrak t,u,\vartheta^1,\vartheta^2)\rightarrow (x^0,x^1,x^2,x^3)$ is
 \begin{equation*}
  \det\frac{\p(x^0,x^1,x^2,x^3)}{\p(\mathfrak t,u,\vartheta^1,\vartheta^2)}=c^{-1}\mu\sqrt{\det\gs}.
 \end{equation*}
\end{lemma}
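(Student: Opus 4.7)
The plan is to compute the determinant by relating the metric $g$ expressed in the two coordinate systems via the Jacobian, and extracting a square root. In the Cartesian coordinates $(x^0,x^1,x^2,x^3)$, the metric \eqref{g} is diagonal, so $\det g_{\text{Cart}}=-c^2$. Under a coordinate change $y\mapsto x$, the metric transforms as $g^{\text{new}}=J^{\!\top}g^{\text{old}}J$ with $J_{\ \alpha}^{\mu}=\partial x^\mu/\partial y^\alpha$, whence $\det g^{\text{new}}=(\det J)^2\det g^{\text{old}}$. Therefore it suffices to compute $\det g$ in the new coordinates $(\mathfrak t,u,\vartheta^1,\vartheta^2)$ and take a signed square root.

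First I would translate the relations $\partial/\partial\mathfrak t=L$, $\partial/\partial u=T+\Xi$, $\partial/\partial\vartheta^A=X_A$ into metric components. Using Lemma \ref{null frame} together with the fact that $\Xi=\Xi^A X_A$ is tangent to $S_{\mathfrak t,u}$, and that $g(T,X_A)=0$ (which follows because $T=c^{-2}\mu(\partial_t-L)$, $g_{0i}=0$, and $X_A^0=X_A\mathfrak t=0$), one obtains
\begin{equation*}
(g_{\alpha\beta}^{\text{new}})=\begin{pmatrix} 0 & -\mu & 0 & 0 \\ -\mu & c^{-2}\mu^{2}+\slashed{g}(\Xi,\Xi) & \slashed{g}(\Xi,X_1) & \slashed{g}(\Xi,X_2) \\ 0 & \slashed{g}(\Xi,X_1) & \slashed{g}_{11} & \slashed{g}_{12} \\ 0 & \slashed{g}(\Xi,X_2) & \slashed{g}_{12} & \slashed{g}_{22} \end{pmatrix}.
\end{equation*}

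Next I would expand this determinant along the first row: only the entry $g_{12}^{\text{new}}=-\mu$ survives, and the cofactor, expanded in turn along its first column, produces another factor $-\mu$ times $\det\slashed{g}$. The result is $\det g^{\text{new}}=-\mu^2\det\slashed{g}$. Equating $-\mu^2\det\slashed{g}=(\det J)^2(-c^2)$ gives $(\det J)^2=c^{-2}\mu^2\det\slashed{g}$, and taking the positive root (justified by the orientation induced by the construction of the coordinates $\vartheta^A$ transported along $L$ from the standard orientation at $t=1$, together with $\mu>0$ and $\sqrt{\det\slashed g}>0$) yields $\det J=c^{-1}\mu\sqrt{\det\slashed{g}}$.

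The only mildly subtle points are the vanishing of $g(T,X_A)$ and $g(L,X_A)$, both of which reduce to observing that $X_A$ annihilates $u$ and $\mathfrak t$ by construction; once these are in hand the computation is a short linear-algebra exercise, so no serious obstacle is expected.
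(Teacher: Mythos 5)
Your argument is correct and complete: the paper offers no proof (it states the lemma "follows from direct computation"), and your route — computing $\det g$ in the optical coordinates to get $-\mu^{2}\det\gs$, comparing with $\det g=-c^{2}$ in Cartesian coordinates via $(\det J)^{2}$, and fixing the sign by orientation — is a standard and clean way to carry out that computation. It is also consistent with the identity $\sqrt{|\det g|}=\mu\sqrt{\det\gs}$ that the paper uses later in Section \ref{Section 4}, and your verifications of $g(L,X_A)=g(T,X_A)=0$ from Lemma \ref{null frame} are exactly the points that need checking.
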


\begin{remark}
From Lemma \ref{Lemma jacobian}, it is easy to know that if the metric $\gs$ are locally regular, that is, $\det\gs>0$,
the transformation of coordinates between $(\mathfrak t,u,\vartheta^1,\vartheta^2)$ and $(x^0,x^1,x^2,x^3)$
then makes sense as long as $\mu>0$.
\end{remark}

For the domains with $\mu>0$, we now give some definitions of related integrations and norms, which will be utilized repeatedly in subsequent sections.
\begin{definition}\label{Definition norm}For any continuous function $f$, set
 \begin{align*}
  &\int_{S_{\mathfrak t,u}}f=\int_{S_{\mathfrak t,u}}f(\mathfrak t,u,\vartheta)\sqrt{\det\gs(\mathfrak t,u,\vartheta)}\d\vartheta,\qquad\|f\|_{L^2(S_{\mathfrak t,u})}^2=\int_{S_{\mathfrak t,u}}|f|^2,\\
  &\int_{C_u^\mathfrak t}f=\int_{t_0}^\mathfrak t\int_{S_{\tau,u}}f(\tau,u,\vartheta)\sqrt{\det\gs(\tau,u,\vartheta)}\d\vartheta \d\tau,\qquad\|f\|_{L^2(C_u^\mathfrak t)}^2=\int_{C_u^\mathfrak t}|f|^2,\\
  &\int_{\Sigma_\mathfrak t^u}f=\int_{0}^u\int_{S_{\mathfrak t,u'}}f(\mathfrak t,u',\vartheta)\sqrt{\det\gs(\mathfrak t,u',\vartheta)}\d\vartheta \d u',\qquad\|f\|_{L^2(\Sigma_\mathfrak t^u)}^2=\int_{\Sigma_\mathfrak t^u}|f|^2,\\
  &\int_{D^{\mathfrak t,u}}f=\int_{t_0}^\mathfrak t\int_{0}^u\int_{S_{\tau,u'}}f(\tau,u',\vartheta)\sqrt{\det\gs(\tau,u',\vartheta)}\d\vartheta \d u'\d\tau,\qquad\|f\|_{L^2(D^{\mathfrak t,u})}^2=\int_{D^{\mathfrak t,u}}|f|^2.
 \end{align*}
\end{definition}

\vskip 0.2 true cm

{\bf 2.2.4. Connection, the second fundamental form and torsion form}

\vskip 0.2 true cm

Let $\D$ be the Levi-Civita connection of $g$. Without causing confusion, we use $\nas$ to denote the Levi-Civita connection of $\gs$.

Under the null frame $\{L,\Lb,X_1,X_2\}$, define \textit{the second fundamental forms} $\chi$ and $\sigma$ as
 \begin{equation}\label{Definition second fundamental form}
  \chi_{AB}=g(\D_AL,X_B),\ \si_{AB}=g(\D_A\Tr,X_B).
 \end{equation}
And \textit{the torsion one forms} $\ze$ and $\eta$ are defined by
 \begin{equation}\label{Definition torsion one form}
  \ze_A=g(\D_AL,T),\ \eta_A=-g(\D_AT,L).
 \end{equation}
Direct computation yields
 \begin{align}
 &\si_{AB}=-c^{-1}\chi_{AB},\label{sigma}\\
 &\ze_A=-c^{-1}\mu\ds_Ac,\label{zeta}\\
 &\eta_A=-c^{-1}\mu\ds_Ac+\ds_A\mu.\label{eta}
 \end{align}

\begin{lemma}\label{Lemma connection coefficients}
For the connection coefficients of the related frames, it holds that
\begin{equation*}
 \begin{split}
  &\mathscr{D}_{L}L=\mu^{-1}L\mu L,\ \mathscr{D}_{T}L= \eta^{A}X_A-c^{-1}L(c^{-1}\mu)L,\ \mathscr{D}_{A}L=-\mu^{-1}\ze_AL+\chi_A^BX_B,\\
  &\mathscr{D}_{L}T=- \ze^{A}X_A-c^{-1}L(c^{-1}\mu)L,\ \mathscr{D}_AT=\mu^{-1}\eta_AT-c^{-2}\mu\chi_A^BX_B,\\
  &\mathscr{D}_TT=c^{-3}\mu[Tc+L(c^{-1}\mu)]L+\{c^{-1}[Tc+L(c^{-1}\mu)]+T\ln(c^{-1}\mu)\}T-c^{-1}\mu\ds ^A(c^{-1}\mu)X_A,\\
  &\mathscr{D}_LX_A=\mathscr{D}_AL,\ \mathscr{D}_TX_A=\mathscr{D}_AT,\ \mathscr{D}_AX_B=\nas_AX_B+\mu^{-1}\chi_{AB}T,\ \mathscr{D}_A\Lb=\mu^{-1}\eta_A\Lb-c^{-2}\mu\chi_A^BX_B,\\
  &\mathscr{D}_{\Lb}L=-L(c^{-2}\mu)L+2\eta^AX_A,\ \mathscr{D}_L\Lb=-2\ze^AX_A,\ \mathscr{D}_{\Lb}\Lb=[\mu^{-1}\Lb\mu+L(c^{-2}\mu)]\Lb-2\mu\ds^A(c^{-2}\mu)X_A,
 \end{split}
\end{equation*}
where $\ze^{A}=\slashed g^{AB}\ze_B$ and $\eta^{A}=\slashed g^{AB}\eta_B$.
\end{lemma}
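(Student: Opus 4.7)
The plan is to recover each covariant derivative by decomposing $\D_V W$ along the null frame $\{L, \Lb, X_1, X_2\}$ and extracting coefficients via the nondegenerate pairings from Lemma \ref{null frame}, namely $g(L, \Lb) = -2\mu$, $g(L, T) = -\mu$, $\gs(X_A, X_B) = \gs_{AB}$, with all other pairings vanishing. Coefficients are then read off from products $g(\D_V W, \cdot)$ computed either directly from the definitions of $\chi, \si, \ze, \eta$, or from metric compatibility $Vg(W, U) = g(\D_V W, U) + g(W, \D_V U)$ combined with the torsion-free identity $\D_V W - \D_W V = [V, W]$.

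First, since $\Lr = -\mathrm{grad}\, u$ is geodesic (a direct consequence of differentiating the eikonal equation \eqref{eikonal equation}) and $L = \mu\Lr$, the identity $\D_L L = \mu^{-1}(L\mu)\, L$ follows from a Leibniz expansion together with $\D_{\Lr}\Lr = 0$. Next, for $\D_A L$, the $\Lb$-component vanishes because $g(\D_A L, L) = \tfrac{1}{2}X_A g(L,L) = 0$; the $X_B$-component is exactly $\chi_A^B$ by the definition of $\chi$; and the $L$-component is fixed by $g(\D_A L, T) = \ze_A$ together with $g(L, T) = -\mu$, which forces $-\mu^{-1}\ze_A$. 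An entirely parallel argument, invoking the definition of $\eta$ and the identity $\si_{AB} = -c^{-1}\chi_{AB}$ from \eqref{sigma}, yields $\D_A T$. For $\D_A X_B$, I would split off the tangential Levi-Civita piece $\nas_A X_B$ on $S_{\mathfrak{t}, u}$ and determine the normal part from $g(\D_A X_B, L) = -g(X_B, \D_A L)$ and $g(\D_A X_B, T) = -g(X_B, \D_A T)$; after substituting the formulae just derived, these collapse to the single term $\mu^{-1}\chi_{AB} T$.

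For the mixed derivatives $\D_L T$, $\D_T L$, and $\D_T T$, I would combine metric compatibility with the commutator identity. In the optical coordinates $(\mathfrak{t}, u, \vartheta)$ one has $L = \p_{\mathfrak t}$ and $T = \p_u - \Xi$, so $[L, T] = -(L\Xi^A)X_A$ is purely tangential to $S_{\mathfrak t, u}$; this gives $g(\D_L T - \D_T L, L) = g(\D_L T - \D_T L, T) = 0$, and the $L$- and $T$-components of both $\D_L T$ and $\D_T L$ are then forced by differentiating $g(L, L) = 0$, $g(L, T) = -\mu$, and $g(T, T) = c^{-2}\mu^2$ along $L$ and $T$. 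Their sphere-tangential components are obtained from $g(\D_L T, X_A) = -g(T, \D_A L)$ and $g(\D_T L, X_A) = -g(L, \D_A T)$, and are rewritten in the stated form using \eqref{zeta}--\eqref{eta}. The formula for $\D_T T$ follows from the same pairing scheme, with its $X_A$-component coming from $g(\D_T T, X_A) = -g(T, \D_A T)$ and a short simplification via \eqref{eta}. Finally, the $\Lb$-sector identities follow by linearity and Leibniz from $\Lb = c^{-2}\mu L + 2T$, e.g. $\D_A \Lb = X_A(c^{-2}\mu)\, L + c^{-2}\mu\, \D_A L + 2\D_A T$, and reduce to the stated expressions after another application of \eqref{zeta}--\eqref{eta}.

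The main bookkeeping obstacle is the systematic propagation of the $c$ and $c^{-2}\mu$ weights through the Leibniz expansions, together with the consistent use of \eqref{sigma}--\eqref{eta} to eliminate every residual $\ds c$ term in favor of $\chi, \ze, \eta, \mu, c$ alone. The derivation of $\D_{\Lb}\Lb$ is the most delicate: multiple Leibniz contributions arising from $\Lb = c^{-2}\mu L + 2T$ must be carefully collected so that only $\mu^{-1}\Lb\mu + L(c^{-2}\mu)$ survives in the $\Lb$-component and the sphere-tangential part reduces cleanly to $-2\mu\, \ds^A(c^{-2}\mu) X_A$.
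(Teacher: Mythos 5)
Your proposal is correct: the frame decomposition of $\D_VW$ against $\{L,T,X_1,X_2\}$ via the pairings of Lemma \ref{null frame}, combined with metric compatibility, torsion-freeness (using that $[L,T]$ and $[L,X_A]$, $[T,X_A]$ are $S_{\mathfrak t,u}$-tangential in optical coordinates), the geodesic property of $\Lr$, and the identities \eqref{sigma}--\eqref{eta}, reproduces every stated formula, and I verified the representative cases ($\D_AL$, $\D_AT$, $\D_LT$, $\D_A\Lb$, $\D_L\Lb$, $\D_{\Lb}L$) close correctly. The paper states this lemma without proof as a direct computation, and your argument is exactly the standard computation intended.
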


\vskip 0.2 true cm

{\bf 2.2.5. Error vectors and rotation vectors}

\vskip 0.2 true cm
On the initial hypersurface $\Sigma_{t_0}^{4\delta}$, one has that $ \Tr^i=-\f{x^i}{r}$,
$L^0=1$, $L^i=\f{x^i}{r}+O(\delta^{(1-\varepsilon_0)p})$ and
$\chi_{AB}=\f1r\slashed g_{AB}$.
Note that on $\Sigma_{t_0}$, $r$ is just $t_0-u$. For $\mathfrak t\geq t_0$, we
define the ``{\it{error vectors}}" with the components being
\begin{definition}\label{Definition error vectors and trace-free vectors}
 \begin{align*}
  \check{L}^0&=0,\\
  \check{L}^i&=L^i-\frac{x^i}{\rho},\\
  \check{T}^i&=\Tr^i+\frac{x^i}{\rho},\\
  \check{\chi}_{AB}&=\chi_{AB}-\frac{1}{\rho}\gs_{AB},
 \end{align*}
here and below $\rho=\mathfrak t-u$.
\end{definition}

\begin{lemma}\label{Lemma relation of error vectors and trace-free vectors}
The error vectors satisfy
 \begin{equation*}
  \begin{split}
   \check{L}^i&=-c\check{T}^i+(c-1)\rho^{-1}x^i,\\
   \trgs\check{\chi}&=\trgs\chi-2\rho^{-1},\\
   |\check{\chi}|^2&=|\chi|^2-2\rho^{-1}\trgs\chi+2\rho^{-2}.
  \end{split}
 \end{equation*}
\end{lemma}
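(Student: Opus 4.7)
The three identities are essentially algebraic consequences of the definitions, so my plan is to verify each one by direct substitution, with only one small preparatory observation needed.

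First, I would establish the relation between the components of $L$ and $\mathring{T}$. From $\mathring{T}=c^{-1}(\p_t-L)$ in \eqref{Tr} and from $Lt=1$ (hence $L^0=1$), one reads off $\mathring{T}^0=0$ and $\mathring{T}^i=-c^{-1}L^i$, i.e.\ $L^i=-c\mathring{T}^i$. Substituting this into the definition $\check{L}^i=L^i-x^i/\rho$ and into $-c\check{T}^i+(c-1)\rho^{-1}x^i=-c(\mathring{T}^i+x^i/\rho)+(c-1)x^i/\rho$, both expressions collapse to $-c\mathring{T}^i-x^i/\rho$, which gives the first identity.

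For the second identity I would just take the $\slashed g$-trace of $\check{\chi}_{AB}=\chi_{AB}-\rho^{-1}\slashed{g}_{AB}$; since $S_{\mathfrak t,u}$ is $2$-dimensional one has $\slashed{g}^{AB}\slashed{g}_{AB}=2$, so $\trgs\check{\chi}=\trgs\chi-2\rho^{-1}$. For the third identity, I would expand
\begin{equation*}
|\check{\chi}|^2=\slashed{g}^{AC}\slashed{g}^{BD}\bigl(\chi_{AB}-\rho^{-1}\slashed{g}_{AB}\bigr)\bigl(\chi_{CD}-\rho^{-1}\slashed{g}_{CD}\bigr),
\end{equation*}
and using $\slashed{g}^{BD}\slashed{g}_{AB}=\delta^{D}_{A}$ together with $\slashed{g}^{AB}\slashed{g}_{AB}=2$, the cross term contributes $-2\rho^{-1}\trgs\chi$ and the square term $2\rho^{-2}$, giving $|\check{\chi}|^2=|\chi|^2-2\rho^{-1}\trgs\chi+2\rho^{-2}$.

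There is no real obstacle here; the only point requiring attention is the very first step, namely recognizing that the identity $\mathring{T}^i=-c^{-1}L^i$ (which links the two error vectors) follows cleanly from \eqref{Tr} together with $L^0=1$. Once that is in hand, all three assertions reduce to routine index manipulations using the definitions in Definition 2.4 and the two-dimensionality of the sphere $S_{\mathfrak t,u}$.
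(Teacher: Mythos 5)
Your proposal is correct and is exactly the direct computation the paper leaves implicit (the lemma is stated without proof): the key observation $\mathring{T}^i=-c^{-1}L^i$ follows from \eqref{Tr} and $L^0=Lt=1$, and the trace identities follow from $\slashed g^{AB}\slashed g_{AB}=2$ on the two-dimensional spheres $S_{\mathfrak t,u}$. Nothing is missing.
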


Let
 \begin{equation}\label{vi}
  v_i=g(\Om_i,\Tr)={\ep_{ijk}}x^j\check{T}^k.
 \end{equation}
Then
 \begin{equation}\label{Ri}
  R_i=\Om_i-v_i\Tr
 \end{equation}
are the rotation vector fields of $S_{\mathfrak t,u}$.

\begin{remark}\label{Remark componets of gs}
The components of $\gs$ satisfy
 \begin{equation*}
  \begin{split}
   &\gs^{\mu\nu}=g^{\mu\nu}+\frac{1}{2}\mu^{-1}(L^{\mu}\Lb^{\nu}+\Lb^{\mu}L^{\nu}),\\
   &\gs^{\mu\nu}=\gs^{AB}X_A^{\mu}X_B^{\nu}\\
   \end{split}
 \end{equation*}
 and for any smooth function $\Psi$,
$$\gs^{\mu\nu}\p_{\mu}\Psi\p_{\nu}\Psi=|\ds\Psi|^2.$$
\end{remark}

\vskip 0.2 true cm

{\bf 2.2.6. Curvature tensor, energy-momentum tensor and deformation tensor}

\vskip 0.2 true cm
\textit{The Riemann curvature tensor} $\mathscr{R}$ of $g$ can be defined as follows
 \begin{equation}\label{Definition Riemann curvature tensor}
  \mathscr{R}_{WXYZ}=-g(\mathscr{D}_W{\mathscr{D}_X}Y-\mathscr{D}_X{\mathscr{D}_W}Y-{\mathscr{D}_{[W,X]}}Y,Z).
 \end{equation}
Since
 \begin{equation*}
  \mathscr{R}_{\mu\nu\alpha\beta}=\partial_{\nu}\Gamma_{\mu\beta\alpha}-\partial_{\mu}\Gamma_{\nu\beta\alpha}+g^{\kappa\lambda}(\Gamma_{\nu\kappa\alpha}\Gamma_{\mu\lambda\beta}-\Gamma_{\mu\kappa\alpha}\Gamma_{\nu\lambda\beta}),
 \end{equation*}
by the definitions \eqref{Definition Riemann curvature tensor} and \eqref{Definition 1st Christoffel symbol},
the only non-vanishing components of $\mathscr{R}$ for the metric $g$ in \eqref{g} are
 \begin{equation}\label{R0i0j}
  \mathscr{R}_{i0j0}=\mathscr{R}_{0i0j}=c\partial_i\partial_jc,
 \end{equation}
and the only non-vanishing component  under the frame $\{L,T,X_1,X_2\}$ is
 \begin{equation}\label{RLALB}
  \begin{split}
   \mathscr{R}_{LALB}&=-\mu^{-1}c(Tc)\check{\chi}_{AB}-\mu^{-1}\rho^{-1}c(Tc)\slashed{g}_{AB}\\
                     &\quad-\frac{1}{2}pc^4\varphi_0^{p-1}\nas_{AB}^2\varphi_0-\frac{3}{2}pc^3\varphi_0^{p-1}\ds_Ac\ds_B\varphi_0-\frac{1}{2}p(p-1)c^4\varphi_0^{p-2}\ds_A\varphi_0\ds_B\varphi_0.
  \end{split}
 \end{equation}
For the sake of convenience, define
 \begin{equation}\label{RLALB small}
  \check{\mathscr{R}}_{LALB}=\mathscr{R}_{LALB}+\mu^{-1}c(Tc)\check{\chi}_{AB}+\mu^{-1}\rho^{-1}c (Tc)\slashed{g}_{AB}.
 \end{equation}
Here we point out that $\check{\mathscr{R}}_{LALB}$ will admit
the better time-decay rate and higher smallness orders of $\de$ than $\mathscr{R}_{LALB}$.

For any smooth function $\Psi$, one can denote its associate \textit{energy-momentum tensor} by
 \begin{equation}\label{Definition energy-momentum tensor}
  Q_{\alpha\beta}=Q_{\alpha\beta}[\Psi]=(\partial_{\alpha}\Psi)(\partial_{\beta}\Psi)-\frac{1}{2}g_{\alpha\beta}g^{\ka\lambda}(\partial_{\ka}\Psi)(\partial_{\lambda}\Psi).
 \end{equation}
The components of energy-momentum tensor in terms of the null frame can be computed as follows
 \begin{equation}\label{components of energy-momentum tensor}
  \begin{split}
   &Q_{LL}=(L\Psi)^2,\ Q_{\Lb\Lb}=(\Lb\Psi)^2,\ Q_{L\Lb}=\mu|\ds\Psi|^2,\\
   &Q_{LA}=L\Psi\ds_A\Psi,\ Q_{\Lb A}=\Lb\Psi\ds_A\Psi,\\
   &Q_{AB}=\ds_A\Psi\ds_B\Psi-\frac{1}{2}\gs_{AB}(|\ds\Psi|^2-\mu^{-1}L\Psi\Lb\Psi).
  \end{split}
 \end{equation}

For any vector field $V$,  denote its associate \textit{deformation tensor} by
 \begin{equation}\label{Definition deformation tensor}
  {}^{(V)}\pi_{\alpha\beta}=g(\mathscr{D}_{\alpha}V,\partial_{\beta})+g(\mathscr{D}_{\beta}V,\partial_{\alpha}).
 \end{equation}
Moreover, for any two vector fields $X$, $Y$, one has
 \begin{align*}
  {}^{(V)}\pi_{XY}&={}^{(V)}\pi_{\al\be}X^{\al}Y^{\be}=g(\D_XV,Y)+g(\D_YV,X).
 \end{align*}
The components of $^{(V)}\pi$ under the related frames and the metric $g$ in \eqref{g} can be obtained as follows

(1) When $V=L$,
\begin{equation}\label{L pi}
 \begin{split}
  &^{(L)}\pi_{LL}=0,\ ^{(L)}\pi_{LT}=-L\mu,\ ^{(L)}\pi_{TT}=2c^{-1}\mu L(c^{-1}\mu),\\
  &^{(L)}\slashed{\pi}_{LA}=0,\ ^{(L)}\slashed{\pi}_{TA}=c^2\ds_A(c^{-2}\mu),\ ^{(L)}\slashed{\pi}_{AB}=2\chi_{AB},\\
  &^{(L)}\pi_{L\Lb}=-2L\mu,\ ^{(L)}\pi_{\Lb\Lb}=4\mu L(c^{-2}\mu),\ ^{(L)}\slashed{\pi}_{\Lb A}=2c^2\ds_A(c^{-2}\mu).
 \end{split}
\end{equation}

(2) When $V=T$,
\begin{equation}\label{T pi}
 \begin{split}
  &^{(T)}\pi_{LL}=0,\ ^{(T)}\pi_{LT}=-T\mu,\ ^{(T)}\pi_{TT}=T(c^{-2}\mu^2),\\
  &^{(T)}\slashed{\pi}_{LA}=-c^2\ds_A(c^{-2}\mu),\ ^{(T)}\slashed{\pi}_{TA}=0,\ ^{(T)}\slashed{\pi}_{AB}=-2c^{-2}\mu\chi_{AB},\\
  &^{(T)}\pi_{L\Lb}=-2T\mu,\ ^{(T)}\pi_{\Lb\Lb}=4\mu T(c^{-2}\mu),\ ^{(T)}\slashed{\pi}_{\Lb A}=-\mu\ds_A(c^{-2}\mu).
 \end{split}
\end{equation}

(3) When $V=R_i$,
\begin{equation}\label{Ri pi}
 \begin{split}
  &^{(R_i)}\pi_{LL}=0,\ ^{(R_i)}\pi_{LT}=-R_i\mu,\ ^{(R_i)}\pi_{TT}=2c^{-1}\mu R_i(c^{-1}\mu),\\
  &^{(R_i)}\slashed{\pi}_{LA}=-\check{\chi}_{AB}R_i^B+\epsilon_{ijk}\check{L}^j\ds_Ax^k-v_i\ds_Ac,\\
  &^{(R_i)}\slashed{\pi}_{TA}=c^{-2}\mu\check{\chi}_{AB}R_i^B-c^{-2}(c-1)\mu\rho^{-1}\gs_{AB}R_i^B
  +c^{-1}\mu\epsilon_{ijk}\check{T}^j\ds_Ax^k+v_i\ds_A(c^{-1}\mu),\\
  &^{(R_i)}\slashed{\pi}_{AB}=2c^{-1}v_i\chi_{AB},\
  ^{(R_i)}\pi_{L\Lb}=-2R_i\mu,\ ^{(R_i)}\pi_{\Lb\Lb}=4\mu R_i(c^{-2}\mu).
 \end{split}
\end{equation}

\vskip 0.2 true cm

{\bf 2.2.7. Lie derivatives and commutators}

\vskip 0.2 true cm

According to (8.26) in \cite{Sp16}, one has
\begin{lemma}\label{Lemma commutator [Des,Z]}
For any symmetric  2-tensor $\xi$ on $S_{\mathfrak t,u}$,
 \begin{equation}\label{commutator [nas,Lies]}
  ([\nas_A,\Lies_Z]\xi)_{BC}=(\check{\nas}_A{}^{(Z)}\pis_B^D)\xi_{CD}+(\check{\nas}_A{}^{(Z)}\pis_C^D)\xi_{BD},
 \end{equation}
where
 \begin{equation*}
  \check{\nas}_A{}^{(Z)}\pis_{BC}=\frac{1}{2}(\nas_A{}^{(Z)}\pis_{BC}+\nas_B{}^{(Z)}\pis_{AC}-\nas_C{}^{(Z)}\pis_{AB}).
 \end{equation*}
For any vector field $Z\in\{L,\rho L,T,R_1,R_2,R_3\}$ and smooth function $f$,
 \begin{equation}\label{commutator of [nas^2,Lies]}
   \big([\nas^2,\Lies_Z]f\big)_{AB}=(\check{\nas}_A{}^{(Z)}\pis_B^C)\ds cf
 \end{equation}
and
 \begin{equation*}
  [\Des,Z]f={}^{(Z)}\pis^{AB}\nas_{AB}^2f+\check\nas_A{}^{(Z)}\pis^{AB}\ds_Bf,
 \end{equation*}
moreover, for any vector field $Z\in\{L,T,R_1,R_2,R_3\}$,
 \begin{equation*}
  \Lies_Z\slashed g_{AB}=^{(Z)}\slashed{\pi}_{AB}.
 \end{equation*}
\end{lemma}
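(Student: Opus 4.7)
The plan is to prove the identities in reverse order, starting from the foundational fact $\Lies_Z \gs_{AB} = {}^{(Z)}\pis_{AB}$ for $Z\in\{L,T,R_1,R_2,R_3\}$. By the definition \eqref{Definition deformation tensor}, one has ${}^{(V)}\pi_{\al\be} = (\Lie_V g)_{\al\be}$ for any vector field $V$. Contracting both sides with two vectors $X_A, X_B$ tangent to $S_{\mathfrak t,u}$, and using the convention stated in the preamble that $\Lies_Z$ denotes the $S_{\mathfrak t,u}$-restriction of the spacetime Lie derivative, the identity follows immediately. This will serve as the core input for everything else.

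Next, I would derive the standard coordinate formula
\[
([\nas_A, \Lies_Z]\xi)_{B_1\cdots B_n} \;=\; \sum_{i=1}^{n}(\Lies_Z \Gas^C_{AB_i})\,\xi_{B_1\cdots B_{i-1}C B_{i+1}\cdots B_n},
\]
which holds for any torsion-free connection $\nas$, any tensor $\xi$ on the sphere, and any (possibly non-tangential) $Z$; although $\Gas^C_{AB}$ is not tensorial, the quantity $\Lies_Z \Gas^C_{AB}$ is, because it is the difference of the Christoffel symbols of two nearby connections. The heart of the calculation is then the identity
\[
\Lies_Z \Gas^D_{AB} \;=\; \f12\gs^{DE}\bigl(\nas_A \Lies_Z \gs_{BE} + \nas_B \Lies_Z \gs_{AE} - \nas_E\Lies_Z \gs_{AB}\bigr),
\]
which follows by differentiating $\Gas^D_{AB} = \f12\gs^{DE}(\p_A\gs_{BE}+\p_B\gs_{AE}-\p_E\gs_{AB})$ in a coordinate chart and using $[\Lies_Z,\p_A]=0$. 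Substituting the foundational identity $\Lies_Z \gs = {}^{(Z)}\pis$ and recognizing the right-hand side as $\check\nas_A {}^{(Z)}\pis_B^D$ via the stated definition of $\check\nas$, one obtains $\Lies_Z \Gas^D_{AB} = \check\nas_A {}^{(Z)}\pis_B^D$. Specializing the general commutator to a symmetric 2-tensor then yields \eqref{commutator [nas,Lies]}.

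For \eqref{commutator of [nas^2,Lies]}, I would apply the general commutator to the 1-form $\xi = \ds f$ and invoke the elementary identity $\Lies_Z \ds f = \ds(Zf)$ (valid for any $Z$ and scalar $f$), which reduces $\Lies_Z \nas^2 f - \nas^2(Zf)$ to exactly $-(\check\nas_A {}^{(Z)}\pis_B^C)\ds_C f$, up to the overall sign convention in $[\nas^2,\Lies_Z]$. For the $[\Des,Z]f$ identity, I would expand $\Des f = \gs^{AB}\nas^2_{AB}f$, apply $Z$, and use $\Lies_Z \gs^{AB} = -{}^{(Z)}\pis^{AB}$ (obtained by raising both indices in the foundational identity) together with the $\nas^2$-commutator just derived; the two contributions assemble into the stated expression. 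The main technical obstacle I anticipate is the case $Z=\rho L$ appearing in \eqref{commutator of [nas^2,Lies]}: because $\rho L$ is not among the frame vectors for which $\Lies_Z \gs = {}^{(Z)}\pis$ was stated, one must either verify this identity directly for $\rho L$ by using the Leibniz rule for Lie derivatives under scalar multiplication (which produces extra terms proportional to $\ds\rho$ that must be reabsorbed into ${}^{(\rho L)}\pis$), or exploit the fact that both sides of \eqref{commutator of [nas^2,Lies]} are tensorial in $Z$ to reduce to the case $Z=L$ while tracking the correction terms from $\nas\rho$.
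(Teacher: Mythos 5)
Your derivation is correct. Note first that the paper itself offers no proof of this lemma at all --- it simply cites (8.26) of \cite{Sp16} --- so there is nothing in the text to compare against except the source you are effectively reconstructing. Your chain of reasoning (${}^{(Z)}\pi=\Lie_Zg$ from \eqref{Definition deformation tensor}, restriction to $S_{\mathfrak t,u}$ to get $\Lies_Z\gs={}^{(Z)}\pis$, the tensoriality of $\Lies_Z\Gas^{D}_{AB}$ and the identity $\Lies_Z\Gas^{D}_{AB}=\check\nas_A{}^{(Z)}\pis_B^{\,D}$, then specialization to symmetric $2$-tensors, to $\xi=\ds f$ via $\Lies_Z\ds f=\ds(Zf)$, and finally the trace with $\Lies_Z\gs^{AB}=-{}^{(Z)}\pis^{AB}$) is exactly the standard argument and assembles into the stated formulas. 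Two small remarks. First, the restriction step $\slashed{(\Lie_Zg)}_{AB}=(\Lies_Z\gs)_{AB}$ uses that $[Z,X_A]$ is tangent to $S_{\mathfrak t,u}$ for $Z\in\{L,T,R_i\}$, which holds in the optical coordinates since $L=\p_{\mathfrak t}$ and $T=\p_u-\Xi$; it is worth saying explicitly. Second, the case $Z=\rho L$ that you flag as the main obstacle is in fact immediate: $\rho=\mathfrak t-u$ is constant on each $S_{\mathfrak t,u}$, so $\ds_A\rho=0$ and the Leibniz corrections $(\ds_A\rho)L_B+(\ds_B\rho)L_A$ vanish, giving $\Lies_{\rho L}\gs_{AB}=\rho\,\Lies_L\gs_{AB}={}^{(\rho L)}\pis_{AB}$ with no reabsorption needed. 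The only caution is the usual sign bookkeeping in $[\nas_A,\Lies_Z]$ versus $\Lies_Z\nas_A-\nas_A\Lies_Z$, which your final formulas handle consistently with the lemma as stated.
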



\vskip 0.2 true cm

{\bf 2.2.8. Covariant wave equations and structure equations}

\vskip 0.2 true cm
We now look for the equation of $\varphi_{\lambda}$ $(\lambda=0,1,2,3)$ under the action of the covariant wave operator $\Box_g=g^{\alpha\beta}\mathscr{D}_{\alpha\beta}^2$ with the help of metric and Christoffel symbol.
 It follows from direct computation that
 \begin{equation}\label{Y-6}
  \begin{split}
   \Box_g\varphi_{\lambda}&=g^{\alpha\beta}\partial_{\alpha\beta}^2\varphi_{\lambda}
   -g^{\alpha\beta}\Gamma_{\alpha\beta}^{\gamma}\partial_{\gamma}\varphi_{\lambda}\\
   &=(-\frac{1}{c^2}\partial_t^2\varphi_{\lambda}+\Delta\varphi_{\lambda})-\Ga^{\ga}\p_{\ga}\vp_{\la}.
  \end{split}
 \end{equation}
In addition, taking the derivative on two sides of \eqref{main equation} with respect to the variable $x^{\lambda}$ derives
 \begin{equation}\label{Y-7}
  -\frac{1}{c^2}\partial_t^2\varphi_{\lambda}+\Delta\varphi_{\lambda}=p\varphi_0^{p-1}\partial_t\varphi_0\partial_t\varphi_{\lambda}.
 \end{equation}
By $\partial_t=L+c^2\mu^{-1}T$ and $\p_i=c^2\mu^{-2}T^iT+\ds^Ax^iX_A$, then it follows from \eqref{Y-6} and \eqref{Y-7} that
 \begin{equation}\label{covariant wave equation}
  \mu\Box_g\varphi_{\lambda}=F_{\la},
 \end{equation}
where
 \begin{equation}\label{F la}
  F_{\la}=\frac{1}{2}p\mu\vp_0^{p-1}L\vp_0L\vp_{\la}+\frac{1}{2}pc^2\vp_0^{p-1}L\vp_0T\vp_{\la}+\frac{1}{2}pc^2\vp_0^{p-1}T\vp_0L\vp_{\la}
  -\frac{1}{2}pc^2\mu\vp_0^{p-1}\ds\vp_0\cdot\ds\vp_{\la}.
 \end{equation}
On the other hand, due to
 \begin{align*}
  \mu\Box_g\vp_\lambda&=\mu g^{\al\be}\D_{\al\be}^2\vp_\lambda=-(L+\frac{1}{2}\trgs\chi)\Lb\vp_\lambda+2c^{-1}\mu\ds c\cdot\ds\vp_\lambda+\mu\Des\vp_\lambda+\frac{1}{2}c^{-2}\mu\trgs\chi L\vp_\lambda,
 \end{align*}
this yields
 \begin{equation}\label{transport equation of Lb vp0}
  (L+\frac{1}{2}\trgs\chi)\Lb\vp_\lambda=\mu\Des\vp_\lambda+H_\lambda-F_\lambda,
 \end{equation}
where
 \begin{equation}\label{H0}
  H_\lambda=\frac{1}{2}c^{-2}\mu\trgs\chi L\vp_\lambda+2c^{-1}\mu\ds c\cdot\ds\vp_\lambda.
 \end{equation}
For convenience, define
 \begin{equation}\label{H'0}
  \tilde H_\lambda=2c^{-1}\mu\ds c\cdot\ds\vp_\lambda.
 \end{equation}

Next, we give the structure equations of $\mu$, $\chi$, $L^i$ and $\check{L}^i$.
\begin{lemma}\label{3.6}
It holds that
 \begin{equation}\label{transport equation of mu}
  L\mu=-cTc+(c^{-1}Lc)\mu,
 \end{equation}
 \begin{equation}\label{transport equation of chi}
  L\chi_{AB}=c^{-1}(Lc)\chi_{AB}+\chi_A^C\chi_{BC}-\check{\R}_{LALB},
 \end{equation}
 \begin{equation}\label{transport equation of chi along T}
  \begin{split}
   \Lies_T\chi_{AB}
   &=-c^{-1}L(c^{-1}\mu)\chi_{AB}+\frac{1}{2}c^{-1}\mu(\si_{AC}\chi_B^C+\si_{BC}\chi_A^C)\\
   &\quad+\frac{1}{2}(\nas_A\et_B+\nas_B\et_A)+\frac{1}{2}\mu^{-1}(\ze_A\et_B+\ze_B\et_A),
  \end{split}
 \end{equation}
 \begin{equation}\label{elliptic equation of chi}
  (\divgs\chi)_A=\ds_A\trgs\chi+c^{-1}(\ds^Bc)\chi_{AB}-c^{-1}(\ds_Ac)\trgs\chi,
 \end{equation}
 \begin{equation}\label{transport equation of L^i}
  LL^i=c^{-1}(Lc)\check{L}^i+c^{-1}(Lc)\rho^{-1}x^i-c\ds x^i\cdot\ds c,
 \end{equation}
 \begin{equation}\label{TLi}
 TL^i=(-c^{-1}\mu\ds_Ac+\ds_A\mu)\ds^Ax^i-c^{-1}L(c^{-1}\mu)L^i,
 \end{equation}
 \begin{equation}\label{structure equation of L^i small}
  \ds_A\check{L}^i=\check{\chi}_{AB}\ds^Bx^i.
 \end{equation}
\end{lemma}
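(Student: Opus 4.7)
The plan is to derive each identity from the eikonal equation, the relations $\mu=c^2/\p_tu$ and $L^\alpha=-\mu g^{\alpha\beta}\p_\beta u$, the connection identities of Lemma \ref{Lemma connection coefficients}, and the observation that for the metric $g=\mathrm{diag}(-c^2,1,1,1)$ the only nonvanishing Christoffel symbol with an upper spatial index is $\Gamma^i_{00}=c\p_ic$. For \eqref{transport equation of mu}, I apply $\Lr^\beta\p_\beta$ to the eikonal equation to obtain $\Lr(\p_\gamma u)=\tfrac12(\p_\gamma g^{\alpha\beta})\p_\alpha u\,\p_\beta u$; specializing to $\gamma=0$ and using that only $\p_tg^{00}=2c^{-3}\p_tc$ contributes gives $L\p_tu=c\p_tc/\mu$, after which differentiating $\mu=c^2/\p_tu$ along $L$ by the quotient rule and replacing $\p_tc=Lc+c^2\mu^{-1}Tc$ (via $\p_t=L+c^2\mu^{-1}T$, a consequence of $\Tr=c^{-1}(\p_t-L)$ and $T=c^{-1}\mu\Tr$) collapses to the stated $L\mu=\mu c^{-1}Lc-cTc$. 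For \eqref{transport equation of chi}, the identity $[L,X_A]=0$ (which holds because $X_A$ is extended by $L\vartheta^A=0$) gives $\D_L\D_AL=\D_A\D_LL+\R(L,X_A)L$; combined with $\D_LL=\mu^{-1}(L\mu)L$, $g(\D_AL,L)=0$, and $\D_LX_B=\D_BL$ from Lemma \ref{Lemma connection coefficients}, one obtains $L\chi_{AB}=\mu^{-1}(L\mu)\chi_{AB}+\chi_A^C\chi_{BC}-\R_{LALB}$, and substituting \eqref{transport equation of mu} together with the definition \eqref{RLALB small} of $\check\R_{LALB}$ completes the derivation. The Codazzi identity \eqref{elliptic equation of chi} follows from $\nas_A\chi_{BC}-\nas_B\chi_{AC}=-g(\R(X_A,X_B)L,X_C)$ contracted over $\gs^{BC}$; the antisymmetric curvature piece drops out, and only the conformal-factor corrections $c^{-1}\ds c$ survive, through the relation $\sigma_{AB}=-c^{-1}\chi_{AB}$.

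For \eqref{transport equation of L^i}, I read off the $i$-th Cartesian component of $\D_LL=\mu^{-1}(L\mu)L$: since $L^0=1$ and only $\Gamma^i_{00}=c\p_ic$ is nonzero, $(\D_LL)^i=LL^i+c\p_ic$. Decomposing $\p_ic=c^2\mu^{-2}T^iTc+\ds^Ax^i\ds_Ac$ in the null frame and using $L^i=-c^2\mu^{-1}T^i$ (which follows from $T=c^{-2}\mu(\p_t-L)$), the $Tc$-term precisely cancels the $-\mu^{-1}cTc\cdot L^i$ contribution coming from \eqref{transport equation of mu}, leaving the stated identity. For \eqref{TLi} and \eqref{structure equation of L^i small}, both $T$ and $X_A$ satisfy $V^0=0$, so the Christoffel correction vanishes identically and the identities follow by reading off the Cartesian components of $\D_TL$ and $\D_AL$ from Lemma \ref{Lemma connection coefficients}, together with $X_A^i=\ds_Ax^i$ and the splitting $\chi_{AB}=\check\chi_{AB}+\rho^{-1}\gs_{AB}$.

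The main obstacle is \eqref{transport equation of chi along T}. I expand $\Lies_T\chi_{AB}=T\chi_{AB}-\chi([T,X_A],X_B)-\chi(X_A,[T,X_B])$, rewrite $T\chi_{AB}=g(\D_T\D_AL,X_B)+g(\D_AL,\D_TX_B)$ by $\D$-compatibility, and invoke the Ricci identity $\D_T\D_AL=\D_A\D_TL+\R(T,X_A)L+\D_{[T,X_A]}L$; the $[T,X_A]$-type terms exactly cancel the $\chi([T,X_A],X_B)$-type terms, and the curvature term $g(\R(T,X_A)L,X_B)$ vanishes because of the specific form \eqref{R0i0j} of our Riemann tensor combined with $T^0=X_A^0=X_B^0=0$. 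Substituting $\D_TL=\eta^CX_C-c^{-1}L(c^{-1}\mu)L$ and $\D_BT=\mu^{-1}\eta_BT-c^{-2}\mu\chi_B^CX_C$, and unraveling $\D_A(\eta^CX_C)$ into the sphere-covariant form via $\nas_A\eta_B=X_A\eta_B-\slashed\Gamma_{AB}^C\eta_C$, one finds $g(\D_A\D_TL,X_B)=\nas_A\eta_B-c^{-1}L(c^{-1}\mu)\chi_{AB}$ and $g(\D_AL,\D_BT)=\mu^{-1}\zeta_A\eta_B+c^{-1}\mu\sigma_{BC}\chi_A^C$ (the latter via $\sigma=-c^{-1}\chi$). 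Since $\Lies_T\chi_{AB}$ is symmetric in $(A,B)$, taking the symmetric part produces precisely the four terms $-c^{-1}L(c^{-1}\mu)\chi_{AB}$, $\tfrac12 c^{-1}\mu(\sigma_{AC}\chi_B^C+\sigma_{BC}\chi_A^C)$, $\tfrac12(\nas_A\eta_B+\nas_B\eta_A)$, and $\tfrac12\mu^{-1}(\zeta_A\eta_B+\zeta_B\eta_A)$ as required. The bookkeeping for the symmetrizations and the interplay of $c$- and $\mu$-dependent factors is where the most care is needed.
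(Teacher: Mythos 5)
Your derivations of \eqref{transport equation of mu}--\eqref{TLi} are sound and, where the paper supplies details at all, coincide with its route: for the first four identities the paper simply cites (2.32)--(2.36) of \cite{MY17}, while for \eqref{transport equation of L^i} and \eqref{TLi} it writes exactly your computation $LL^i=\D_LL^i-L^{\al}L^{\be}\Ga_{\al\be}^i=\mu^{-1}(L\mu)L^i-c\p_ic$ followed by the same cancellation of the $Tc$-contribution via $L^i=-c^2\mu^{-1}T^i$, so your argument is a genuine (and welcome) filling-in of steps the paper outsources. Your symmetrization step for \eqref{transport equation of chi along T} is legitimate because $\Lies_T\chi$ is symmetric, so replacing the unsymmetrized right-hand side by its symmetric part is free. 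One small mis-attribution: in the Codazzi identity \eqref{elliptic equation of chi} the corrections $c^{-1}\ds c$ enter through $\ze_A=-c^{-1}\mu\ds_Ac$ (i.e.\ through the $L$-component of $\D_AL$ and the $T$-component of $\D_AX_B$), not through $\si_{AB}=-c^{-1}\chi_{AB}$; this does not change the outcome.

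The genuine gap is in \eqref{structure equation of L^i small}. Carrying out your prescription literally, the $i$-th Cartesian component of $\D_AL=-\mu^{-1}\ze_AL+\chi_A^BX_B$ (the Christoffel correction does vanish, as you say, since $X_A^0=0$ and $\Ga^i_{j\nu}=0$) gives $\ds_AL^i=-\mu^{-1}\ze_AL^i+\chi_A^B\ds_Bx^i$, hence $\ds_A\check{L}^i=c^{-1}(\ds_Ac)L^i+\check{\chi}_{AB}\ds^Bx^i$ after subtracting $\rho^{-1}\ds_Ax^i$. The term $c^{-1}(\ds_Ac)L^i=-(\ds_Ac)\Tr^i$ is not zero by \eqref{zeta}, and your proposal does not say where it goes; your method as written terminates at an identity differing from the stated one by this term. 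The paper's own proof does not go through Lemma \ref{Lemma connection coefficients} here: it invokes (4.10c) of \cite{Sp16} and checks that the $G$-function contractions appearing there ($\slashed G_{AB}^{\al}$, $\slashed G_{LA}^{\al}$, $G_{L\Tr}^{\al}$, $G_{\Tr\Tr}^{\al}$) all vanish because $X_A^0=\Tr^0=0$ while the only nonzero component of $G$ is $G_{00}^0$. To complete your argument you must either exhibit a cancellation of the $\ze_A$-contribution within the frame computation or reconcile it with that formula; failing that, you should at least record that the discrepant term is $O(\de^{(1-\ve0)p}\mathfrak t^{-(p+1)})$, hence harmless for every subsequent estimate in which \eqref{structure equation of L^i small} is used.
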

\begin{proof}
Since \eqref{transport equation of mu}-\eqref{elliptic equation of chi} are completely similar to (2.32)-(2.36) in \cite{MY17},
we omit the details here.

With the help of Lemma \ref{Lemma connection coefficients} and the fact $\p_i=c^2\mu^{-2}T^iT+\ds^Ax^iX_A$,
one has that by \eqref{T}, Definition \ref{Definition error vectors and trace-free vectors}, Lemma \ref{Lemma connection coefficients} and \eqref{transport equation of mu},
 \begin{equation*}
  \begin{split}
   LL^i&=\D_LL^i-L^{\al}L^{\be}\Ga_{\al\be}^i=\mu^{-1}L\mu L^i-c\p_ic\\
   &=c^{-1}(Lc)\check{L}^i+c^{-1}(Lc)\rho^{-1}x^i-c\ds x^i\cdot\ds c.
  \end{split}
 \end{equation*}
Analogously, \eqref{TLi} can be obtained by $TL^i=\D_TL^i-L^{\al}T^{j}\Ga_{\al j}^i$
and Lemma \ref{Lemma connection coefficients}.

Finally, by (4.10c) in \cite{Sp16} and Definition \ref{Definition G function}, we arrive at
 \begin{equation*}
  \begin{split}
   \ds_A\check{L}^i
   &=\gs^{BC}\check{\chi}_{AB}\ds_Cx^i+\gs^{BC}(-\frac{1}{2}\slashed{G}_{AB}^{\al}L\vp_{\al}
   +\frac{1}{2}\slashed{G}_{LA}^{\al}\ds_B\vp_{\al}-\frac{1}{2}\slashed{G}_{LB}^{\al}\ds_A\vp_{\al})\ds_Cx^i\\
   &\quad+(-G_{L\Tr}^{\al}\ds_A\vp_{\al}-\frac{1}{2}G_{\Tr\Tr}^{\al}\ds_A\vp_{\al})\Tr^i\\
   &=\check{\chi}_{AB}\ds^Bx^i.
  \end{split}
 \end{equation*}
\end{proof}

\section{Bootstrap assumptions and lower order derivative estimates}\label{Section 3}

To show the global existence of solution $\phi$ to equation$\eqref{main equation}$ near $C_0$, we will utilize the bootstrap argument. For this purpose, we make the following bootstrap assumptions in $D^{\mathfrak t,u}$
\begin{equation}\label{BA}
  \begin{split}
      \de^l\|LZ^{\al}\vp_{\ga}\|_{L^{\infty}(\Si_\mathfrak t^u)}&\leq M\de^{1-\ve0}\mathfrak t^{-2},\\
   \de^l\|\ds Z^{\al}\vp_{\ga}\|_{L^{\infty}(\Si_\mathfrak t^u)}&\leq M\de^{1-\ve0}\mathfrak t^{-2},\\
      \de^l\|TZ^{\al}\vp_{\ga}\|_{L^{\infty}(\Si_\mathfrak t^u)}&\leq M\de^{-\ve0}\mathfrak t^{-1},\\
             \|\nas^2\vp_{\ga}\|_{L^{\infty}(\Si_\mathfrak t^u)}&\leq M\de^{1-\ve0}\mathfrak t^{-3},\\
                   \|\vp_{\ga}\|_{L^{\infty}(\Si_\mathfrak t^u)}&\leq M\de^{1-\ve0}\mathfrak t^{-1},
  \end{split}
 \end{equation}
where $|\alpha|\leqslant N$, $N$ is a large positive integer, $M$ is some positive number which is
suitably chosen (at least double bounds of the corresponding quantities on time $t_0$), $
Z\in\{\rho L,T,R_1,R_2,R_3\}$, $l$ is the number of $T$ included in $Z^{\alpha}$.

As $L\mu=\mu c^{-1}Lc-cTc$ by \eqref{transport equation of mu}, then $\|L\mu\|_{L^{\infty}(\Si_\mathfrak t^u)}\lesssim M^p\de^{(1-\ve0)p-1}\mathfrak t^{-p}+M^p\de^{(1-\ve0)p}\mathfrak t^{-(p+1)}\mu$ from \eqref{BA}, which deduces that by
integrating $L\mu$ along integral curves of $L$,
\begin{equation}\label{estimate of mu}
\|\mu-1\|_{L^{\infty}(\Si_\mathfrak t^u)}\lesssim M^p\de^{(1-\ve0)p-1}.
\end{equation}
Hence,
\begin{equation}\label{estimate of L mu}
\|L\mu\|_{L^{\infty}(\Si_\mathfrak t^u)}\lesssim M^p\de^{(1-\ve0)p-1}\mathfrak t^{-p}.
\end{equation}
In addition, taking the $i$-th component on both sides of $\p_i=c^2\mu^{-2}T^iT+\ds^Ax^iX_A$ yields
\begin{equation*}
\begin{split}
1=c^2\mu^{-2}|T^i|^2+\ds^Ax^i\ds_Ax^i=c^2\mu^{-2}|T^i|^2+|\ds x^i|^2,
\end{split}
\end{equation*}
which immediately gives
\begin{equation}\label{estimate of ds x^i and gs}
|\ds x^i|\lesssim1.
\end{equation}

Similarly to the estimate of $\mu$ in \eqref{estimate of mu}, $\check\chi$ can also be estimated by integrating $L\check\chi$
along integral curves of $L$.
\begin{proposition}
For sufficiently small $\de>0$, it holds that
 \begin{equation}\label{estimate of chi small}
  \|\chic\|_{L^{\infty}(\Si_\mathfrak t^u)}\lesssim M^p\de^{(1-\ve0)p}\mathfrak t^{-2}.
 \end{equation}
\end{proposition}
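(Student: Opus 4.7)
The plan is to derive a transport equation for $\check\chi$ along $L$, bound the source terms using the bootstrap assumptions \eqref{BA}, and then integrate from the initial sphere where $\check\chi$ vanishes.

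\textbf{Step 1 (Transport equation for $\check\chi$).} Starting from the transport equation $L\chi_{AB}=c^{-1}(Lc)\chi_{AB}+\chi_A^C\chi_{BC}-\check{\mathscr{R}}_{LALB}$ in Lemma \ref{3.6}, I use $L\rho=Lt-Lu=1$ together with $L\slashed{g}_{AB}=\slashed{\mathcal{L}}_L\slashed g_{AB}={}^{(L)}\slashed\pi_{AB}=2\chi_{AB}$ from \eqref{L pi} and Lemma \ref{Lemma commutator [Des,Z]} to compute
\[
L(\rho^{-1}\slashed g_{AB})=-\rho^{-2}\slashed g_{AB}+2\rho^{-1}\chi_{AB}.
\]
Subtracting this from $L\chi_{AB}$ and writing $\chi_{AB}=\check\chi_{AB}+\rho^{-1}\slashed g_{AB}$ on the right-hand side, several algebraic terms telescope (the $\rho^{-2}\slashed g_{AB}$ contributions and the $\rho^{-1}\check\chi_{AB}$ cross-terms cancel), leaving the clean equation
\begin{equation*}
L\check\chi_{AB}=c^{-1}(Lc)\check\chi_{AB}+\check\chi_A^{\,C}\check\chi_{BC}+\rho^{-1}c^{-1}(Lc)\slashed g_{AB}-\check{\mathscr{R}}_{LALB}.
\end{equation*}

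\textbf{Step 2 (Estimating the forcing).} Since $Lc=-\tfrac{p}{2}c^3\varphi_0^{p-1}L\varphi_0$, the bootstrap bounds on $\varphi_0$ and $L\varphi_0$ in \eqref{BA} give $|c^{-1}Lc|\lesssim M^p\delta^{(1-\varepsilon_0)p}\mathfrak t^{-(p+1)}$. An inspection of \eqref{RLALB small} together with the bounds on $\varphi_0,\ds\varphi_0$ and $\nas^2\varphi_0$ furnishes $|\check{\mathscr{R}}_{LALB}|\lesssim M^p\delta^{(1-\varepsilon_0)p}\mathfrak t^{-p}\cdot(\text{size of }\slashed g_{AB})$, which translates, in the invariant $\slashed g$-norm, into a decay of $M^p\delta^{(1-\varepsilon_0)p}\mathfrak t^{-p-2}$. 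The linear term $\rho^{-1}c^{-1}(Lc)\slashed g_{AB}$ carries the same invariant size, since $\rho\sim\mathfrak t$ in $D^{\mathfrak t,u}$.

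\textbf{Step 3 (Initial data and integration).} On $\Sigma_{t_0}^{4\delta}$ one has $\chi_{AB}=r^{-1}\slashed g_{AB}$ and, along the initial hypersurface $t=t_0$, $\rho=t_0-u=r$; hence $\check\chi_{AB}(t_0,u,\vartheta)=0$. Integrating the transport equation along the integral curves of $L$ (parametrised by $\mathfrak t$, with $u$ and $\vartheta$ fixed) and treating the $c^{-1}(Lc)\check\chi_{AB}+\check\chi_A^{\,C}\check\chi_{BC}$ terms by a standard Gr\"onwall argument (their coefficients are integrable in $\mathfrak t$ with smallness $M^p\delta^{(1-\varepsilon_0)p}$), I obtain
\[
\|\check\chi\|_{L^\infty(\Sigma_{\mathfrak t}^u)}\lesssim\int_{t_0}^{\mathfrak t}\bigl(\rho^{-1}|c^{-1}Lc|+|\check{\mathscr{R}}_{LALB}|_{\slashed g}\bigr)\,d\tau\lesssim M^p\delta^{(1-\varepsilon_0)p}\mathfrak t^{-2},
\]
where the $\mathfrak t^{-2}$ follows because the source decays like $\tau^{-p-2}$ and is integrated from $t_0\sim 1$, with $p\ge 2$ providing enough decay.

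\textbf{Main obstacle.} The delicate point is not the integration itself but the bookkeeping in Step 1 that yields the precise cancellation structure: without it, a spurious $\rho^{-1}\check\chi$ term would survive and destroy both the optimal $\delta^{(1-\varepsilon_0)p}$ smallness and the $\mathfrak t^{-2}$ decay. One must also verify that $\check{\mathscr{R}}_{LALB}$ genuinely decays like $\mathfrak t^{-(p+2)}$ in the invariant norm, which depends crucially on the angular and $\nas^2$ bootstrap estimates of $\varphi_0$ rather than its $T$-derivative, and on the fact that the bad $\mu^{-1}c(Tc)\check\chi_{AB}$ and $\mu^{-1}\rho^{-1}c(Tc)\slashed g_{AB}$ contributions have already been removed by the definition \eqref{RLALB small}.
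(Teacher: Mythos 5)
Your proof is correct and follows essentially the same route as the paper: the same telescoping computation leading to the transport equation \eqref{transport equation of chi small} for $\check\chi$, the same source bounds $|c^{-1}Lc|\lesssim M^p\delta^{(1-\varepsilon_0)p}\mathfrak t^{-(p+1)}$ and $|\check{\mathscr R}_{LALB}|\lesssim M^p\delta^{(1-\varepsilon_0)p}\mathfrak t^{-(p+2)}$ from \eqref{BA}, and integration of the weighted quantity $\rho^2|\check\chi|$ along $L$ with Gronwall. The only minor discrepancy is in the initial data: the paper takes $\check\chi_{AB}|_{t_0}=(c-1)r^{-1}\slashed g_{AB}$, which is $O(M^p\delta^{(1-\varepsilon_0)p})$ rather than exactly zero, but since $t_0\sim 1$ this does not affect the conclusion.
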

\begin{proof}
Substituting $\chi_{AB}=\chic_{AB}+\rho^{-1}\gs_{AB}$ into $\eqref{transport equation of chi}$, in view of $L\gs_{AB}=2\chi_{AB}$, one has
 \begin{equation}\label{transport equation of chi small}
   L\chic_{AB}=c^{-1}(Lc)\chic_{AB}+\chic_A^C\chic_{BC}+c^{-1}(Lc)\rho^{-1}\gs_{AB}-\check{\R}_{LALB},
 \end{equation}
and then,
 \begin{equation}\label{weighted transport equation of |chi small|^2}
  L(\rho^4|\chic|^2)=2\rho^4(c^{-1}Lc|\chic|^2-\chic_A^B\chic_B^C\chic_C^A+\rho^{-1}c^{-1}Lc\trgs\chic-\chic^{AB}\check{\R}_{LALB}).
 \end{equation}
Note that $\check{\R}_{LALB}$'s explicit expression has been given in \eqref{RLALB small} and \eqref{RLALB}.
Then by \eqref{BA} and \eqref{estimate of ds x^i and gs}, we obtain
\begin{equation}\label{HC-2}
|L(\rho^2|\chic|)|
\lesssim M^p\de^{(1-\ve0)p}\mathfrak t^{-p}+M^p\de^{(1-\ve0)p}\mathfrak t^{-2}\cdot\rho^2|\chic|+\rho^2|\check\chi|^2.
\end{equation}

Due to
 \begin{equation}\label{estimate of chi small t0}
   |\chic_{AB}|_{t_0}|=|(c-1)\frac{1}{r}\gs_{AB}|_{t_0}|\lesssim M^p\de^{(1-\ve0)p},
 \end{equation}
then integrating \eqref{HC-2} along integral curves of $L$ from $t_0$ to $\mathfrak t$ yields
 \begin{equation*}
  \begin{split}
   |\chic|
   &\lesssim M^p\de^{(1-\ve0)p}\mathfrak t^{-2}.
  \end{split}
 \end{equation*}
\end{proof}

With the help of \eqref{estimate of chi small}, we can estimate $\ds\mu$.
\begin{proposition}
For sufficiently small $\de>0$, it holds that
 \begin{align}
    &\|\ds\mu\|_{L^{\infty}(\Si_\mathfrak t^u)}\lesssim M^p\de^{(1-\ve0)p-1}\mathfrak t^{-1}.\label{estimate of ds mu}
 \end{align}
\end{proposition}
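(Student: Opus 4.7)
The desired estimate $|\ds\mu|\lesssim M^p\de^{(1-\ve0)p-1}\mathfrak t^{-1}$ carries a $\mathfrak t^{-1}$ factor that a direct $L$-transport argument on $\ds\mu$ will not produce: commuting $L$ with the coordinate derivative $X_A=\partial/\partial\vartheta^A$ (which satisfies $[L,X_A]=0$) and integrating along $L$-curves only yields a $\mathfrak t$-uniform bound of size $M^p\de^{(1-\ve0)p-1}$, with no decay. To recover the $\mathfrak t^{-1}$ decay, I would instead work with the rotation vector fields $R_i$ from \eqref{Ri}, which are tangent to the spheres $S_{\mathfrak t,u}$ with $|R_i|_{\gs}\sim\rho\sim\mathfrak t$. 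Since $\{R_1,R_2,R_3\}$ spans the tangent plane of $S_{\mathfrak t,u}$ (up to the smallness of the $v_i$), one has the pointwise equivalence $|\ds f|\lesssim\rho^{-1}\sum_{i=1}^{3}|R_if|$, so the proposition reduces to proving the $\mathfrak t$-uniform bound $|R_i\mu|\lesssim M^p\de^{(1-\ve0)p-1}$ on $D^{\mathfrak t,u}$.

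Commuting $R_i$ with the transport equation \eqref{transport equation of mu} yields
$$L(R_i\mu)=(c^{-1}Lc)R_i\mu-R_i(cTc)+\mu R_i(c^{-1}Lc)+[L,R_i]\mu.$$
Using the explicit formulas $Tc=-\frac{p}{2}c^3\vp_0^{p-1}T\vp_0$ and $Lc=-\frac{p}{2}c^3\vp_0^{p-1}L\vp_0$, each source term is expanded into expressions linear in $R_i\vp_0$, $R_iT\vp_0$, $R_iL\vp_0$. These are controlled by the bootstrap assumption \eqref{BA} with $Z^\alpha=R_i$ (using $|R_i\vp_0|\lesssim\rho|\ds\vp_0|\lesssim M\de^{1-\ve0}\mathfrak t^{-1}$ for the tangentiality, and the identity $R_iT\vp_0=TR_i\vp_0+[R_i,T]\vp_0$ with the commutator expanded in the null frame via the components of ${}^{(R_i)}\pi$ from \eqref{Ri pi} and bounded through \eqref{estimate of mu}, \eqref{estimate of chi small}, and Lemma \ref{Lemma relation of error vectors and trace-free vectors}). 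This produces the pointwise bounds $|R_i(cTc)|\lesssim M^p\de^{(1-\ve0)p-1}\mathfrak t^{-p}$ and $|\mu R_i(c^{-1}Lc)|\lesssim M^p\de^{(1-\ve0)p}\mathfrak t^{-p-1}$, both $L^1$-integrable in $\tau$ on $[t_0,\mathfrak t]$ with integrals uniformly bounded by $M^p\de^{(1-\ve0)p-1}$ (using $p\ge 2$).

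Integrating the commuted transport equation along integral curves of $L$ from $t_0$, with initial datum $|(R_i\mu)|_{t_0}|=|(R_ic)|_{t_0}|\lesssim M^p\de^{(1-\ve0)p}$ (which is $\de$-smaller than, hence negligible against, the target), one arrives at the uniform bound $|R_i\mu|\lesssim M^p\de^{(1-\ve0)p-1}$, and the conclusion then follows from $|\ds\mu|\lesssim\rho^{-1}\sum_i|R_i\mu|$. The main technical obstacle is the self-referential character of the commutator $[L,R_i]\mu$: when expanded in the null frame, its coefficients involve components of ${}^{(R_i)}\pi$ such as $^{(R_i)}\pi_{LT}=-R_i\mu$ (see \eqref{Ri pi}), which feed $R_i\mu$ back into the right-hand side alongside the linear term $(c^{-1}Lc)R_i\mu$. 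Both are handled via Grönwall's inequality, whose relevant coefficients have $L^1$-in-$\tau$ norm $\lesssim M^p\de^{(1-\ve0)p}$, hence arbitrarily small for small $\de$, so no powers of $\de$ or $\mathfrak t$ are lost in the argument.
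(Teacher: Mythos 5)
Your proposal is correct in substance, but it takes a genuinely different route from the paper's. The paper \emph{does} run a direct $L$-transport argument on $\ds\mu$ --- the option you dismiss at the outset --- only it does so on the weighted intrinsic norm: from $\Lies_L\ds\mu=\ds L\mu$ it derives \eqref{weighted transport equation of |ds mu|^2} for $L(\rho^2|\ds\mu|^2)$, and the $\mathfrak t^{-1}$ decay appears automatically because $\Lies_L\gs^{AB}=-2\chi^{AB}=-2\rho^{-1}\gs^{AB}-2\chic^{AB}$, so the resulting $-2\rho^{-1}|\ds\mu|^2$ exactly offsets the growth of the weight $\rho^{2}$, leaving only the integrable source $\rho^{2}\big(\ds(cTc)+\mu\ds(c^{-1}Lc)\big)\cdot\ds\mu$ and a Gronwall term with coefficient $M^p\de^{(1-\varepsilon_0)p}\mathfrak t^{-2}$. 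Your opening claim is accurate only for the coordinate components $\partial_{\vartheta^A}\mu$, which indeed do not decay; the $\gs$-norm does, for exactly the same geometric reason ($\gs^{AB}\sim\rho^{-2}$) that underlies your reduction $|\ds f|\lesssim\rho^{-1}\sum_i|R_if|$. Your alternative --- prove the $\mathfrak t$-uniform bound $|R_i\mu|\lesssim M^p\de^{(1-\varepsilon_0)p-1}$ by commuting $R_i$ through \eqref{transport equation of mu} --- is precisely the mechanism the paper uses later for the higher-order estimate \eqref{Rmu} via \eqref{commute Ri^al with L mu}, so it is certainly viable. What it costs is that you must first control $[L,R_i]={}^{(R_i)}\pis_L{}^{A}X_A$, hence $\chic$, $\check L^j$ and $v_i$; these appear in Part 1 of Lemma \ref{Lem4.1}, which in the paper comes \emph{after} this proposition, but its proof uses only \eqref{estimate of chi small} and \eqref{BA} and not $\ds\mu$, so the reordering is harmless. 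The paper's weighted-norm computation needs only $\chic$ and is shorter.

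Two small corrections to your accounting of the self-reference. First, the Lie bracket $[L,R_i]$ annihilates both $t$ and $u$, so it has no $L$ or $T$ component; the coefficient that feeds back is not ${}^{(R_i)}\pi_{LT}=-R_i\mu$ but rather $[L,R_i]\mu={}^{(R_i)}\pis_L{}^{A}\ds_A\mu\lesssim M^p\de^{(1-\varepsilon_0)p}\mathfrak t^{-2}\sum_j|R_j\mu|$, which couples the three quantities $R_j\mu$, so Gronwall must be run on $\sum_j|R_j\mu|$ simultaneously (the coefficient is $L^1$ in $\tau$ with small norm, as you say). Second, a further self-referential term hides in $R_i(cTc)$ through $R_iT\vp_0=TR_i\vp_0-{}^{(R_i)}\pis_T{}^{A}\ds_A\vp_0$, because by \eqref{Ri pi} the component ${}^{(R_i)}\pis_{TA}$ contains $v_i\ds_A(c^{-1}\mu)$; its contribution carries the prefactor $\vp_0^{p-1}v_i\ds\vp_0=O(\de^{2p(1-\varepsilon_0)}\mathfrak t^{-(p+1)})$ and is absorbed the same way. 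Neither point breaks the argument.
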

\begin{proof}
By \eqref{transport equation of mu}, one has
 \begin{equation}\label{HC-3}
  \begin{split}
   \Lies_L\ds\mu&=\ds L\mu=c^{-1}Lc\ds\mu+\ds(cTc)+\mu\ds(c^{-1}Lc).
  \end{split}
 \end{equation}
Then
  \begin{equation}\label{weighted transport equation of |ds mu|^2}
   L(\rho^2|\ds\mu|^2)=2\rho^2\big\{-\chic^{AB}\ds_A\mu\ds_B\mu+c^{-1}Lc|\ds\mu|^2+\big(\ds_A(cTc)+\mu\ds_A(c^{-1}Lc)\big)\ds^A\mu\big\}.
 \end{equation}
Together with \eqref{estimate of ds x^i and gs}, \eqref{estimate of chi small} and \eqref{BA}, this yields
 \begin{equation*}
  L(\rho|\ds\mu|)\lesssim M^p\de^{(1-\ve0)p-1}\mathfrak t^{-p}+M^p\de^{(1-\ve0)p}\mathfrak t ^{-2}\cdot\rho|\ds\mu|.
 \end{equation*}
Thus,
 \begin{equation*}
  |\ds\mu|\lesssim M^p\de^{(1-\ve0)p-1}\mathfrak t^{-1}.
 \end{equation*}
\end{proof}

It follows from \eqref{zeta} and \eqref{eta} that
\begin{corollary}
For sufficiently small $\de>0$, one has
 \begin{equation}\label{estimate of ze and et}
  \begin{split}
       \|\ze\|_{L^{\infty}(\Si_\mathfrak t^u)}&\lesssim M^p\de^{(1-\ve0)p}\mathfrak t^{-(p+1)},\\
       \|\et\|_{L^{\infty}(\Si_\mathfrak t^u)}&\lesssim M^p\de^{(1-\ve0)p-1}\mathfrak t^{-1}.
    \end{split}
 \end{equation}
\end{corollary}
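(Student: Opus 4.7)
The plan is to deduce both estimates directly from the algebraic identities $\ze_A=-c^{-1}\mu\ds_Ac$ and $\et_A=-c^{-1}\mu\ds_Ac+\ds_A\mu$ recorded in \eqref{zeta} and \eqref{eta}. The task therefore reduces to pointwise control of the four quantities $c$, $\mu$, $\ds c$, and $\ds\mu$, each of which is already in hand from the preceding propositions and the bootstrap hierarchy \eqref{BA}.

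For the $\ze$-bound I would first compute $\ds c$ from $c=(1+\vp_0^p)^{-1/2}$ by the chain rule, obtaining $\ds c=-\tfrac{p}{2}c^3\vp_0^{p-1}\ds\vp_0$. The bootstrap assumptions \eqref{BA} give $\|\vp_0\|_{L^{\infty}(\Si_\mathfrak t^u)}\lesssim M\de^{1-\ve0}\mathfrak t^{-1}$ and $\|\ds\vp_0\|_{L^{\infty}(\Si_\mathfrak t^u)}\lesssim M\de^{1-\ve0}\mathfrak t^{-2}$, hence
\[
\|\ds c\|_{L^{\infty}(\Si_\mathfrak t^u)}\lesssim M^p\de^{(1-\ve0)p}\mathfrak t^{-(p+1)}.
\]
Combining this with $\mu\lesssim 1$ (from \eqref{estimate of mu}) and $c\lesssim 1$ (which again follows from the smallness of $\vp_0$), the identity \eqref{zeta} yields the claimed estimate for $\ze$.

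For $\et$ the identity \eqref{eta} may be rewritten as $\et_A=\ze_A+\ds_A\mu$, so the two summands can be estimated separately. The first contribution inherits the bound $M^p\de^{(1-\ve0)p}\mathfrak t^{-(p+1)}$ just established, while the second is $M^p\de^{(1-\ve0)p-1}\mathfrak t^{-1}$ by the previous proposition \eqref{estimate of ds mu}. Since the $\ds\mu$ term carries both a weaker power of $\de$ and a weaker power of $\mathfrak t^{-1}$, it dominates for small $\de$, and the stated bound on $\et$ follows.

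There is essentially no obstacle here: the whole corollary is a routine algebraic consequence of the two identities together with the quantitative control on $\mu$ and $\ds\mu$ developed earlier in this section. The one structural remark worth making is why $\ze$ admits a strictly better decay and an additional $\de^{1-\ve0}$ factor compared with $\et$: the $\ds c$ contribution to both quantities comes dressed with the extra smallness $\vp_0^{p-1}$, whereas $\ds\mu$, which only appears in $\et$, does not enjoy any such additional factor, and this gap accounts for the mismatch between the two estimates.
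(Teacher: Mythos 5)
Your proposal is correct and follows exactly the route the paper intends: the corollary is stated as an immediate consequence of the identities \eqref{zeta}--\eqref{eta}, with $\ds c$ controlled via the chain rule and the bootstrap assumptions \eqref{BA}, $\mu$ via \eqref{estimate of mu}, and $\ds\mu$ via \eqref{estimate of ds mu}. The computation of $\ds c=-\tfrac{p}{2}c^{3}\vp_0^{p-1}\ds\vp_0$ and the resulting exponent bookkeeping match the stated bounds.
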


Based on \eqref{estimate of ze and et}, we now estimate $T\mu$.
\begin{proposition}
For sufficiently small $\de>0$, it holds that
 \begin{equation}\label{estimate of T mu}
  \|T\mu\|_{L^{\infty}(\Si_\mathfrak t^u)}\lesssim M^p\de^{(1-\ve0)p-2}.
 \end{equation}
\end{proposition}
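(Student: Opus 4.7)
The plan is to derive a transport equation for $T\mu$ along the integral curves of $L$ by commuting $T$ into the transport equation \eqref{transport equation of mu}, and then integrate from $t_0$ to $\mathfrak t$. First, I would compute the commutator $[L,T]$. Using Lemma \ref{Lemma connection coefficients},
$$\mathscr{D}_L T = -\zeta^A X_A - c^{-1}L(c^{-1}\mu)L, \qquad \mathscr{D}_T L = \eta^A X_A - c^{-1}L(c^{-1}\mu)L,$$
so $[L,T] = \mathscr{D}_L T - \mathscr{D}_T L = -(\zeta^A+\eta^A)X_A$. Applying $T$ to \eqref{transport equation of mu} and substituting $L(T\mu) = T(L\mu) + [L,T]\mu$ yields
$$L(T\mu) - c^{-1}(Lc)\, T\mu = -T(cTc) + \mu\, T(c^{-1}Lc) - (\zeta^A+\eta^A)\ds_A \mu.$$

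Next I would bound each term on the right-hand side using the bootstrap assumption \eqref{BA} and the already-established estimates \eqref{estimate of mu}--\eqref{estimate of ze and et}. Since $c=(1+\varphi_0^p)^{-1/2}$, one has schematically $Tc\sim c^3\varphi_0^{p-1}T\varphi_0$ and $T^2c\sim c^3\varphi_0^{p-1}T^2\varphi_0+\text{l.o.t.}$ The dominant contribution comes from $c\, T^2 c$ inside $T(cTc)$: invoking \eqref{BA} with $Z^\alpha=T$ (so $l=1$) gives $\|T^2\varphi_0\|_{L^\infty}\leq M\delta^{-1-\varepsilon_0}\mathfrak t^{-1}$, hence
$$|T(cTc)| \lesssim |\varphi_0|^{p-1}|T^2\varphi_0| + |\varphi_0|^{p-2}|T\varphi_0|^2 \lesssim M^p\delta^{(1-\varepsilon_0)p-2}\mathfrak t^{-p}.$$
The term $\mu\, T(c^{-1}Lc)$ carries an extra $L$-derivative and is of order $M^p\delta^{(1-\varepsilon_0)p-2}\mathfrak t^{-p-1}$, while the last term is controlled by \eqref{estimate of ds mu} and \eqref{estimate of ze and et} as $M^{2p}\delta^{2(1-\varepsilon_0)p-2}\mathfrak t^{-2}$, which is strictly smaller.

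Finally, I would integrate the transport equation along the integral curves of $L$ starting from $t=t_0$. The initial value satisfies $T\mu|_{t_0}=Tc|_{t_0}$, and from Theorem \ref{Theorem local existence} together with the chain rule for $c$, one has $|T\mu|_{t_0}|\lesssim M^p\delta^{(1-\varepsilon_0)p-1}$, which is better than the claim. The coefficient $c^{-1}Lc$ in the Grönwall term is harmless since $\int_{t_0}^{\mathfrak t}|c^{-1}Lc|\,d\tau\lesssim M^p\delta^{(1-\varepsilon_0)p}$ by the bootstrap, so the integrating factor stays uniformly bounded. The only nontrivial integral in $\tau$ is $\int_{t_0}^\mathfrak t \tau^{-p}\,d\tau$, which is finite uniformly in $\mathfrak t$ precisely because $p>p_c=\frac{1}{1-\varepsilon_0}>1$. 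Combining these yields $\|T\mu\|_{L^\infty(\Sigma_\mathfrak t^u)}\lesssim M^p\delta^{(1-\varepsilon_0)p-2}$.

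The main obstacle is the term $T^2c$: since $T$ is the ``bad'' direction that costs one power of $\delta^{-1}$ per application, applying it twice to $c\sim \varphi_0^p$ produces exactly the power $\delta^{(1-\varepsilon_0)p-2}$ appearing in the statement. It is precisely this term that makes $\delta^{(1-\varepsilon_0)p-2}$ sharp in the claim, and it is the term one must track most carefully; all other contributions are either higher in $\delta$ or decay faster in $\mathfrak t$.
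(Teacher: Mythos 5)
Your proposal is correct and follows essentially the same route as the paper: commute $T$ through the transport equation \eqref{transport equation of mu} using $[L,T]=-(\zeta^A+\eta^A)X_A$ from Lemma \ref{Lemma connection coefficients}, bound the source terms (with $-T(cTc)$, i.e.\ the $T^2\varphi_0$ contribution costing $\delta^{-2}$, as the dominant one) via \eqref{BA}, \eqref{estimate of mu}, \eqref{estimate of ds mu} and \eqref{estimate of ze and et}, and conclude by Gr\"onwall along the integral curves of $L$. The only quibble is that the integrability of $\tau^{-p}$ needs merely $p\geq 2$ (assumed throughout), not $p>p_c$; this does not affect the argument.
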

\begin{proof}
It follows from \eqref{transport equation of mu} and Lemma \ref{Lemma connection coefficients} that
\begin{equation*}
  \begin{split}
   LT\mu&=TL\mu+[L,T]\mu\\
        &=c^{-1}LcT\mu+\big[-T(cTc)+\mu T(c^{-1}Lc)-(\ze^A+\et^A)\ds_A\mu\big].
  \end{split}
 \end{equation*}
In addition, by \eqref{BA}, \eqref{estimate of mu}, \eqref{estimate of ze and et}
and \eqref{estimate of ds mu}, one has
 \begin{equation*}
  |LT\mu|\lesssim M^p\de^{(1-\ve0)p}\mathfrak t^{-(p+1)}|T\mu|+M^p\de^{(1-\ve0)p-2}\mathfrak t^{-2}.
 \end{equation*}
By Gronwall's inequality, we arrive at
 \begin{equation*}
  |T\mu|\lesssim M^p\de^{(1-\ve0)p-2}.
 \end{equation*}
\end{proof}

At the last of this section, we give the lower order derivative $L^{\infty}$ estimates of some related quantities
which will be further utilized in subsequent sections.
\begin{lemma}\label{Lem4.1}
For sufficiently small $\de>0$, it holds that
 \begin{equation}\label{lower order estimate}
  \begin{split}
   &|\check{L}^j|+|\check{T}^j|+|R_i\check{L}^j|+|R_i\check{T}^j|+\rho|L\check L^j|+\rho|L\check T^j|\lesssim M^p\de^{(1-\ve0)p}\mathfrak t^{-1},\\
   &{|T\check L^j|+|T\check T^j|}\lesssim M^p\delta^{(1-\ve0)p-1}\mathfrak t^{-1},\\
   &|v_j|+|R_iv_j|+\rho|Lv_j|\lesssim M^p\de^{(1-\ve0)p},\qquad{|Tv_j|}\lesssim M^p\delta^{(1-\ve0)p},\\
   &|\Lies_{R_i}\ds x^j|+|\Lies_{\rho L}\ds x^j|\lesssim 1,\qquad|\Lies_T\ds x^j|\lesssim\mathfrak t^{-1},\\
   &|\Lies_{R_i}R_j|\lesssim \mathfrak t,\qquad|\Lies_{\rho L}R_j|\lesssim M^p\delta^{(1-\ve0)p},\qquad|\Lies_TR_j|\lesssim M^p\delta^{(1-\ve0)p}\mathfrak t^{-1},\\
   &|{}^{(R_i)}\pis|+|{}^{(R_i)}\pis_L|+|{}^{(R_i)}\pis_T|\lesssim M^p\de^{(1-\ve0)p}\mathfrak t^{-1},\\
   &|{}^{(T)}\pis|\lesssim\mathfrak t^{-1}.
  \end{split}
 \end{equation}
\end{lemma}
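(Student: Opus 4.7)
The strategy is to chain transport-ODE arguments along integral curves of $L$ with algebraic identities extracted from the structure equations of Lemma 2.11, feeding in the bootstrap assumptions \eqref{BA} together with the already-established bounds \eqref{estimate of mu}--\eqref{estimate of T mu}. First I would combine the transport equation \eqref{transport equation of L^i} with $L\rho=1$ and $Lx^i=L^i$ to derive
\[
 L\check L^i+\rho^{-1}\check L^i=c^{-1}(Lc)\check L^i+c^{-1}(Lc)\rho^{-1}x^i-c\,\ds x^i\cdot\ds c.
\]
Since \eqref{BA} forces $|Lc|+|\ds c|\lesssim M^p\de^{(1-\ve0)p}\mathfrak t^{-(p+1)}$ and $|x^i|\lesssim\mathfrak t$, the right side is $O(M^p\de^{(1-\ve0)p}\mathfrak t^{-(p+1)})$. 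Multiplying by the integrating factor $\rho$ and integrating along $L$ from $t_0$, where Theorem 2.1 yields $|\check L^i|\lesssim\de^{(1-\ve0)p}$, produces $|\check L^j|\lesssim M^p\de^{(1-\ve0)p}\mathfrak t^{-1}$ and simultaneously $\rho|L\check L^j|\lesssim M^p\de^{(1-\ve0)p}\mathfrak t^{-1}$. Lemma 2.3's identity $\check L^i=-c\check T^i+(c-1)\rho^{-1}x^i$, combined with $|c-1|\lesssim M^p\de^{(1-\ve0)p}\mathfrak t^{-p}$, transfers the bound to $\check T^j$ and $L\check T^j$; the bounds on $v_j$ and $\rho|Lv_j|$ then follow directly from $v_j=\epsilon_{jkl}x^k\check T^l$ together with $|x|\lesssim\mathfrak t$, using the antisymmetry of $\epsilon_{jkl}$ to kill the radial $x^l/\rho$ piece of $\check T^l$ that would otherwise cost a power of $\de$.

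Next I would exploit the \emph{algebraic} structure equation \eqref{structure equation of L^i small}, which gives $R_i\check L^j=R_i^A\check\chi_{AB}\ds^B x^j$ without any time integration; combined with $|R_i|\lesssim\mathfrak t$, \eqref{estimate of chi small} and \eqref{estimate of ds x^i and gs}, this delivers the sharp $|R_i\check L^j|\lesssim M^p\de^{(1-\ve0)p}\mathfrak t^{-1}$. Similarly, $T\check L^j$ is handled via equation \eqref{TLi}: after subtracting $T(x^i/\rho)=T^i/\rho+x^i/\rho^2$ (using $Tu=1$, $T\rho=-1$), the right side reduces to terms controlled by \eqref{estimate of ds mu}, \eqref{estimate of T mu} and the bootstrap bounds on $\ds c$, $Lc$. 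The corresponding bounds on $R_i\check T^j$, $T\check T^j$, $R_iv_j$, $Tv_j$ follow by applying $R_i$ or $T$ to the identities $\check L^i=-c\check T^i+(c-1)\rho^{-1}x^i$ and $v_j=\epsilon_{jkl}x^k\check T^l$, again exploiting the antisymmetry $\epsilon_{jkl}x^kx^l\equiv0$ to discard radial contributions. The Lie-derivative bounds on $\ds x^j$ are obtained from $\Lies_V\ds x^j=\ds(Vx^j)$ with $R_ix^j=\epsilon_{ijk}x^k-v_iT^j$, $(\rho L)x^j=\rho L^j$ and $Tx^j=T^j$; the bounds on $\Lies_Z R_j$ follow from $R_j=\Omega_j-v_j\Tr$ together with the standard Euclidean commutators $[L,\Omega_i]$, $[T,\Omega_i]$, $[\Omega_i,\Omega_j]=\epsilon_{ijk}\Omega_k$ (modulo terms in $\check L^i$, $\check T^i$) and the preceding steps.

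Finally, the componentwise formulas \eqref{T pi}--\eqref{Ri pi} express each piece of $^{(T)}\pis$ and $^{(R_i)}\pis$ as a polynomial in $\chi$, $\check\chi$, $\check L^j$, $\check T^j$, $v_j$, $\mu$, $c$ and their first derivatives; substituting the previously derived bounds together with \eqref{estimate of chi small}, \eqref{estimate of ds mu} and \eqref{estimate of T mu} yields the claimed estimates on the deformation tensors. The main obstacle I anticipate is preserving the optimal $\de$-power $\de^{(1-\ve0)p}$ (rather than the weaker $\de^{(1-\ve0)p-1}$) in the rotation-derivative and $x^k$-contracted quantities such as $v_j$, $R_iv_j$, $Tv_j$: this depends on two distinct mechanisms, namely the purely algebraic (as opposed to transport) nature of \eqref{structure equation of L^i small}, and the vector identity $\epsilon_{jkl}x^kx^l\equiv 0$ which eliminates the radial parts of $L^l$ and $TL^l$; analogous vigilance is required in the mixed components $^{(R_i)}\pis_L$ and $^{(R_i)}\pis_T$, whose formulas in \eqref{Ri pi} mix $\check\chi$, $\check L$, $v_i$ and $\ds c$, so that no power of $\de$ is sacrificed.
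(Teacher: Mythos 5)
Your proposal matches the paper's proof in all essentials: the weighted transport equation for $\rho\check L^i$ derived from \eqref{transport equation of L^i} plus Gronwall for $\check L^j$, the algebraic relations of Lemma \ref{Lemma relation of error vectors and trace-free vectors} and \eqref{structure equation of L^i small} for $\check T^j$ and the rotational/angular derivatives, the definition \eqref{vi} for $v_j$, and direct substitution into \eqref{T pi}--\eqref{Ri pi} for the deformation-tensor components. The only cosmetic difference is that the paper reads off $\Lies_{\rho L}R_j=\rho\,{}^{(R_j)}\pis_L$ and $\Lies_TR_j={}^{(R_j)}\pis_T$ directly instead of expanding Euclidean commutators with $\check L$, $\check T$ corrections, but the two routes are equivalent.
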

\begin{proof}
We deal with these quantities in \eqref{lower order estimate} one by one.

\vskip 0.2 true cm

\textbf{Part 1. Bounds of $\check{L}^j,\ \check{T}^j, R_i\check{L}^j, R_i\check{T}^j$, $v_j$ and $R_iv_j$}

\vskip 0.1 true cm

It is derived from Definition \ref{Definition error vectors and trace-free vectors} and \eqref{transport equation of L^i} that
\begin{equation}\label{weighted transport equation of L^i small}
\begin{split}
L(\rho\check{L}^i)=c^{-1}(Lc)\rho\check{L}^i+c^{-1}(Lc)x^i-\rho c(\ds_A x^i)\ds^A c,
\end{split}
\end{equation}
then
\begin{equation*}
|L(\rho\check{L}^i)|\lesssim M^p\de^{(1-\ve0)p}\mathfrak t^{-(p+1)}|\rho\check{L}^j|+M^p\de^{(1-\ve0)p}\mathfrak t^{-p}.
\end{equation*}
This, together with Gronwall's inequality, yields
\begin{equation*}
|\check{L}^j|\lesssim M^p\de^{(1-\ve0)p}\mathfrak t^{-1},
\end{equation*}
which also implies
\begin{equation*}
|\check{T}^j|\lesssim M^p\de^{(1-\ve0)p}\mathfrak t^{-1}
\end{equation*}
in view of Lemma \ref{Lemma relation of error vectors and trace-free vectors}.
In addition, it follows from \eqref{structure equation of L^i small}, \eqref{estimate of chi small}, \eqref{transport equation of L^i} and Lemma \ref{Lemma relation of error vectors and trace-free vectors} that
 \begin{equation*}
  |\ds\check{L}^j|+|\ds\check{T}^j|+|L\check L^j|+|L\check T^j|\lesssim M^p\de^{(1-\ve0)p}\mathfrak t^{-2}.
 \end{equation*}
Together with $R_i\sim r\ds$,
we have
 \begin{equation*}
  |R_i\check{L}^j|+|R_i\check{T}^j|\lesssim M^p\de^{(1-\ve0)p}\mathfrak t^{-1}.
 \end{equation*}
 On the other hand, since $T\check L^i=TL^i-\f{c^{-1}\mu}{\rho}\check T^i+\f{(\mu-1)x^i}{c\rho^2}+\f{c^{-1}x^i}{\rho^2}(1-c)$, then
 \begin{equation*}
 |T\check L^i|+|T\check T^i|\lesssim M^p\delta^{(1-\ve0)p-1}\mathfrak t^{-1}
 \end{equation*}
 by Lemma \ref{Lemma relation of error vectors and trace-free vectors}.
 The estimates for $v_j$, $R_iv_j$, $Lv_j$ and $Tv_j$ can be obtained directly after using the definition of $v_j$ in \eqref{vi}.

 \vskip 0.2 true cm

\textbf{Part 2. Bounds of $\Lies_{R_i}\ds x^j,, \Lies_{\rho L}\ds x^j,\ \Lies_{T}\ds x^j$ and $\Lies_{R_i}R_j$}

\vskip 0.1 true cm

Due to ${R_i}^j={\Om_i}^j-v_i\Tr^j=\ep_{ik}{}^jx^k-v_i\Tr^j$, then
 \begin{equation*}
  |{R_i}^j|\lesssim\mathfrak t.
 \end{equation*}
In view of \eqref{Ri} and Definition \ref{Definition error vectors and trace-free vectors}, we have
$R_kR_ix^j=R_k{R_i}^j=R_k({\Om_i}^j-v_i\Tr^j)$.
Then it follows from the estimates in Part 1 that
 \begin{equation*}
  |R_kR_ix^j|
  \lesssim \mathfrak t.
 \end{equation*}
Thus,
 \begin{equation*}
  |\Lies_{R_i}\ds x^j|=|\ds R_ix^j|\lesssim r^{-1}\sum_{k=1}^3|R_kR_ix^j|\lesssim 1
 \end{equation*}
and
 \begin{equation*}
  \begin{split}
   |\Lies_{R_i}R_j^k|
   &=|[R_i,R_j]^k|=|R_iR_jx^k-R_jR_ix^k|\lesssim\mathfrak t.
  \end{split}
 \end{equation*}
 Similarly, the estimate for $\Lies_{\rho L}\ds x^j$ and $\Lies_T\ds x^j$ could be obtained by the facts $\Lies_{\rho L}\ds x^j=\ds(\rho L^j)=\rho\ds\check L^j+\ds x^j$ and $\Lies_T\ds x^j=\ds (c^{-1}\mu(\check T^j-\f{x^j}{\rho}))$.

  \vskip 0.2 true cm

\textbf{Part 3. Bounds of ${}^{(R_i)}\pis,\ {}^{(R_i)}\pis_L,\ {}^{(R_i)}\pis_T,\ {}^{(T)}\pis,\ \Lies_{\rho L}R_j$ and $\Lies_T R_j$}

 \vskip 0.1 true cm

It follows from \eqref{Ri pi} and the estimates of $\check{L}^i,\ v_i,\ \chic, \mu$ and $\ds\mu$ that
 \begin{equation*}
  \begin{split}
   |{}^{(R_i)}\pis|+|{}^{(R_i)}\pis_{L}|+|{}^{(R_i)}\pis_{T}|
   &\lesssim M^p\de^{(1-\ve0)p}\mathfrak t^{-1}.
  \end{split}
 \end{equation*}
In addition, by \eqref{T pi}, we have
 \begin{equation*}
  \begin{split}
   |{}^{(T)}\pis|
   &=2|c^{-2}\mu\chi|\lesssim\mathfrak t^{-1}.
  \end{split}
 \end{equation*}
 Since $\Lies_{\rho L}R_j=\rho {}^{(R_j)}\pis_{L}$ and $\Lies_{T}R_j={}^{(R_j)}\pis_{T}$, the estimates of $\Lies_{\rho L}R_j$ and $\Lies_{T}R_j$ can be obtained directly.
\end{proof}

\section{Energy estimates for the linearized covariant wave equation}\label{Section 4}

In this section, we focus on the global energy estimates for the smooth function $\Psi$ to the following linearized covariant wave equation
\begin{equation}\label{linearized covariant wave equation}
 \mu\Box_g\Psi=\Phi,
\end{equation}
where $\Psi$ and its derivatives vanish on $C_0^\mathfrak t$. This procedure is divided into the following four steps.

\vskip 0.2 true cm

\textbf{Step 1.} New divergence form of \eqref{linearized covariant wave equation}

\vskip 0.2 true cm

By \eqref{Definition energy-momentum tensor}, one has
 \begin{equation*}
   \Box_g\Psi\cdot\p_{\be}\Psi=\D^{\al}Q_{\al\be}.
 \end{equation*}
Then for any vector field $V$, it follows from \eqref{Definition deformation tensor} that
 \begin{equation}\label{key formula}
   \Box_g\Psi\cdot V\Psi=\D_{\al}{}^{(V)}J^{\al}-\frac{1}{2}Q^{\al\be}[\Psi]{}^{(V)}\pi_{\al\be},
 \end{equation}
where ${}^{(V)}J^{\al}=Q_{\be}^{\al}V^{\be}$ with $Q_\beta^\al=g^{\al\gamma}Q_{\beta\gamma}$ and $Q^{\al\beta}=g^{\al\al'}g^{\beta\beta'}Q_{\al'\beta'}$.

\vskip 0.2 true cm

\textbf{Step 2.} Integration by parts on domain $D^{\mathfrak t,u}$

\vskip 0.2 true cm

Under the optical coordinate $\{\mathfrak t,u,\vt^1,\vt^2\}$, we have
 \begin{equation}\label{J under the optical coordinate}
  {}^{(V)}J={}^{(V)}J^\mathfrak t\frac{\p}{\p\mathfrak t}+{}^{(V)}J^u\frac{\p}{\p u}+{}^{(V)}\slashed{J}^A\frac{\p}{\p\vt^A}.
 \end{equation}
Denoting $N=\p_t=L+c^2\mu^{-1}T$ by the normal vector. Then by taking the inner product with $N$ on both sides of \eqref{J under the optical coordinate}, one has
 \begin{equation*}
  \begin{split}
   {}^{(V)}J_N
   &={}^{(V)}J^\mathfrak tg(L,N)+{}^{(V)}J^ug(T,N)+{}^{(V)}\slashed{J}^Ag(X_A,N)\\
   &=-c^2{}^{(V)}J^\mathfrak t.
  \end{split}
 \end{equation*}
Hence, ${}^{(V)}J^\mathfrak t=-c^{-2}{}^{(V)}J_N$. Similarly, ${}^{(V)}J^u=-\mu^{-1}{}^{(V)}J_L$. Therefore,
it follows from $\sqrt{|\det g|}=\mu\sqrt{\det\gs}$ that
 \begin{equation*}
  \begin{split}
   \D_{\al}{}^{(V)}J^{\al}
   &=\frac{1}{\sqrt{|\det g|}}\big[\frac{\p}{\p\mathfrak t}(\sqrt{|\det g|}{}^{(V)}J^\mathfrak t)+\frac{\p}{\p u}(\sqrt{|\det g|}{}^{(V)}J^u)+\frac{\p}{\p\vt^A}(\sqrt{|\det g|}{}^{(V)}\slashed{J}^A)\big]\\
   &=\frac{1}{\sqrt{|\det g|}}\big[\frac{\p}{\p\mathfrak t}(-c^{-2}\mu{}^{(V)}J_N\sqrt{\det\gs})+\frac{\p}{\p u}(-{}^{(V)}J_L\sqrt{\det\gs})+\frac{\p}{\p\vt^A}(\sqrt{|\det g|}{}^{(V)}\slashed{J}^A)\big].
  \end{split}
 \end{equation*}
Integrating over $D^{\mathfrak t,u}$ to obtain
 \begin{equation}\label{divergence theorem}
  -\int_{D^{\mathfrak t,u}}\mu\D_{\al}{}^{(V)}J^{\al}=\int_{\Si_\mathfrak t^u}c^{-2}\mu{}^{(V)}J_N-\int_{\Si_{t_0}^u}c^{-2}\mu{}^{(V)}J_N+\int_{C_u^\mathfrak t}{}^{(V)}J_L,
 \end{equation}
where
 \begin{equation*}
  {}^{(V)}J_N={}^{(V)}J^{\al}N_{\al}=Q_{\be}^{\al}V^{\be}N_{\al}=Q_{VN},\ {}^{(V)}J_L=Q_{VL}.
 \end{equation*}

 \vskip 0.2 true cm

\textbf{Step 3.} Energy identity

\vskip 0.2 true cm

Choosing two vector fields $V_1=\rho^{2s}L,V_2=\Lb$ as multipliers with $0<s<\frac{1}{2}$ being
any fixed constant. By $T=\frac{1}{2}(\Lb-c^{-2}\mu L)$ and \eqref{components of energy-momentum tensor}, then
 \begin{equation*}
  \begin{split}
   Q_{V_1N}&=\frac{1}{2}\rho^{2s}\big[(L\Psi)^2+c^2|\ds\Psi|^2\big],\\
      Q_{V_1L}&=\rho^{2s}(L\Psi)^2,\\
         Q_{V_2N}&=\frac{1}{2}\big[c^2\mu^{-1}(\Lb\Psi)^2+\mu|\ds\Psi|^2\big],\\
      Q_{V_2L}&=\mu|\ds\Psi|^2.
  \end{split}
 \end{equation*}
By \eqref{divergence theorem} and \eqref{key formula}, we have the following energy identity
 \begin{equation}\label{energy identity}
  E_i[\Psi](\mathfrak t,u)-E_i[\Psi](t_0,u)+F_i[\Psi](\mathfrak t,u)=-\int_{D^{\mathfrak t,u}}\Phi\cdot V_i\Psi-\int_{D^{\mathfrak t,u}}\frac{1}{2}\mu Q^{\al\be}[\Psi]^{(V_i)}\pi_{\al\be},\ i=1,2,
 \end{equation}
where the \textit{energies} $E_i[\Psi](t,u)$ and \textit{fluxes} $F_i[\Psi](t,u)(i=1,2)$ are defined as follows
 \begin{equation}\label{Definition of energy and flux}
  \begin{split}
   E_1[\Psi](\mathfrak t,u)               &=\int_{\Si_\mathfrak t^u}\frac{1}{2}\rho^{2s}\big[c^{-2}\mu(L\Psi)^2+\mu|\ds\Psi|^2\big],\\
   E_2[\Psi](\mathfrak t,u)
                 &=\int_{\Si_\mathfrak t^u}\frac{1}{2}\big[(\Lb\Psi)^2+c^{-2}\mu^2|\ds\Psi|^2\big],\\
   F_1[\Psi](\mathfrak t,u)&=\int_{C_u^\mathfrak t}\rho^{2s}(L\Psi)^2,\\
   F_2[\Psi](\mathfrak t,u)&=\int_{C_u^\mathfrak t}\mu|\ds\Psi|^2.
  \end{split}
 \end{equation}

 \vskip 0.2 true cm

\textbf{Step 4.} Error estimates and the energy inequality

\vskip 0.2 true cm

Next, we deal with the second integral in the right hand side of $\eqref{energy identity}$. By Remark \ref{Remark componets of gs},
one has
 \begin{equation*}
  \begin{split}
   &\quad-\frac{1}{2}\mu Q^{\al\be}[\Psi]{}^{(V)}\pi_{\al\be}=-\frac{1}{2}\mu g^{\al\ka}g^{\be\la}Q_{\ka\la}[\Psi]{}^{(V)}\pi_{\al\be}\\
   &=-\frac{1}{2}\mu\big[-\frac{1}{2}\mu^{-1}(L^{\al}\Lb^{\ka}+\Lb^{\al}L^{\ka})+\gs^{AB}X_A^{\al}X_B^{\ka}\big]\big[-\frac{1}{2}\mu^{-1}(L^{\be}\Lb^{\la}+\Lb^{\be}L^{\la})+\gs^{CD}X_C^{\be}X_D^{\la}\big]Q_{\ka\la}{}^{(V)}\pi_{\al\be}\\
   &=-\frac{1}{8}\mu^{-1}(Q_{LL}{}^{(V)}\pi_{\Lb\Lb}+Q_{\Lb\Lb}{}^{(V)}\pi_{LL})-\frac{1}{4}\mu^{-1}Q_{L\Lb}{}^{(V)}\pi_{L\Lb}+\frac{1}{2}(Q_L^A{}^{(V)}\pis_{\Lb A}+Q_{\Lb}^A{}^{(V)}\pis_{LA})-\frac{1}{2}\mu Q^{AB}{}^{(V)}\pis_{AB}.
  \end{split}
 \end{equation*}
Then by \eqref{components of energy-momentum tensor}, we obtain
 \begin{equation}\label{Q*pi 1}
  \begin{split}
   &-\frac{1}{2}\mu Q^{\al\be}[\Psi]{}^{(V_1)}\pi_{\al\be}\\
   =&\big[-\frac{1}{2}\rho^{2s}L(c^{-2}\mu)+s\rho^{2s-1}c^{-2}\mu-2s\rho^{2s-1}\big](L\Psi)^2+(\frac{1}{2}\rho^{2s}L\mu+s\rho^{2s-1}\mu)|\ds\Psi|^2\\
   &+\rho^{2s}c^2\ds_A(c^{-2}\mu)L\Psi\ds^A\Psi-\rho^{2s}\mu\check\chi_{AB}\ds^A\Psi\ds^B\Psi+\f12\rho^{2s}\mu\trgs\check\chi|\ds\Psi|^2-\frac{1}{2}\rho^{2s}\trgs\chi L\Psi\Lb\Psi\\
  \end{split}
 \end{equation}
and
 \begin{equation}\label{Q*pi 2}
  \begin{split}
   &-\frac{1}{2}\mu Q^{\al\be}[\Psi]{}^{(V_2)}\pi_{\al\be}\\
   =&\frac{1}{2}\big[\Lb\mu+\mu L(c^{-2}\mu)\big]|\ds\Psi|^2-\mu\ds_A(c^{-2}\mu)L\Psi\ds^A\Psi-c^2\ds_A(c^{-2}\mu)\Lb\Psi\ds^A\Psi\\
   &+c^{-2}\mu^2\check\chi_{AB}\ds^A\Psi\ds^B\Psi-\f12c^{-2}\mu^2\trgs\check\chi|\ds\Psi|^2+\frac{1}{2}c^{-2}\mu\trgs\chi L\Psi\Lb\Psi.
  \end{split}
 \end{equation}
Applying the results in Section \ref{Section 3} to estimate all the coefficients in \eqref{Q*pi 1} and \eqref{Q*pi 2} yields
 \begin{equation}\label{estimate of Q*pi 1}
  \begin{split}
   &\int_{D^{\mathfrak t,u}}-\frac{1}{2}\mu Q^{\al\be}[\Psi]{}^{(V_1)}\pi_{\al\be}\\
   \lesssim&\int_{D^{\mathfrak t,u}}\Big\{\delta^{-1}\rho^{2s}(L\Psi)^2+|\ds\Psi|^2+\tau^{-2+2s}\cdot\de(\Lb\Psi)^2\Big\}\\
   \lesssim&\de^{-1}\int_0^uF_1[\Psi](\mathfrak t, u')\d u'+\int_{t_0}^\mathfrak t\tau^{-2+2s}\cdot\de E_2[\Psi](\tau,u)\d\tau+\de^{-1}\int_0^u\cdot\de F_2[\Psi](\mathfrak t, u')\d u'
  \end{split}
 \end{equation}
and
 \begin{equation}\label{estimate of Q*pi 2}
  \begin{split}
   &\de\int_{D^{\mathfrak t,u}}-\frac{1}{2}\mu Q^{\al\be}[\Psi]{}^{(V_2)}\pi_{\al\be}\\
   \lesssim&\int_{D^{\mathfrak t,u}}\Big\{M^p\de^{(1-\ve0)p-2}\cdot\de|\ds\Psi|^2+\big[\de(L\Psi)^2+\tau^{-2}\cdot\de(\Lb\Psi)^2\big]\Big\}\\
   \lesssim&\int_0^uM^p\de^{(1-\ve0)p-2}\cdot\de F_2[\Psi](\mathfrak t, u')\d u'+\int_0^u\de F_1[\Psi](\mathfrak t, u')\d u'+\int_{t_0}^\mathfrak t\tau^{-2}\cdot\de E_2[\Psi](\tau, u)\d\tau.
  \end{split}
 \end{equation}
Substituting \eqref{estimate of Q*pi 1} and \eqref{estimate of Q*pi 2} into \eqref{energy identity} and using the Gronwall's inequality, we obtain
 \begin{equation}\label{energy inequality}
  \begin{split}
   &\quad E_1[\Psi](\mathfrak t,u)+F_1[\Psi](\mathfrak t,u)+\de E_2[\Psi](\mathfrak t,u)+\de F_2[\Psi](\mathfrak t,u)\\
   &\lesssim E_1[\Psi](t_0,u)+\de E_2[\Psi](t_0,u)+\int_{D^{\mathfrak t,u}}\rho^{2s}|\Phi\cdot L\Psi|+\de\int_{D^{\mathfrak t,u}}|\Phi\cdot\Lb\Psi|.
  \end{split}
 \end{equation}

At the last of this subsection, we define the higher order energy and flux as follows
 \begin{equation}\label{definition of high order energy and flux}
  \begin{split}
   E_{i,m+1}(\mathfrak t,u)=\sum_{\ga=0}^3\sum_{|\al|=m}&\de^{2l}E_i[Z^{\al}\vp_{\ga}](\mathfrak t,u),\ i=1,2,\\
   F_{i,m+1}(\mathfrak t,u)=\sum_{\ga=0}^3\sum_{|\al|=m}&\de^{2l}F_i[Z^{\al}\vp_{\ga}](\mathfrak t,u),\ i=1,2,\\
   E_{i,\leq m+1}(\mathfrak t,u)=\sum_{0\leq n\leq m}E_{i,n+1}(\mathfrak t,u),\quad&\Et_{i,\leq m+1}(\mathfrak t,u)=\sup_{t_0\leq\tau\leq \mathfrak t}E_{i,\leq m+1}(\tau,u),\ i=1,2,\\
   F_{i,\leq m+1}(\mathfrak t,u)=\sum_{0\leq n\leq m}F_{i,n+1}(\mathfrak t,u),\quad&\tilde F_{i,\leq m+1}(\mathfrak t,u)=\sup_{t_0\leq\tau\leq \mathfrak t}F_{i,m+1}(\tau,u),\ i=1,2,
  \end{split}
 \end{equation}
where $l$ is the number of $T$ appearing in $Z^{\al}$.

\section{Non-top order derivative estimates}\label{Section 5}

\subsection{$L^{\infty}$ estimates}\label{Section 5.1}

In Section \ref{Section 3}, we have obtained the lower order $L^{\infty}$ estimates for some quantities. Since our aim is to close the bootstrap assumptions \eqref{BA}, the results obtained in Section \ref{Section 3} are far from enough. For this purpose, we give the higher order $L^{\infty}$ estimates.

\begin{proposition}\label{Proposition higher order L^infty estimates}
Under the assumptions \eqref{BA}, for any vector field $Z\in\{\rho L,T,R_1,R_2,R_3\}$, when $\de>0$ is small,
it holds that for $|\al|\leq N-1$,
 \begin{equation}\label{P6.1}
  \begin{split}
   &\de^l|\Lies_{Z}^{\al}\chic|\lesssim M^p\de^{(1-\ve0)p}\mathfrak t^{-2},\qquad \de^l|Z^{\al+1}\mu|\lesssim M^p\de^{(1-\ve0)p-1},\\
   &\de^l|Z^{\al+1}\check{L}^j|\lesssim M^p\de^{(1-\ve0)p}\mathfrak t^{-1},\qquad\delta^l|Z^{\al+1}v_j|\lesssim M^p\de^{(1-\ve0)p},\\
   &\de^l|\Lies_{Z}^{\al+1}\ds x^j|\lesssim 1,\qquad \de^l|\Lies_{Z}^{\al+1}R_j|\lesssim\mathfrak t,\qquad\delta^l|{\Lies_{Z}^{\al}}{}^{(T)}\pis|\lesssim\mathfrak t^{-1},\\
   &\de^l\big(|{\Lies_{Z}^{\al}}{}^{(R_j)}\pis_L|+|{\Lies_{Z}^{\al}}{}^{(R_j)}\pis_T|+|{\Lies_{Z}^{\al}}{}^{(R_j)}\pis|\big)\lesssim M^p\de^{(1-\ve0)p}\mathfrak t^{-1},
  \end{split}
 \end{equation}
where $l$ is the number of $T$ appearing in the string of $Z$.
\end{proposition}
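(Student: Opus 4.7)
The plan is to prove Proposition~5.1 by a simultaneous induction on $|\alpha|$, closing all estimates in \eqref{P6.1} at order $|\alpha|$ together before moving to order $|\alpha|+1$. This is necessary because the quantities $\chi$, $\mu$, $\check L^j$, $v_j$, and the various ${}^{(Z)}\slashed\pi$ components are coupled through transport equations, structure equations, and the explicit formulas (2.31)--(2.33). The base case $|\alpha|=0$ is exactly Lemma~\ref{Lem4.1}, \eqref{estimate of chi small}, \eqref{estimate of ds mu}, and \eqref{estimate of T mu}.

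The heart of the argument is a transport estimate along integral curves of $L$. For $\Lies_Z^{\alpha}\chic$, I would commute $\Lies_Z^{\alpha}$ into \eqref{transport equation of chi small}, write $[L,\Lies_Z^{\alpha}]$ as a sum of $\Lies_Z^{\beta_1}\!{}^{(Z)}\slashed\pi$-contracted lower-order factors of $\Lies_Z^{\beta_2}\chic$ (with $|\beta_1|+|\beta_2|<|\alpha|$), and use the inductive bounds on deformation tensors and lower-order $\chic$. After multiplying by the weight $\rho^2$ to absorb the $\rho^{-1}c^{-1}(Lc)\slashed g_{AB}$ source term, Gronwall in $\mathfrak t$ gives $\delta^l|\Lies_Z^{\alpha}\chic|\lesssim M^p\delta^{(1-\ve0)p}\mathfrak t^{-2}$; the curvature contribution $\Lies_Z^{\alpha}\check{\mathscr R}_{LALB}$ is controlled by \eqref{RLALB small}--\eqref{RLALB} together with \eqref{BA}. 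For $Z^{\alpha+1}\mu$, I commute $Z^{\alpha}$ into \eqref{transport equation of mu}, so that $L(Z^{\alpha+1}\mu)=c^{-1}Lc\cdot Z^{\alpha+1}\mu+(\text{inductive error})$; the $T$-derivative case is the worst and yields the $\delta^{(1-\ve0)p-1}$ loss, consistent with the claimed bound.

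With $\chic$ and $Z^{\alpha+1}\mu$ in hand, I would next close the bounds on $Z^{\alpha+1}\check L^j$ by commuting \eqref{transport equation of L^i} and using \eqref{structure equation of L^i small} for the angular components; then $Z^{\alpha+1}v_j$ follows from the algebraic definition \eqref{vi} and the bounds on $Z^{\alpha+1}\check T^j$ obtained via Lemma~\ref{Lemma relation of error vectors and trace-free vectors}. The bounds on $\Lies_Z^{\alpha+1}\slashed dx^j$ and $\Lies_Z^{\alpha+1}R_j$ are reduced to the above by the identity $R_j=\Omega_j-v_j\Tr$ and $[Z,R_j]=\Lies_Z R_j$, just as in Part~2 of the proof of Lemma~\ref{Lem4.1} but at higher order. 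Finally, plugging these estimates into \eqref{L pi}--\eqref{Ri pi} and applying $\Lies_Z^{\alpha}$ (using Lemma~\ref{Lemma commutator [Des,Z]} to commute with $\nas$ where needed) yields the deformation-tensor bounds in the last two lines of \eqref{P6.1}.

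The main obstacle will be the bookkeeping of $\delta$-powers in the commutator terms. Each $T\in Z$ costs a factor $\delta^{-1}$ in pointwise norms but is compensated by the weight $\delta^l$; when expanding $[L,Z^{\alpha}]$ or $[\Lies_Z^{\alpha},\rho^{-1}\slashed g]$, the $T$'s get redistributed among several lower-order factors, and one has to verify that in every monomial the total number of $T$'s on all factors agrees with $l$ so that the weights combine cleanly. A secondary subtlety is the circularity between the bounds on $\chic$ and on $\mu$ (through $\check{\mathscr R}_{LALB}$ and through $c$), which forces the simultaneous induction ordering above: at each order $|\alpha|$ one must prove $\chic$ before $\mu$, both before $\check L^j$ and $v_j$, and all of these before the deformation tensors, exactly mirroring the hierarchy used in Section~\ref{Section 3} at the base case.
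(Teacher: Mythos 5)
Your proposal is correct and follows essentially the same route as the paper: induction on $|\al|$, commuting $\Lies_Z^{\al}$ through the $L$-transport equations \eqref{transport equation of chi small} and \eqref{transport equation of mu} with $\rho$-weights and Gronwall, then closing $\check L^j$, $v_j$, $\ds x^j$, $R_j$ and the deformation tensors algebraically, in exactly the dependency order the paper uses. The only (inessential) organizational difference is that the paper first runs the induction for purely rotational $Z$, then inducts on the number of $T$'s using the structure equation \eqref{transport equation of chi along T} for $\Lies_T\chic$ (rather than commuting $T$ through the $L$-equation), and handles $\rho L$ last via the transport equations.
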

\begin{proof}
We will prove this proposition by the induction method with respect to the index $\al$. Note that we have proved \eqref{P6.1} for the case $\al=0$ in Section \ref{Section 3}.

We first prove \eqref{P6.1} which only involves the rotational vector fields, that is, $Z\in\{R_1, R_2, R_3\}$.
Assume that \eqref{P6.1} holds up to the order $k-1$ $(1\leq k\leq N-1)$,
one needs to show that \eqref{P6.1} is also true for $\al$ with $|\al|=k$.
\begin{enumerate}
	\item
\textbf{Bounds of $\Lies_{R_i}^{\al}\chic$ and ${\Lies_{R_i}^{\al}}{}^{(R_j)}\pis_L$}

Using the expression of ${\Lies_{R_i}^{\al}}{}^{(R_j)}\pis_{LA}$ in \eqref{Ri pi} and the induction assumption,
we can check that
\begin{equation}\label{Y-35}
|\slashed{\mathcal{L}}_{R_i}^\al\leftidx{^{(R_j)}}{\slashed\pi}_{L}|\lesssim \mathfrak t|\slashed{\mathcal{L}}_{R_i}^\al\check{\chi}|+M^p\delta^{(1-\varepsilon_0)p}\mathfrak t^{-1}.
\end{equation}
This means that the estimate of ${\Lies_{R_i}^{\al}}{}^{(R_j)}\pis_L$ can be obtained once $\Lies_{R_i}^{\al}\chic$ is bounded.
Commuting $\Lies_{R_i}^{\al}$ with $L\chic_{AB}$ to derive
 \begin{equation}\label{commute Lies_Ri^al with L chic}
  \begin{split}
   \Lies_L\Lies_{R_i}^{\al}\chic_{AB}
   &=\Lies_{R_i}^{\al}(L\chic_{AB})+\sum_{\be_1+\be_2=\al-1}\Lies_{R_i}^{\be_1}\Lies_{[L,R_i]}\Lies_{R_i}^{\be_2}\chic_{AB}.
  \end{split}
 \end{equation}

Substituting \eqref{transport equation of chi small} and the identity $[L,R_i]={\leftidx{^{(R)}}{\slashed\pi}_{L}}^CX_C$
into \eqref{commute Lies_Ri^al with L chic}, and applying Lemma \ref{Lemma commutator [Des,Z]}, \eqref{Y-35} and induction argument,
we arrive at
 \begin{equation}\label{L^infty estimate of I1+I2}
  \begin{split}
   |\Lies_L\Lies_{R_i}^{\al}\chic|
   &\lesssim M^p\de^{(1-\ve0)p}\mathfrak t^{-2}|\Lies_{R_i}^{\al}\chic|+M^p\de^{(1-\ve0)p}\mathfrak t^{-(p+2)}.
  \end{split}
 \end{equation}
Similarly to \eqref{weighted transport equation of |chi small|^2}, one has
 \begin{equation}\label{weighted transport equation of Lies_{R_i}^{al} chic}
  L(\rho^4|\Lies_{R_i}^{\al}\chic|^2)=-4\rho^4\chic_A^B\cdot\Lies_{R_i}^{\al}\chic_B^C\cdot\Lies_{R_i}^{\al}\chic_C^A
  +2\rho^4\Lies_{R_i}^{\al}\chic^{AB}\cdot\Lies_L\Lies_{R_i}^{\al}\chic_{AB}.
 \end{equation}
Then combining \eqref{L^infty estimate of I1+I2} and \eqref{weighted transport equation of Lies_{R_i}^{al} chic} yields
 \begin{equation*}
  L(\rho^2|\Lies_{R_i}^{\al}\chic|)\lesssim M^p\de^{(1-\ve0)p}\mathfrak t^{-2}\cdot\rho^2|\Lies_{R_i}^{\al}\chic|+M^p\de^{(1-\ve0)p}\mathfrak t^{-p}.
 \end{equation*}
By Gronwall's inequality, then
 \begin{equation*}
 |\Lies_{R_i}^{\al}\chic|\lesssim M^p\de^{(1-\ve0)p}\mathfrak t^{-2}.
 \end{equation*}
This, together with \eqref{Y-35}, derives
  \begin{equation*}
 |\slashed{\mathcal{L}}_{R_i}^\al\leftidx{^{(R_j)}}{\slashed\pi}_{L}|\lesssim M^p\de^{(1-\ve0)p}\mathfrak t^{-1}.
 \end{equation*}

\item \textbf{Bounds of $R_i^{\al+1}\mu$}

Similarly to the estimate $\Lies_{R_i}^\al\chic$, we commute $R_i^{\al+1}$ with $L$ to get
 \begin{equation}\label{commute Ri^al with L mu}
  \begin{split}
   LR_i^{\al+1}\mu=R_i^{\al+1}L\mu+\sum_{\be_1+\be_2=\al}R_i^{\be_1}[L,R_i]R_i^{\be_2}\mu.
  \end{split}
 \end{equation}
By \eqref{transport equation of mu}, the induction assumptions and \eqref{BA}, one has
 \begin{equation*}
  \begin{split}
   |LR_i^{\al+1}\mu|\lesssim M^p\de^{(1-\ve0)p}\mathfrak t^{-2}|R_i^{\al+1}\mu|+M^p\de^{(1-\ve0)p-1}\mathfrak t^{-2}.
  \end{split}
 \end{equation*}
By Gronwall's inequality, then
 \begin{equation}
  |R_i^{\al+1}\mu|\lesssim M^p\de^{(1-\ve0)p-1}.\label{Rmu}
 \end{equation}

\item \textbf{Bounds of $R_i^{\al+1}\check{L}^j,\ R_i^{\al+1}v_j$, $\Lies_{R_i}^{\al+1}\ds x^j,\ \text{and}\ \Lies_{R_i}^{\al+1}R_j$}

It follows \eqref{structure equation of L^i small} that $R_i^{\al+1}\check{L}^j=R_i^{\al}(R_i^A\chic_{AB}\ds^Bx^j)$.
Then by the induction assumptions, the estimate of $\Lies_{R_i}^{\al}\chic$ and
the identity $\check{L}^i=-c\check{T}^i+(c-1)\rho^{-1}x^i$, one has
 \begin{equation*}
  |R_i^{\al+1}\check{L}^j|+|R_i^{\al+1}\check{T}^j|\lesssim M^p\de^{(1-\ve0)p}\mathfrak t^{-1},
 \end{equation*}
 which gives the estimate of $R_i^{\al+1}v_j$ directly by \eqref{vi}. And hence, $|R_i^{\al+2}x^j|\lesssim\mathfrak t$
 holds because of $R_i^{\al+2}x^j=R_i^{\al+1}(\Om_i^j-v_i\Tr^j)$. Therefore,
 \begin{equation*}
 |\Lies_{R_i}^{\al+1}\ds x^j|\lesssim 1,\ \text{and}\ |\Lies_{R_i}^{\al+1}R_j^k|\lesssim\mathfrak t.
 \end{equation*}

\item \textbf{Bounds of $\Lies_{R_i}^{\al}{}^{(R_j)}\pis,\ \Lies_{R_i}^{\al}{}^{(R_j)}\pis_T,\ \text{and}\ \Lies_{R_i}^{\al}{}^{(T)}\pis$}

According the expressions in \eqref{Ri pi} and \eqref{T pi}, the estimates of $\Lies_{R_i}^{\al}{}^{(R_j)}\pis$, $\Lies_{R_i}^{\al}{}^{(R_j)}\pis_T$ and $\Lies_{R_i}^{\al}{}^{(T)}\pis$ can be obtained immediately with the help of all the results
in Parts 1-3 and the induction argument.
\end{enumerate}

It follows from \eqref{transport equation of chi along T} and Definition \ref{Definition error vectors and trace-free vectors} that the formula of $\Lies_T\check\chi$ is obtained. After taking the Lie derivatives of this formula with respect to the rotational vector fields, one can estimate  $\Lies_{R_i}^{\al-1}\Lies_T\check\chi$ directly by using the estimates above and \eqref{BA}. Therefore, the bound
of $\Lies_Z^\al\check\chi$ can be obtained when $Z\in\{T, R_1, R_2, R_3\}$
and there is only one $T$ in $Z^\al$.

Similarly, if $Z\in\{T, R_1, R_2, R_3\}$ and $Z^{\al+1}$ only contains one $T$, then
$Z^{\al+1}\check L^j$, $\Lies_Z^{\al+1}\ds x^j$ and $\Lies_Z^{\al+1}R_j$ can be estimated
since the expressions of $T\check L^i$, $\Lies_T\ds x^j$ and $\Lies_T R_j$ have
appeared in the proof of Lemma \ref{Lem4.1}. With the same procedure for deriving \eqref{estimate of T mu}
or \eqref{Rmu}, $Z^{\al+1}\mu$ is also estimated. The reminder quantities such
as $Z^{\al+1}v_j$, ${\Lies_{Z}^{\al}}{}^{(R_j)}\pis_L$ e.t.c. that contain one derivative of $T$
and other derivatives of the rotational vector fields can be estimated by \eqref{vi}, \eqref{T pi} and \eqref{Ri pi}.

By induction argument with respect to the number of $T$, \eqref{P6.1} can be proved for $Z\in\{T, R_1, R_2, R_3\}$.

Finally, when the derivatives in $Z^\al$ involve the scaling vectorfield $\rho L$, the transport
equations (e.g. \eqref{transport equation of mu}, \eqref{transport equation of chi}, \eqref{transport equation of L^i})
can be utilized to derive \eqref{P6.1}.
\end{proof}

\subsection{$L^2$ estimates}\label{Section 5.2}
In this subsection, we shall establish the higher order $L^2$ estimates for some related quantities so that the last two terms of \eqref{energy inequality} can be absorbed by the left hand side, and hence the higher order energy estimates on \eqref{covariant wave equation} can be completed.

At first, we list two lemmas which are similar to Lemma 7.3 in \cite{MY17} and Lemma 12.57 in \cite{Sp16}.
\begin{lemma}\label{Lemma 1 in higher order L^2 estimates}
For any $\Psi\in C^1(D^{\mathfrak t,u})$ which vanishes on $C_0$, one has that for small $\de>0$,
 \begin{equation*}
  \begin{split}
   &\int_{S_{\mathfrak t,u}}\Psi^2\lesssim\de\int_{\Si_\mathfrak t^u}\big[(\Lb\Psi)^2+\mu^2(L\Psi)^2\big]\lesssim\de\big\{\rho^{-2s}E_1[\Psi](\mathfrak t,u)+E_2[\Psi](\mathfrak t,u)\big\},\\
   &\int_{\Si_\mathfrak t^u}\Psi^2\lesssim\de^2\int_{\Si_\mathfrak t^u}\big[(\Lb\Psi)^2+\mu^2(L\Psi)^2\big]\lesssim\de^2\big\{\rho^{-2s}E_1[\Psi](\mathfrak t,u)+E_2[\Psi](\mathfrak t,u)\big\}.
  \end{split}
 \end{equation*}
\end{lemma}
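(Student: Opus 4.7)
The plan is to treat this as a Poincaré-type inequality grounded on the vanishing of $\Psi$ on $C_0=\{u=0\}$ and on the fact that, in the optical coordinate $u$, the slab $D^{\mathfrak t,u}$ has width at most $4\delta$. The recovery of $\Psi$ from its derivatives will be obtained by integrating $T\Psi$ along integral curves of $T$ starting from $C_0$, after which the decomposition $T=\f{1}{2}(\Lb-c^{-2}\mu L)$ converts the result into the quantities $(\Lb\Psi)^2$ and $\mu^2(L\Psi)^2$ appearing on the middle right-hand side.

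Concretely, I would work in the chart $(\mathfrak t,u,\vartheta)$ in which $T=\p/\p u-\Xi$ with $\Xi$ tangent to the spheres. Integral curves of $T$ through a point $\vartheta_0\in S_{\mathfrak t,0}\subset C_0$ are parametrized by $u'\in[0,u]$; writing $\vartheta(u')$ for the angular component, the hypothesis $\Psi|_{C_0}=0$ and the fundamental theorem of calculus give
$$\Psi\bigl(\mathfrak t,u,\vartheta(u)\bigr)=\int_0^u(T\Psi)\bigl(\mathfrak t,u',\vartheta(u')\bigr)\,du'.$$
Cauchy--Schwarz in $u'$, together with $u\leq 4\delta$, produces the pointwise bound $\Psi^2\lesssim\delta\int_0^{4\delta}(T\Psi)^2\,du'$, and the identity $T=\f{1}{2}(\Lb-c^{-2}\mu L)$ combined with $c\sim\mu\sim 1$ from \eqref{estimate of mu} yields $(T\Psi)^2\lesssim(\Lb\Psi)^2+\mu^2(L\Psi)^2$ pointwise.

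The step requiring some care is the integration over $S_{\mathfrak t,u}$: one must change variables from the endpoint $\vartheta(u)$ back to the initial point $\vartheta_0$ on $S_{\mathfrak t,0}$, and compare the area form $\sqrt{\det\gs}$ along the flow of $T$. Since $\Lies_T\gs={}^{(T)}\pis$ (cf. Lemma \ref{Lemma commutator [Des,Z]}) and Lemma \ref{Lem4.1} together with \eqref{T pi} give $|{}^{(T)}\pis|\lesssim\mathfrak t^{-1}$, a Gronwall argument on the evolution of the pulled-back area element over a $u$-interval of length at most $4\delta$ keeps the Jacobian of this flow uniformly bounded above and below. This delivers the first inequality $\int_{S_{\mathfrak t,u}}\Psi^2\lesssim\delta\int_{\Sigma_\mathfrak t^u}[(\Lb\Psi)^2+\mu^2(L\Psi)^2]$, and integrating once more in $u$ over $[0,4\delta]$ yields the $\delta^2$ version.

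Finally, passing from the middle expression to the energies \eqref{Definition of energy and flux} is a bookkeeping step: $(\Lb\Psi)^2$ is controlled directly by $2E_2[\Psi]$, and for $\mu^2(L\Psi)^2$ the bounds $\mu\sim c\sim 1$ allow replacement by $c^{-2}\mu(L\Psi)^2$, after which the weight $\rho^{2s}$ inside $E_1$ can be factored out as $\rho^{-2s}$ evaluated at the infimum of $\rho$ on $\Sigma_\mathfrak t^u$ (namely $\rho=\mathfrak t-u\sim\mathfrak t$, since $u\leq 4\delta\ll\mathfrak t$). The main technical point in the whole argument is the Jacobian comparison for the flow of $T$ between nearby spheres; everything else reduces to one-dimensional Cauchy--Schwarz and the pointwise bounds already established in Section \ref{Section 3}.
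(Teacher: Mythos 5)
Your argument is correct and is essentially the standard proof that the paper invokes by citing Lemma 7.3 of \cite{MY17} and Lemma 12.57 of \cite{Sp16}: integrate $T\Psi$ from $C_0$ along the flow of $T$ within $\Sigma_{\mathfrak t}$, apply Cauchy--Schwarz over the $u$-interval of length at most $4\delta$, control the area-form Jacobian via $\Lies_T\gs={}^{(T)}\pis$ with $|{}^{(T)}\pis|\lesssim\mathfrak t^{-1}$, and convert $T=\f12(\Lb-c^{-2}\mu L)$ together with $c\sim\mu\sim1$ and $\rho\sim\mathfrak t$ into the stated energy bounds. No gaps; the second inequality indeed follows by one further integration in $u$.
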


\begin{lemma}\label{Lemma 2 in higher order L^2 estimates}
For any $f\in C(D^{\mathfrak t,u})$, set
 \begin{equation*}
  F(\mathfrak t,u,\vt)=\int_{t_0}^\mathfrak tf(\tau,u,\vt)\d\tau.
 \end{equation*}
Under the assumptions \eqref{BA}, it holds that for small $\de>0$,
 \begin{equation*}
  \|F\|_{L^2(\Si_\mathfrak t^u)}\lesssim\rho(\mathfrak t,u)\int_{t_0}^\mathfrak t\tau^{-1}\|f\|_{L^2(\Si_{\tau}^u)}\d\tau.
 \end{equation*}
\end{lemma}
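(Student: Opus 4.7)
The plan is to apply Minkowski's integral inequality together with a comparison of the induced area forms $\sqrt{\det\slashed g}$ at different values of $\mathfrak t$, controlled via the transport equation $L\slashed g_{AB}=2\chi_{AB}$ and the smallness of $\check\chi$ already established under the bootstrap assumptions.

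First, since on $\Sigma_{\mathfrak t}^u$ the integration in the optical coordinates is $\int_0^u\int_{\mathbb S^2}(\cdot)\sqrt{\det\slashed g(\mathfrak t,u',\vartheta)}\,d\vartheta\,du'$, Minkowski's inequality in the $L^2$-norm with respect to $(u',\vartheta)$ gives
\begin{equation*}
\|F\|_{L^2(\Sigma_{\mathfrak t}^u)}\leq\int_{t_0}^{\mathfrak t}\bigl\|f(\tau,\cdot,\cdot)\bigr\|_{L^2(\Sigma_{\mathfrak t}^u)}\,d\tau,
\end{equation*}
where on the right $f$ is evaluated at parameter $\tau$ but the measure is taken at $\mathfrak t$. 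The point is therefore to convert the measure from $\sqrt{\det\slashed g(\mathfrak t,u',\vartheta)}$ to $\sqrt{\det\slashed g(\tau,u',\vartheta)}$.

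Next, from $L\slashed g_{AB}=2\chi_{AB}$ and $L=\partial/\partial\mathfrak t$ in the optical coordinates, one obtains $L\sqrt{\det\slashed g}=(\trgs\chi)\sqrt{\det\slashed g}$. Writing $\trgs\chi=2\rho^{-1}+\trgs\chic$ and using \eqref{estimate of chi small}, namely $|\chic|\lesssim M^p\de^{(1-\ve0)p}\mathfrak t^{-2}$, one integrates along the integral curves of $L$ from $\tau$ to $\mathfrak t$ to get
\begin{equation*}
\sqrt{\det\slashed g(\mathfrak t,u',\vartheta)}=\sqrt{\det\slashed g(\tau,u',\vartheta)}\exp\!\Bigl(\int_\tau^{\mathfrak t}\!\bigl(2(s-u')^{-1}+O(M^p\de^{(1-\ve0)p}s^{-2})\bigr)ds\Bigr),
\end{equation*}
hence, for small $\de$,
\begin{equation*}
\frac{\sqrt{\det\slashed g(\mathfrak t,u',\vartheta)}}{\sqrt{\det\slashed g(\tau,u',\vartheta)}}\lesssim\Bigl(\frac{\rho(\mathfrak t,u')}{\rho(\tau,u')}\Bigr)^{2}.
\end{equation*}

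Now, since $0\leq u'\leq u\leq 4\de$ and $\tau,\mathfrak t\geq t_0=1+2\de$, one has $\rho(\mathfrak t,u')\simeq \rho(\mathfrak t,u)\simeq\mathfrak t$ and $\rho(\tau,u')\simeq \tau$ uniformly in $u'\in[0,u]$. Plugging the area comparison into the previous integrand and taking square roots yields
\begin{equation*}
\bigl\|f(\tau,\cdot,\cdot)\bigr\|_{L^2(\Sigma_{\mathfrak t}^u)}\lesssim\frac{\rho(\mathfrak t,u)}{\tau}\,\|f\|_{L^2(\Sigma_\tau^u)},
\end{equation*}
and substituting this into the Minkowski bound gives precisely
\begin{equation*}
\|F\|_{L^2(\Sigma_{\mathfrak t}^u)}\lesssim\rho(\mathfrak t,u)\int_{t_0}^{\mathfrak t}\tau^{-1}\|f\|_{L^2(\Sigma_\tau^u)}\,d\tau.
\end{equation*}

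There is no real difficulty here: Minkowski's inequality handles the essential reduction, and the only nontrivial input is the quadratic-in-$\rho$ growth of the area element, which follows from the transport equation $L\slashed g_{AB}=2\chi_{AB}$ together with the already-proven smallness of $\chic$. The mildly delicate point, and where small $\de$ is used, is in the uniform comparison $\rho(\mathfrak t,u')\simeq\rho(\mathfrak t,u)$ and the absorption of the $\int\trgs\chic\,ds$ factor into a harmless constant; both are immediate from the bound $u\leq 4\de$ and \eqref{estimate of chi small}.
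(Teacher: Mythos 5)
Your proof is correct, and it follows the standard argument behind the cited analogues (Lemma 7.3 of [MY17], Lemma 12.57 of [Sp16]), which the paper itself invokes without giving a proof: Minkowski's integral inequality plus the comparison of area elements via $L\sqrt{\det\gs}=(\trgs\chi)\sqrt{\det\gs}$, $\trgs\chi=2\rho^{-1}+\trgs\chic$, and the bootstrap bound \eqref{estimate of chi small}. The uniform comparisons $\rho(\tau,u')\simeq\tau$ and $\rho(\mathfrak t,u')\simeq\rho(\mathfrak t,u)$ for $u'\in[0,4\de]$, $\tau\geq t_0$, are handled correctly, so nothing is missing.
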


Based on the preparations above, we are ready to derive the higher order $L^2$ estimates for some related quantities.
\begin{proposition}\label{Proposition higher order L^2 estimates}
Under the assumptions \eqref{BA}, when $\de>0$ is small, it holds that for $|\al|\leq 2N-6$,
 \begin{equation*}
  \begin{split}
   &\mathfrak t\de^l\|\Lies_{Z}^{\al}\chic\|_{L^2(\Si_\mathfrak t^u)}+\de^l\|\Lies_{Z}^{\al}{}^{(R_j)}\pis\|_{L^2(\Si_\mathfrak t^u)}+\de^l\|\Lies_{Z}^{\al}{}^{(R_j)}\pis_L\|_{L^2(\Si_\mathfrak t^u)}\lesssim
   M^p\de^{(1-\ve0)p+\frac{1}{2}}+\Theta_M^1(\mathfrak t,u),\\
   &\de^l\|Z^{\al+1}\check{L}^j\|_{L^2(\Si_\mathfrak t^u)}+\mathfrak t^{-1}\de^l\|Z^{\al+1}v_j\|_{L^2(\Si_\mathfrak t^u)}\lesssim M^p\de^{(1-\ve0)p+\frac{1}{2}}+\Theta_M^1(\mathfrak t,u),\\
   &\de^l\|\Lies_{Z}^{\al+1}\ds x^j\|_{L^2(\Si_\mathfrak t^u)}+\mathfrak t^{-1}\de^l\|\Lies_{Z}^{\al+1}R_j\|_{L^2(\Si_\mathfrak t^u)}\lesssim\de^{\frac{1}{2}}\mathfrak t+\Theta_M^1(\mathfrak t,u),\\
   &\de^l\|Z^{\al+1}\mu\|_{L^2(\Si_\mathfrak t^u)}\lesssim M^p\de^{(1-\ve0)p-\frac{1}{2}}\mathfrak t+\mathfrak t\Theta_M^2(\mathfrak t,u),\\
   &\de^l\|\Lies_{Z}^{\al}{}^{(R_j)}\pis_T\|_{L^2(\Si_\mathfrak t^u)}\lesssim M^p\de^{(1-\ve0)p+\frac{1}{2}}+\Theta_M^1(\mathfrak t,u),\\
   &\de^l\|\Lies_{Z}^{\al}{}^{(T)}\pis\|_{L^2(\Si_\mathfrak t^u)}\lesssim\de^{\frac{1}{2}}+\Theta_M^2(\mathfrak t,u),
  \end{split}
 \end{equation*}
where $l$ is the number of $T$ appearing in the string of $Z$, and
\begin{align*}
&\Theta_M^1(\mathfrak t,u)=M^{p-1}\de^{(1-\ve0)(p-1)}\big[\sqrt{\Et_{1,\leq|\al|+2}(\mathfrak t,u)}+\de\sqrt{\Et_{2,\leq|\al|+2}(\mathfrak t,u)}\big],\\
&\Theta_M^2(\mathfrak t,u)=M^{p-1}\de^{(1-\ve0)(p-1)}\big[\sqrt{\Et_{1,\leq|\al|+2}(\mathfrak t,u)}+\sqrt{\Et_{2,\leq|\al|+2}(\mathfrak t,u)}\big].
\end{align*}
\end{proposition}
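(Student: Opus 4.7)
My plan is to mimic the double (in fact triple) induction used in Proposition 5.1, first on $|\al|$, then on the number $l$ of $T$'s in $Z^\al$, and finally on the number of $\rho L$'s, but promoting every transport estimate from $L^\infty(\Si_\mathfrak t^u)$ to $L^2(\Si_\mathfrak t^u)$. The key difference from Proposition 5.1 is that when $\Lies_Z^\al$ lands entirely on the source terms of the transport equations for $\chic$, $\mu$, $\check L^j$, etc., one necessarily produces, in the curvature $\check{\R}_{LALB}$, two derivatives of $\vp$, or, in $Tc$ of \eqref{transport equation of mu}, a $T$-derivative of $\vp_0$. These top-order contributions cannot be absorbed by \eqref{BA}; instead one factor is retained in $L^2$ and bounded by the energies $E_{1,\leq|\al|+2}$, $E_{2,\leq|\al|+2}$, while the remaining $\vp_0^{p-1}$-type factors are placed in $L^\infty$ via \eqref{BA}, producing exactly the coefficient $M^{p-1}\de^{(1-\ve0)(p-1)}$ inside $\Theta_M^1$ and $\Theta_M^2$.

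Step 1 (all-rotational $\chic$ and $\mu$). Commuting $\Lies_{R_i}^\al$ through \eqref{transport equation of chi small}, the resulting $L$-transport equation for $\Lies_{R_i}^\al\chic$ has principal inhomogeneity $\Lies_{R_i}^\al\check{\R}_{LALB}$, expanded via \eqref{RLALB}-\eqref{RLALB small}. Estimating each summand by the $L^\infty$/$L^2$ split described above gives an $L^2(\Si_\tau^u)$ bound of the form $M^p\de^{(1-\ve0)p}\tau^{-p-1/2}\cdot\mathfrak t^{-1}+\tau^{-3}\Theta_M^1(\tau,u)$. Combined with the initial estimate $\|\chic\|_{L^2(\Si_{t_0}^u)}\lesssim M^p\de^{(1-\ve0)p+1/2}\mathfrak t^{-1}$ coming from Theorem 2.1 and \eqref{estimate of chi small t0}, and with the integration lemma (Lemma 5.2 analogue), Gronwall closes the stated bound on $\|\Lies_{R_i}^\al\chic\|_{L^2}$. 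The $\mu$-estimate is obtained by commuting $R_i^{\al+1}$ through \eqref{transport equation of mu}: the principal source $R_i^{\al+1}(cTc)$ engages one $T$-derivative (hence $E_2$ rather than $E_1$), and since $L\mu$ carries no $\mathfrak t^{-1}$ weight, the time-integration over $[t_0,\mathfrak t]$ produces an extra factor $\mathfrak t$, yielding $\de^l\|Z^{\al+1}\mu\|_{L^2}\lesssim M^p\de^{(1-\ve0)p-1/2}\mathfrak t+\mathfrak t\Theta_M^2$.

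Step 2 (algebraic reductions for remaining quantities). For $\check L^j$, $\check T^j$, $v_j$, $\ds x^j$, $R_j^k$, and the deformation tensors ${}^{(R_j)}\pis$, ${}^{(R_j)}\pis_L$, ${}^{(R_j)}\pis_T$, ${}^{(T)}\pis$ I would exploit the structure equation $\ds_A\check L^j=\check\chi_{AB}\ds^Bx^j$ together with the definitions $v_j=\ep_{ijk}x^j\check T^k$, $R_j=\Om_j-v_j\Tr$ and the explicit formulas \eqref{L pi}-\eqref{Ri pi}, which represent each deformation tensor as a polynomial in $\chi$, $\mu$, $\ds\mu$, $\check L^i$, $\check T^i$, $v_i$. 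Applying $\Lies_{R_i}^\al$ and Leibniz, at most one factor is at top order and is estimated by Step 1, while the remaining factors are controlled in $L^\infty$ via Proposition 5.1. Subsequently, $T$-derivatives are inserted by induction on $l$ using \eqref{transport equation of chi along T}, \eqref{TLi}, and the $LT\mu$-equation derived in Section 3, each of which converts one new $T$ into an $L$-transport of a lower-$l$ quantity already estimated. Finally, $\rho L$-derivatives are handled directly via the transport equations themselves, using $[\rho L,\Lies_{R_i}^{\al-1}]$ and \eqref{transport equation of chi small}/\eqref{transport equation of mu} in the same manner.

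The main obstacle is not the architecture of the argument but the sharp book-keeping of $\de$-powers and $\mathfrak t$-decay. The critical exponent $p_c=\f{1}{1-\ve0}$ is dictated precisely by the condition $(1-\ve0)p>1$ needed to render $\de^{(1-\ve0)p-1}$ multiplied by the time-integration factor from the energy identity \eqref{energy inequality} uniformly small; at the $L^2$-level the analogous requirement emerges when one integrates $\Theta_M^1$ in $\tau$ against the weights generated by the multipliers $\rho^{2s}L$ and $\Lb$. One must therefore track, for every term in the expansion of $\Lies_{R_i}^\al\check{\R}_{LALB}$ and of $R_i^{\al+1}(cTc)$, how many $\vp_0$-factors are placed in $L^\infty$ (each contributing $M\de^{1-\ve0}$) versus in $L^2$ (reserving exactly one $\vp$-derivative for the energy slot), so that the resulting $\de$-exponent saturates the bound stated in the proposition. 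Once this accounting is done uniformly across all terms, the closure of the induction is essentially mechanical.
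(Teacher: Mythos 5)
Your proposal follows essentially the same route as the paper's proof: induction on $|\al|$ (rotational fields first, then $T$'s, then $\rho L$), transport equations for $\chic$ and $\mu$ with the $L^\infty$/$L^2$ splitting that reserves the single top-order ($(|\al|+2)$-th) derivative of $\vp$ for the energy slot while the remaining $\vp_0^{p-1}$ factors are absorbed via \eqref{BA}, algebraic reduction of $\check L^j$, $v_j$, $\ds x^j$, $R_j$ and the deformation tensors to $\chic$ and $\mu$, and the correct identification of why the $\mu$-bound carries $\Theta_M^2$ and an extra factor $\mathfrak t$. The plan is sound and matches the paper's argument; only the routine bookkeeping of the intermediate $\tau$-exponents remains to be carried out.
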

\begin{proof}
We will prove this proposition by the induction method with respect to the index $\al$. As in the proof of Proposition \ref{Proposition higher order L^infty estimates}, we first prove the results which only involve the rotational vector fields.

When $\al=0$, in view of Proposition \ref{Proposition higher order L^infty estimates}, the corresponding $L^2$ estimates can be directly obtained by the fact $\|1\|_{L^2(\Si_\mathfrak t^u)}\lesssim\de^{1/2}\mathfrak t$ (similar to Corollary 11.30.3 in \cite{Sp16}). For example, $\|\chic\|_{L^2(\Si_\mathfrak t^u)}\lesssim \|\check\chi\|_{L^{\infty}(\Si_\mathfrak t^u)}\cdot\|1\|_{L^2(\Si_\mathfrak t^u)}\lesssim M^p\de^{(1-\ve0)p+\frac{1}{2}}\mathfrak t^{-1}$.

Assume that Proposition \eqref{Proposition higher order L^2 estimates} holds for the index $\al$ ($|\al|=k\leq 2N-7$) and $Z\in\{R_1, R_2, R_3\}$.
For the treatment on the case $|\al|=k+1$, our concrete strategies are to take
 the $L^2$ norms for the factors equipped with the highest order derivatives of the related quantities,
 meanwhile applying Proposition \ref{Proposition higher order L^infty estimates} to estimate
 the corresponding $L^{\infty}$ coefficients in these terms.

\begin{enumerate}
	\item
\textbf{Bounds of $\Lies_{R_i}^{\al}\chic$ and ${\Lies_{R_i}^{\al}}{}^{(R_j)}\pis_L$}

With the help of induction assumption, then by \eqref{Ri pi} and Lemma \ref{Lemma 1 in higher order L^2 estimates},
\begin{equation}\label{RpiL}
\begin{split}
&\|{\Lies_{R_i}^{\al}}{}^{(R_j)}\pis_L\|_{L^2(\Si_{\mathfrak t}^u)}\\
\lesssim&\mathfrak t\|\Lies_{R_i}^{\leq\al}\chic\|_{L^2(\Si_{\mathfrak t}^u)}+M^p\de^{(1-\ve0)p}\mathfrak t^{-2}\|\Lies_{R_i}^{\leq\al}R_j\|_{L^2(\Si_{\mathfrak t}^u)}+\|R_i^{\leq\al}\check{L}^j\|_{L^2(\Si_{\mathfrak t}^u)}\\
&+M^p\de^{(1-\ve0)p}\mathfrak t^{-1}\|\Lies_{R_i}^{\leq\al}\ds x^j\|_{L^2(\Si_{\mathfrak t}^u)}+M^p\de^{(1-\ve0)p}\mathfrak t^{-(p+1)}\|R_i^{\leq\al}v_j\|_{L^2(\Si_{\mathfrak t}^u)}\\
&+M^{2p-1}\de^{(1-\ve0){(2p-1)}}\mathfrak t^{-p}\|R_i^{\leq\al+1}\varphi\|_{L^2(\Si_{\mathfrak t}^u)}\\
\lesssim&\mathfrak t\|\Lies_{R_i}^{\al}\chic\|_{L^2(\Si_{\mathfrak t}^u)}+M^p\delta^{(1-\ve0)p+\f12}\\
&+M^{p-1}\de^{(1-\ve0)(p-1)}\big[\sqrt{\Et_{1,\leq|\al|+2}(\mathfrak t,u)}+\delta\sqrt{\Et_{2,\leq|\al|+2}(\mathfrak t,u)}\big].
\end{split}
\end{equation}
Thus, it follows from \eqref{commute Lies_Ri^al with L chic} and induction assumption that
\begin{equation}\label{L^2 estimate of Lies_L Lies_Ri^al chic}
\begin{split}
&\|\Lies_L\Lies_{R_i}^{\al}\chic\|_{L^2(\Si_{\mathfrak t}^u)}\\
\lesssim& M^p\de^{(1-\ve0)p}\tau^{-2}\|\Lies_{R_i}^{\al}\chic\|_{L^2(\Si_{\mathfrak t}^u)}+M^{2p}\delta^{(1-\ve0)2p+\f12}\mathfrak t^{-3}\\
&+M^{p-1}\de^{(1-\ve0)(p-1)}\mathfrak t^{-2}\big\{\mathfrak t^{-s}\sqrt{\Et_{1,\leq|\al|+2}(\mathfrak t,u)}+\de\mathfrak t^{-1}\sqrt{\Et_{2,\leq|\al|+2}(\mathfrak t,u)}\big\}.
\end{split}
\end{equation}

Recalling \eqref{weighted transport equation of Lies_{R_i}^{al} chic}, one then has
 \begin{equation}\label{transport inequality of Lies_{Ri}^{al} chic}
  |L(\rho^2|\Lies_{R_i}^{\al}\chic|)|\leq 2|\chic|\cdot\rho^2|\Lies_{R_i}^{\al}\chic|+\rho^2|\Lies_L\Lies_{R_i}^{\al}\chic|.
 \end{equation}
Integrating \eqref{transport inequality of Lies_{Ri}^{al} chic} and utilizing Lemma \ref{Lemma 2 in higher order L^2 estimates}
yield
 \begin{equation*}
  \begin{split}
   \|\rho^2\Lies_{R_i}^{\al}\chic\|_{L^2(\Si_\mathfrak t^u)}
   &\lesssim\|\rho^2\Lies_{R_i}^{\al}\chic\|_{L^2(\Si_{t_0}^u)}+\rho\int_{t_0}^\mathfrak t\tau^{-1}\|L(\rho^2|\Lies_{R_i}^{\al}\chic|)\|_{L^2(\Si_{\tau}^u)}\d\tau.
  \end{split}
 \end{equation*}
Hence,
 \begin{equation}\label{preliminary L^2 estimate of chi small}
  \|\rho\Lies_{R_i}^{\al}\chic\|_{L^2(\Si_\mathfrak t^u)}\lesssim M^p\de^{(1-\ve0)p+\frac{1}{2}}+\int_{t_0}^\mathfrak t\tau\|\Lies_L\Lies_{R_i}^{\al}\chic\|_{L^2(\Si_{\tau}^u)}\d\tau.
 \end{equation}

Substituting \eqref{L^2 estimate of Lies_L Lies_Ri^al chic} into \eqref{preliminary L^2 estimate of chi small}, together with Gronwall's inequality, we have
 \begin{equation*}
  \begin{split}
   \|\rho\Lies_{R_i}^{\al}\chic\|_{L^2(\Si_\mathfrak t^u)}
   &\lesssim M^p\de^{(1-\ve0)p+\frac{1}{2}}+M^{p-1}\de^{(1-\ve0)(p-1)}\big[\sqrt{\Et_{1,\leq|\al|+2}(\mathfrak t,u)}+\de \sqrt{\Et_{2,\leq|\al|+2}(\mathfrak t,u)}\big].
  \end{split}
 \end{equation*}
Therefore,
 \begin{equation}\label{alchi}
  \begin{split}
   \|\Lies_{R_i}^{\al}\chic\|_{L^2(\Si_{\mathfrak t}^u)}\lesssim M^p\de^{(1-\ve0)p+\frac{1}{2}}\mathfrak t^{-1}+M^{p-1}\de^{(1-\ve0)(p-1)}\mathfrak t^{-1}\big[\sqrt{\Et_{1,\leq|\al|+2}(\mathfrak t,u)}+\de\sqrt{\Et_{2,\leq|\al|+2}(\mathfrak t,u)}\big],
  \end{split}
 \end{equation}
 which also gives the estimate of $L^2$ norm for ${\Lies_{R_i}^{\al}}{}^{(R_j)}\pis_L$ by \eqref{RpiL}.

\item\textbf{Bounds of $R_i^{\al+1}\check{L}^j, R_i^{\al+1}v_j,\Lies_{R_i}^{\al+1}\ds x^j,\Lies_{R_i}^{\al+1}R_j$, $\Lies_{R_i}^{\al}{}^{(R_j)}\pis$ and $\Lies_{R_i}^{\al}{}^{(T)}\pis$}

In the proof of Proposition \ref{Proposition higher order L^infty estimates}, one has known that $R_i^{\al+1}\check{L}^j=R_i^{\al}(R_i^A\chic_{AB}\ds^Bx^j)$. Then by \eqref{alchi} and induction argument,
 \begin{equation*}
  \begin{split}
   &\|R_i^{\al+1}\check{L}^j\|_{L^2(\Si_{\mathfrak t}^u)}\\
   \lesssim&\mathfrak t\|\Lies_{R_i}^{\leq\al}\chic\|_{L^2(\Si_{\mathfrak t}^u)}+M^p\de^{(1-\ve0)p}\mathfrak t^{-1}\|\Lies_{R_i}^{\leq\al}\ds x^j\|_{L^2(\Si_{\mathfrak t}^u)}\\
   &+M^p\de^{(1-\ve0)p}\mathfrak t^{-2}\|\Lies_{R_i}^{\leq\al}R_j\|_{L^2(\Si_{\mathfrak t}^u)}+M^p\de^{(1-\ve0)p}\mathfrak t^{-1}\|\Lies_{R_i}^{\leq\al-1}{}^{(R_j)}\pis\|_{L^2(\Si_{\mathfrak t}^u)}\\
   \lesssim& M^p\de^{(1-\ve0)p+\frac{1}{2}}+M^{p-1}\de^{(1-\ve0)(p-1)}\big[\sqrt{\Et_{1,\leq|\al|+2}(\mathfrak t,u)}+\de\sqrt{\Et_{2,\leq|\al|+2}(\mathfrak t,u)}\big].
  \end{split}
 \end{equation*}
 For the other quantities $R_i^{\al+1}v_j, \Lies_{R_i}^{\al+1}\ds x^j, \Lies_{R_i}^{\al+1}R_j$, $\Lies_{R_i}^{\al}{}^{(R_j)}\pis$ and $\Lies_{R_i}^{\al}{}^{(T)}\pis$, their $L^2$ norms over $\Si_{\mathfrak t}^u$ can be similarly obtained
 by the procedure of estimating $\|R_i^{\al+1}\check{L}^j\|_{L^2(\Si_{\mathfrak t}^u)}$.

\item \textbf{Bounds of $R_i^{\al+1}\mu$ and $\Lies_{R_i}^{\al}{}^{(R_j)}\pis_T$}

According to \eqref{Ri pi}, by induction argument and Lemma \ref{Lemma 1 in higher order L^2 estimates},
we obtain
\begin{equation}\label{RpiT}
\begin{split}
&\|\Lies_{R_i}^{\al}{}^{(R_j)}\pis_T\|_{L^2(\Si_{\mathfrak t}^u)}\\
\lesssim&M^p\delta^{(1-\ve0)p}\mathfrak t^{-1}\|R_i^{\leq\al+1}\mu\|_{L^2(\Si_{\mathfrak t}^u)}+\mathfrak t\|\Lies_{R_i}^{\leq\al}\check\chi\|_{L^2(\Si_{\mathfrak t}^u)}+M^p\delta^{(1-\ve0)p}\mathfrak t^{-2}\|\Lies_{R_i}^{\leq\al}R_j\|_{L^2(\Si_{\mathfrak t}^u)}\\
&+M^p\delta^{(1-\ve0)p}\big\{\mathfrak t^{-p}\|\Lies_{R_i}^{\leq\al-1}{}^{(R_j)}\pis\|_{L^2(\Si_{\mathfrak t}^u)}+\mathfrak t^{-1}\|\Lies_{R_i}^{\leq\al}\ds x^j\|_{L^2(\Si_{\mathfrak t}^u)}\big\}+\|R_i^{\leq\al}\check L^j\|_{L^2(\Si_{\mathfrak t}^u)}\\
&+M^p\delta^{(1-\ve0)p-1}\mathfrak t^{-1}\|R_i^{\leq\al}v_i\|_{L^2(\Si_{\mathfrak t}^u)}+M^{p-1}\delta^{(1-\ve0)(p-1)}\mathfrak t^{-1}\|R^{\leq\al+1}\varphi\|_{L^2(\Si_{\mathfrak t}^u)}\\
\lesssim&M^p\delta^{(1-\ve0)p}\mathfrak t^{-1}\|R_i^{\al+1}\mu\|_{L^2(\Si_{\mathfrak t}^u)}+M^p\delta^{(1-\ve0)p+\f12}\\
&+M^{p-1}\delta^{(1-\ve0)(p-1)}\big[\sqrt{\Et_{1,\leq|\al|+2}(\mathfrak t,u)}+\de\sqrt{\Et_{2,\leq|\al|+2}(\mathfrak t,u)}\big].
\end{split}
\end{equation}
This means that once the bound of $\|R_i^{\al+1}\mu\|_{L^2(\Si_{\mathfrak t}^u)}$ is obtained, then the estimate of $\|\Lies_{R_i}^{\al}{}^{(R_j)}\pis_T\|_{L^2(\Si_{\mathfrak t}^u)}$ comes naturally.

Similarly to $\chic$, by Lemma \ref{Lemma 2 in higher order L^2 estimates}, one has
 \begin{equation*}
  \begin{split}
   \|R_i^{\al+1}\mu\|_{L^2(\Si_\mathfrak t^u)}
   &\lesssim\|R_i^{\al+1}\mu\|_{L^2(\Si_{t_0}^u)}+\rho\int_{t_0}^\mathfrak t\tau^{-1}\|LR_i^{\al+1}\mu\|_{L^2(\Si_{\tau}^u)}\d\tau\\
   &\lesssim M^p\de^{(1-\ve0)p-\f12}\mathfrak t+\rho\int_{t_0}^\mathfrak t\tau^{-1}\|LR_i^{\al+1}\mu\|_{L^2(\Si_{\tau}^u)}\d\tau.
  \end{split}
 \end{equation*}
Hence,
 \begin{equation}\label{preliminary L^2 estimate of mu}
  \|\mathfrak t^{-1}R_i^{\al+1}\mu\|_{L^2(\Si_\mathfrak t^u)}\lesssim M^p\de^{(1-\ve0)p-\frac{1}{2}}+\int_{t_0}^\mathfrak t\tau^{-1}\|LR_i^{\al+1}\mu\|_{L^2(\Si_{\tau}^u)}\d\tau.
 \end{equation}

Next, we estimate $\|LR_i^{\al+1}\mu\|_{L^2(\Si_{\tau}^u)}$ in \eqref{preliminary L^2 estimate of mu}.
Recalling \eqref{commute Ri^al with L mu}, \eqref{transport equation of mu} and $[L,R_i]={}^{(R_i)}\pis_{LA}X^A$,
then by \eqref{RpiT}, we have
 \begin{equation}\label{I1}
  \begin{split}
   &\|R_i^{\al+1}L\mu\|_{L^2(\Si_{\tau}^u)}\\
   \lesssim&(M\de^{1-\ve0}\tau^{-1})^{p-1}\|TR_i^{\leq\al+1}\vp\|_{L^2(\Si_{\tau}^u)}
   +M^p\delta^{(1-\ve0)p}\tau^{-(p+1)}\|\Lies_{R_i}^{\al}{}^{(R_j)}\pis_T\|_{L^2(\Si_{\tau}^u)}\\
   &+(M\de^{1-\ve0}\tau^{-1})^{p-1}\|R_i^{\leq\al+1}L\vp\|_{L^2(\Si_{\tau}^u)}
   +(M\de^{1-\ve0}\tau^{-1})^{p-1}\de^{-1}\|R_i^{\leq\al+1}\varphi\|_{L^2(\Si_{\tau}^u)}\\
   &+M^p\de^{(1-\ve0)p}\tau^{-(p+1)}\|R_i^{\leq\al+1}\mu\|_{L^2(\Si_{\tau}^u)}\\
   \lesssim&M^p\de^{(1-\ve0)p}\tau^{-(p+1)}\|R_i^{\al+1}\mu\|_{L^2(\Si_{\tau}^u)}+M^{2p}\delta^{(1-\ve0)2p-\f12}\tau^{-p}\\
   &+M^{p-1}\de^{(1-\ve0)(p-1)}\tau^{-(p-1)}\big[\tau^{-s}\sqrt{\Et_{1,\leq|\al|+2}(\tau,u)}+\sqrt{\Et_{2,\leq|\al|+2}(\tau,u)}\big]
  \end{split}
 \end{equation}
and
 \begin{equation}\label{I2}
  \begin{split}
   &\|\sum_{\be_1+\be_2=\al}R_i^{\be_1}[L,R_i]R_i^{\be_2}\mu\|_{L^2(\Si_{\tau}^u)}\\
   \lesssim&M^p\de^{(1-\ve0)p}\tau^{-2}\|R_i^{\al+1}\mu\|_{L^2(\Si_{\tau}^u)}+M^{2p}\delta^{2p(1-\ve0)-\f12}\tau^{-1}\\
   &+M^{2p-1}\de^{(1-\ve0)(2p-1)-1}\tau^{-1}\big[\sqrt{\Et_{1,\leq|\al|+2}(\tau,u)}+\sqrt{\Et_{2,\leq|\al|+2}(\tau,u)}\big].
  \end{split}
 \end{equation}
Substituting \eqref{I1} and \eqref{I2} into \eqref{preliminary L^2 estimate of mu}, utilizing  Gronwall's inequality, one has
 \begin{equation*}
  \|\mathfrak t^{-1}R_i^{\al+1}\mu\|_{L^2(\Si_\mathfrak t^u)}\lesssim M^p\de^{(1-\ve0)p-\frac{1}{2}}+M^{p-1}\de^{(1-\ve0)(p-1)}\big[\sqrt{\Et_{1,\leq|\al|+2}(\mathfrak t,u)}+\sqrt{\Et_{2,\leq|\al|+2}(\mathfrak t,u)}\big].
 \end{equation*}
Hence, the estimates of $\|R_i^{\al+1}\mu\|_{L^2(\Si_\mathfrak t^u)}$ and $\|\Lies_{R_i}^{\al}{}^{(R_j)}\pis_T\|_{L^2(\Si_{\mathfrak t}^u)}$ are obtained with help of \eqref{RpiT}.
\end{enumerate}

If there are $T$ or $\rho L$ in $Z^{\al+1}$, we could use the identities in
Lemma \ref{3.6}, \eqref{T pi}, \eqref{Ri pi} e.t.c. to get the corresponding $L^2$ bounds.
Since the treatments are analogous to those in the end of proof for Proposition \ref{Proposition higher order L^infty estimates},
we omit the details here.
\end{proof}

\section{Top order $L^2$ estimates for the derivatives of $\slashed\nabla\chi$ and $\slashed\nabla^2\mu$}\label{Section 6}

When we try to close the energy estimate in Section \ref{Section 7} below, it is found that the
top orders of derivatives of $\varphi$, $\chi$ and $\mu$ for the energy estimates are $2N-4$, $2N-5$ and $2N-4$
respectively. However, as shown in Proposition \ref{Proposition higher order L^2 estimates}, the $L^2$ estimates
for the $(2N-5)^{\text{th}}$ order derivatives of $\chi$ and $(2N-4)^{\text{th}}$ order derivatives of $\mu$ can
be controlled by the $(2N-3)^{\text{th}}$ order energy of $\varphi$. So there is a mismatch here.
To overcome this difficulty, we need to
deal with $\chi$ and $\mu$ with the corresponding top order derivatives.
Once the estimate of $\slashed\triangle\mu$ is established, we can use the elliptic estimate to
treat $\slashed\nabla^2\mu$. Before making the elliptic estimates, we shall estimate the
Gaussian curvature $\mathcal G$ of $\slashed g$ firstly.

\begin{lemma}\label{Gc}
Under the assumptions \eqref{BA}, when $\de>0$ is small, the Gaussian curvature $\mathcal{G}$ of $\gs$ in $D^{\mathfrak t,u}$ satisfies
 \begin{equation}\label{estimate of Gaussian curvature}
  \mathcal{G}=\big(1+O(M^{p}\de^{(1-\ve0)p}\rho^{-1})\big)\rho^{-2}.
 \end{equation}
\end{lemma}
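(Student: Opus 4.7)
The plan is to apply the classical Gauss equation for the spacelike two-surface $S_{\mathfrak t,u}\subset(\mathbb R^{1+3},g)$, using two simplifying features of the present setup. First, by Lemma~\ref{Lemma connection coefficients} one has $\D_{X_A}X_B=\nas_A X_B+\mu^{-1}\chi_{AB}T$, so the vector-valued second fundamental form of $S_{\mathfrak t,u}$ is rank-one in the normal bundle and points purely along $T$. Since $g(T,T)=c^{-2}\mu^2$, the Gauss correction takes the simple form
\begin{equation*}
\langle II(X_A,X_C),II(X_B,X_D)\rangle-\langle II(X_A,X_D),II(X_B,X_C)\rangle=c^{-2}\bigl(\chi_{AC}\chi_{BD}-\chi_{AD}\chi_{BC}\bigr).
\end{equation*}
Second, by \eqref{R0i0j} the only nonzero components of the ambient Riemann tensor in the Cartesian frame are $\R_{i0j0}$, whereas $X_A=X_A^i\p_i$ is purely spatial; hence $\R(X_A,X_B,X_C,X_D)\equiv 0$. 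Combining these with the two-dimensional identity $\slashed{\R}_{ABCD}=\mathcal G(\gs_{AC}\gs_{BD}-\gs_{AD}\gs_{BC})$ collapses the Gauss equation to the compact formula
\begin{equation*}
\mathcal G=\tfrac{1}{2}c^{-2}\bigl((\trgs\chi)^2-|\chi|_{\gs}^{\,2}\bigr).
\end{equation*}

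Next I would substitute the splitting $\chi=\rho^{-1}\gs+\chic$ from Definition~\ref{Definition error vectors and trace-free vectors}, which cleanly extracts the leading part:
\begin{equation*}
\mathcal G=\frac{c^{-2}}{\rho^2}+\frac{c^{-2}\trgs\chic}{\rho}+\tfrac{1}{2}c^{-2}\bigl((\trgs\chic)^2-|\chic|^2\bigr).
\end{equation*}
To close, I would feed in the pointwise estimates already in hand: $|c^{-2}-1|=|\vp_0^p|\lesssim M^p\de^{(1-\ve0)p}\mathfrak t^{-p}$ from \eqref{BA}, and $|\chic|\lesssim M^p\de^{(1-\ve0)p}\mathfrak t^{-2}$ from \eqref{estimate of chi small}. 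Since $\rho\sim\mathfrak t$ throughout $D^{\mathfrak t,u}$ and $p\ge 2$, each of the three correction terms in the displayed formula is bounded by $M^p\de^{(1-\ve0)p}\rho^{-3}$, i.e.\ of relative size $M^p\de^{(1-\ve0)p}\rho^{-1}$ when compared with the leading $\rho^{-2}$, which is precisely the form of \eqref{estimate of Gaussian curvature}.

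The only delicate point is fixing the overall sign and normalization of the Gauss equation in this null-frame setup, where the single ``effective'' normal direction $T$ is spacelike; I would sanity-check the final formula against the flat Minkowski case ($c\equiv 1$, $\chi=\rho^{-1}\gs$, $\R\equiv 0$), in which it correctly reproduces $\mathcal G=\rho^{-2}$. Apart from this bookkeeping, the proof is a direct consequence of the $L^\infty$ bounds established in Section~\ref{Section 3}, without appealing to the higher-order $L^2$ machinery of Section~\ref{Section 5.2}.
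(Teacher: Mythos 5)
Your proposal is correct and follows essentially the same route as the paper: both rest on the identity $\mathcal G=\frac12 c^{-2}\big((\trgs\chi)^2-|\chi|^2\big)$, then substitute $\chi=\rho^{-1}\gs+\chic$ and invoke \eqref{BA} and \eqref{estimate of chi small} to absorb the corrections into $O(M^p\de^{(1-\ve0)p}\rho^{-1})\rho^{-2}$. The only difference is that you derive that identity from the Gauss equation (legitimately, since by Lemma \ref{Lemma connection coefficients} the second fundamental form points purely along $T$ with $g(T,T)=c^{-2}\mu^2$, and $\mathscr R$ vanishes on the purely spatial vectors $X_A$ by \eqref{R0i0j}), whereas the paper simply cites it as (2.29) of \cite{MY17}.
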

\begin{proof}
By (2.29) in \cite{MY17},
 \begin{equation}\label{Gaussian curvature}
  \mathcal{G}=\frac{1}{2}c^{-2}\big((\trgs\chi)^2-|\chi|^2\big).
 \end{equation}
Together with Lemma \ref{Lemma relation of error vectors and trace-free vectors} and \eqref{estimate of chi small}, it follows that
 \begin{equation*}
  \begin{split}
   \mathcal{G}
   &=\frac{1}{2}c^{-2}\big[2\rho^{-2}+(\trgs\chic)^2+2\rho^{-1}\trgs\chic-|\chic|^2\big]\\
   &=\big(1+O(M^{p}\de^{(1-\ve0)p}\rho^{-1})\big)\rho^{-2}.
  \end{split}
 \end{equation*}
\end{proof}

With Lemma \ref{Gc}, we can get the following two lemmas for the elliptic estimates.

\begin{lemma}
For any trace-free symmetric 2-tensor $\xi$ on $S_{\mathfrak t,u}$, it holds that
 \begin{equation}\label{elliptic estimate 1}
  \int_{S_{\mathfrak t,u}}\big(|\nas\xi|^2+2\mathcal{G}|\xi|^2\big)=\int_{S_{\mathfrak t,u}}2|\divgs\xi|^2.
 \end{equation}
For any function $f\in C^2(D^{\mathfrak t,u})$, it holds that
 \begin{equation}\label{elliptic estimate 2}
  \int_{S_{\mathfrak t,u}}\big(|\nas^2f|^2+\mathcal{G}|\ds f|^2\big)=\int_{S_{\mathfrak t,u}}(\Des f)^2.
 \end{equation}
\end{lemma}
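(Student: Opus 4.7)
The plan is to establish both identities as Bochner--Weitzenb\"ock-type formulas on the closed 2-surface $S_{\mathfrak t,u}$. The crucial 2-dimensional input, available through Lemma \ref{Gc}, is that the intrinsic Ricci curvature of $\gs$ satisfies $\operatorname{Ric}(\gs)=\mathcal{G}\gs$ (equivalently, the full intrinsic Riemann tensor of $\gs$ is completely pinned down by $\mathcal{G}$). Combined with the fact that $S_{\mathfrak t,u}$ has no boundary so every total divergence integrates to zero, this is what upgrades the pointwise Bochner identities into the stated global equalities.

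For \eqref{elliptic estimate 2}, I would start from the pointwise scalar Bochner identity
\[
\tfrac{1}{2}\Des(|\ds f|^{2})=|\nas^{2}f|^{2}+\gs^{AB}\ds_{A}f\cdot\ds_{B}(\Des f)+\operatorname{Ric}(\ds f,\ds f).
\]
Integrating over $S_{\mathfrak t,u}$ annihilates the left-hand side; substituting $\operatorname{Ric}(\ds f,\ds f)=\mathcal G|\ds f|^{2}$ and integrating the middle term on the right by parts once to produce $-\int(\Des f)^{2}$ gives \eqref{elliptic estimate 2} directly.

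For \eqref{elliptic estimate 1}, the plan is to combine two intermediate identities. The first is obtained by writing $\int 2|\divgs\xi|^{2}=-2\int\xi^{AB}\nas_{A}\nas^{C}\xi_{CB}$ (one integration by parts), then commuting $\nas_{A}$ past $\nas^{C}$: the commutator $[\nas_{A},\nas^{C}]\xi_{CB}$, evaluated using the 2D curvature rigidity and simplified via the symmetry and trace-freeness of $\xi$ (many trace-type contractions collapse), produces a clean $\mathcal{G}\xi_{AB}$-type term; a second integration by parts then converts the remaining double-derivative expression into the mixed contraction $\int\nas^{C}\xi_{AB}\nas^{A}\xi_{C}{}^{B}$, yielding an identity relating this contraction, $\int|\divgs\xi|^{2}$, and $\int\mathcal{G}|\xi|^{2}$. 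The second intermediate identity is the purely algebraic pointwise equality
\[
|\nas\xi|^{2}=|\divgs\xi|^{2}+\nas^{C}\xi_{AB}\nas^{A}\xi_{C}{}^{B}
\]
for any symmetric trace-free 2-tensor $\xi$ on a 2-manifold; it reflects the orthogonal $O(2)$-decomposition of $\nas\xi$ into a totally symmetric trace-free piece and a residual piece determined by $\divgs\xi$, with the cross-term killed by trace-freeness. Combining the two relations produces \eqref{elliptic estimate 1}. The only care required is the tensor bookkeeping in the commutator step of \eqref{elliptic estimate 1}; there is no genuine analytic obstacle, since both identities are pure local tensor algebra combined with Stokes' theorem on the closed surface.
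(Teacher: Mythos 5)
Your argument is correct, but it is not the route the paper takes: the paper's entire proof of this lemma is a one-line citation (``taking $\mu=1$ in (18.23) and (18.12) of \cite{Sp16}''), so both identities are simply imported from Speck's monograph rather than derived. What you supply is essentially the standard self-contained derivation sitting behind that citation: the scalar Bochner identity integrated over the closed surface $S_{\mathfrak t,u}$ for \eqref{elliptic estimate 2}, and for \eqref{elliptic estimate 1} the double integration by parts with a Ricci commutation, combined with the two-dimensional pointwise decomposition $|\nas\xi|^2=|\divgs\xi|^2+\nas^C\xi_{AB}\nas^A\xi_C{}^B$ for symmetric trace-free $\xi$ (equivalently, the fact that the antisymmetrized derivative of such a $\xi$ is the dual of its divergence). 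This is exactly the Christodoulou--Klainerman computation and is valid; the paper's approach buys only brevity, while yours makes the lemma independent of the external reference. One small correction: the relation $\operatorname{Ric}(\gs)=\mathcal{G}\gs$ is not ``available through Lemma \ref{Gc}'' --- it is the automatic curvature rigidity of any two-dimensional Riemannian metric. Lemma \ref{Gc} only computes and estimates the value of $\mathcal{G}$ (in particular its positivity), which is what is needed afterwards to turn these integral identities into coercive elliptic estimates, but it plays no role in proving the identities themselves. Also be careful with the sign bookkeeping in the commutator step of \eqref{elliptic estimate 1}: with a wrong curvature convention one lands on $-2\mathcal{G}$ instead of $+2\mathcal{G}$, and the correct sign is the one stated (it is what forces a divergence-free trace-free $\xi$ to vanish when $\mathcal{G}>0$).
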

\begin{proof}
Taking $\mu=1$ in (18.23) and (18.12) of \cite{Sp16}, then \eqref{elliptic estimate 1} and \eqref{elliptic estimate 2} hold immediately.
\end{proof}

Next, we deal with $\slashed\nabla\chi$ and $\slashed\nabla^2\mu$.

\subsection{Estimates on the derivatives of $\slashed\nabla\chi$}\label{Section 6.1}

Recalling \eqref{RLALB small} and \eqref{RLALB}, we now define
 \begin{equation}\label{RLL small}
  \check{\R}_{LL}=\gs^{AB}\check{\R}_{LALB}=-\frac{1}{2}pc^4\vp_0^{p-1}\Des\vp_0+R_0,
 \end{equation}
where
 \begin{equation}\label{R0}
  R_0=-\frac{3}{2}pc^3\vp_0^{p-1}\ds c\cdot\ds\vp_0-\frac{1}{2}p(p-1)c^4\vp_0^{p-2}|\ds\vp_0|^2.
 \end{equation}
By \eqref{transport equation of chi} and Lemma \ref{Lemma relation of error vectors and trace-free vectors}, then
 \begin{equation}\label{transport equation of trgs chi}
  \begin{split}
   L\trgs\chi=(c^{-1}Lc-2\rho^{-1})\trgs\chi+2\rho^{-2}-|\chic|^2-\check{\R}_{LL}.
  \end{split}
 \end{equation}
Note that the highest order derivative of $\vp_0$ in the right hand side of \eqref{RLL small}
 is $\Des\vp_0$. We will replace $\Des\vp_0$ in $\check{\R}_{LL}$ with $L(\p\vp)+\lot$,
where and below ``$\lot$" stands for the phrase ``lower order terms". Indeed,
 by \eqref{transport equation of Lb vp0} and \eqref{H0}, we have
 \begin{equation}\label{Des vp0}
  \Des\vp_0=\mu^{-1}(L\Lb\vp_0+\trgs\chi T\vp_0-\tilde H_0+F_0).
 \end{equation}
Hence,
 \begin{equation}\label{RLL small replaced}
  \begin{split}
   \check{\R}_{LL}=-LE_{\chi}-e_{\chi}-\frac{1}{2}pc^4\mu^{-1}\vp_0^{p-1}T\vp_0\trgs\chi,
  \end{split}
 \end{equation}
where
 \begin{equation}\label{E chi and e chi}
  \begin{split}
   E_{\chi}&=\frac{1}{2}pc^4\mu^{-1}\vp_0^{p-1}\Lb\vp_0,\\
   e_{\chi}&=-L(\frac{1}{2}pc^4\mu^{-1}\vp_0^{p-1})\Lb\vp_0-\frac{1}{2}pc^4\mu^{-1}\vp_0^{p-1}(\tilde H_0-F_0)-R_0.
  \end{split}
 \end{equation}
Substituting \eqref{RLL small replaced} into \eqref{transport equation of trgs chi} yields
 \begin{equation}\label{modified transport equation of trgs chi}
  L(\trgs\chi-E_{\chi})=(\mu^{-1}L\mu-2\rho^{-1})\trgs\chi+2\rho^{-2}-|\chic|^2+e_{\chi}.
 \end{equation}

Let $E_{\chi}^{\al}=\ds\bar Z^{\al}(\trgs\chi-E_{\chi})$ with $\bar Z\in\{T,R_1,R_2,R_3\}$. Then by the induction argument on \eqref{modified transport equation of trgs chi}, we have
 \begin{equation}\label{higher order modified transport equation of E chi}
  \begin{split}
   \Lies_LE_{\chi}^{\al}=(\mu^{-1}L\mu-2\rho^{-1})E_{\chi}^{\al}+(\mu^{-1}L\mu-2\rho^{-1})\ds\bar Z^{\al}E_{\chi}-\ds\bar Z^{\al}(|\chic|^2)+e_{\chi}^{\al},
  \end{split}
 \end{equation}
where
 \begin{equation}\label{higher order e chi}
  \begin{split}
   e_{\chi}^{\al}&=\Lies_{\bar Z}^{\al}e_{\chi}^0+\sum_{\substack{\be_1+\be_2=\al,\\|\be_1|\geq 1}}\bar Z^{\be_1}(\mu^{-1}L\mu-2\rho^{-1})\ds \bar Z^{\be_2}\trgs\chi+\sum_{\be_1+\be_2=\al-1}\Lies_{\bar Z}^{\be_1}\Lies_{[L,\bar Z]}E_{\chi}^{\be_2},\\
   e_{\chi}^0&=\ds(\mu^{-1}L\mu)\trgs\chi+\ds e_{\chi}.
  \end{split}
 \end{equation}
For any 1-form $\xi$, one has
 \begin{equation}\label{weighted transport equation of 1-form}
  \begin{split}
   L(\rho^2|\xi|^2)=-2\rho^2\chic^{AB}\xi_A\xi_B+2\rho^2\gs^{AB}(\Lies_L\xi_A)\xi_B.
  \end{split}
 \end{equation}
By taking $\xi=\rho^2E_{\chi}^{\al}$ in \eqref{weighted transport equation of 1-form} and
using \eqref{higher order modified transport equation of E chi}, then
 \begin{equation*}
  \begin{split}
   L(\rho^6|E_{\chi}^{\al}|^2)
   =&2\rho^6\big[(\mu^{-1}L\mu)E_{\chi}^{\al}+(\mu^{-1}L\mu-2\rho^{-1})\ds\bar Z^{\al}E_{\chi}\\
   &-\ds\bar Z^{\al}(|\chic|^2)+e_{\chi}^{\al}\big]\cdot E_{\chi}^{\al}-2\rho^{6}\check\chi^{AB}E_{\chi A}^{\al}E_{\chi B}^{\al}.
  \end{split}
 \end{equation*}
Hence,
 \begin{equation}\label{transport inequality of E chi}
  L(\rho^3|E_{\chi}^{\al}|)\lesssim\rho^3\big[M^p\delta^{(1-\ve0)p-1}\rho^{-2}|E_{\chi}^{\al}|+\rho^{-1}|\ds\bar Z^{\al}E_{\chi}|+|\ds\bar Z^{\al}(|\chic|^2)|+|e_{\chi}^{\al}|\big].
 \end{equation}
Using Lemma \ref{Lemma 2 in higher order L^2 estimates} and \eqref{transport inequality of E chi} for $\rho^3|E_{\chi}^{\al}|$ to get
 \begin{equation}\label{preliminary L^2 estimate of E chi}
  \begin{split}
   \de^l\|\rho^2E_{\chi}^{\al}\|_{L^2(\Si_{\mathfrak t}^u)}
   &\lesssim M^p\de^{(1-\ve0)p-\frac{1}{2}}+\de^l\int_{t_0}^\mathfrak t\big\{M^p\de^{(1-\ve0)p-1}\tau^{-2}\|\rho^2E_{\chi}^{\al}\|_{L^2(\Si_{\tau}^u)}\\
   &\quad+\tau\|\ds\bar Z^{\al}E_{\chi}\|_{L^2(\Si_{\tau}^u)}+\tau^2\|\ds\bar Z^{\al}(|\chic|^2)\|_{L^2(\Si_{\tau}^u)}+\tau^2\|e_{\chi}^{\al}\|_{L^2(\Si_{\tau}^u)}\big\}\d\tau.
  \end{split}
 \end{equation}

Next, we estimate the terms in integrand of \eqref{preliminary L^2 estimate of E chi} one by one.

\vskip 0.2 true cm

\textbf{(1-a) Estimate of $\ds\bar Z^{\al}E_{\chi}$}

\vskip 0.1 true cm

Due to $E_{\chi}=\frac{1}{2}pc^4\mu^{-1}\vp_0^{p-1}\Lb\vp_0$, then
it follows from \eqref{BA}, Proposition \ref{Proposition higher order L^2 estimates}
and Lemma \ref{Lemma 1 in higher order L^2 estimates} that
 \begin{equation}\label{top L^2 estimate for E chi}
  \begin{split}
   &\quad\de^l\|\ds\bar Z^{\al}E_{\chi}\|_{L^2(\Si_{\tau}^u)}\\
   \lesssim&(M\de^{1-\ve0}\tau^{-1})^{p-1}\Big\{\tau^{-1}\de^l\|T Z^{\leq\al+1}\vp\|_{L^2(\Si_{\tau}^u)}+\tau^{-1}\de^l\|\ds Z^{\leq\al+1}\vp\|_{L^2(\Si_{\tau}^u)}\\
   &+\delta^{-1}\tau^{-1}\delta^l\|Z^{\leq\al+1}\vp\|_{L^2(\Si_{\tau}^u)}+M\de^{-\ve0}\tau^{-2}\de^l\|Z^{\leq\al+1}\mu\|_{L^2(\Si_{\tau}^u)}\\
   &+M\delta^{1-\ve0}\tau^{-3}\delta^l\|\Lies_Z^{\leq\al}{}^{(R_i)}\pis_T\|_{L^2(\Si_{\tau}^u)}\Big\}\\
   \lesssim& M^{2p}\de^{(1-\ve0)2p-\frac{3}{2}}\tau^{-p}+M^{p-1}\de^{(1-\ve0)(p-1)}\tau^{-p}\big\{\sqrt{\Et_{1,\leq|\al|+2}(\tau,u)}+\sqrt{\Et_{2,\leq|\al|+2}(\tau,u)}\big\}.
  \end{split}
 \end{equation}

 \vskip 0.2 true cm

\textbf{(1-b) Estimate of $\ds\bar Z^{\al}(|\chic|^2)$}

\vskip 0.1 true cm

According to \eqref{elliptic equation of chi}, we have
 \begin{equation}\label{elliptic equation of chi small}
  (\divgs\chic)_A=\ds_A\trgs\chi+I_A,
 \end{equation}
where
 \begin{equation}\label{I}
  I_A=c^{-1}(\ds^Bc)\chi_{AB}-c^{-1}(\ds_Ac)\trgs\chi.
 \end{equation}
By commuting $\Lies_{\bar Z}^{\al}$ with $\divgs$, one has from \eqref{commutator [nas,Lies]} that
 \begin{equation*}
  \begin{split}
   &\Lies_{\bar Z}^{\al}(\divgs\chic)_A=\Lies_{\bar Z}^{\al}(\gs^{BC}\nas_C\chic_{AB})\\
   =&(\divgs\Lies_{\bar Z}^{\al}\chic)_A+\sum_{\be_1+\be_2=\al-1}\Lies_{\bar Z}^{\be_1}\big[\gs^{BC}(\check{\nas}_C{}^{(\bar Z)}\pis_A^D)\Lies_Z^{\be_2}\chic_{BD}+\gs^{BC}(\check{\nas}_C{}^{(\bar Z)}\pis_B^D)\Lies_{\bar Z}^{\be_2}\chic_{AD}+{}^{(\bar Z)}\pis^{BC}\nas_C\Lies_{\bar Z}^{\be_2}\chic_{AB}\big].
  \end{split}
 \end{equation*}
Then \eqref{elliptic equation of chi small} implies
 \begin{equation}\label{higher order elliptic equation of chi small}
  (\divgs\Lies_{\bar Z}^{\al}\chic)_A=\ds_A\bar Z^{\al}\trgs\chi+{I^{\al}}_A,
 \end{equation}
where
 \begin{equation}\label{higher order I}
  {I^{\al}}_A=\Lies_{\bar Z}^{\al}I_A-\sum_{\be_1+\be_2=\al-1}\Lies_{\bar Z}^{\be_1}\big[\gs^{BC}(\check{\nas}_C{}^{(\bar Z)}\pis_A^D)\Lies_{\bar Z}^{\be_2}\chic_{BD}+\gs^{BC}(\check{\nas}_C{}^{(\bar Z)}\pis_B^D)\Lies_{\bar Z}^{\be_2}\chic_{AD}+{}^{(\bar Z)}\pis^{BC}\nas_C\Lies_{\bar Z}^{\be_2}\chic_{AB}\big]
 \end{equation}
with
 \begin{equation}\label{L^2 estimate of higher order I}
  \de^l\|I^{\al}\|_{L^2(\Si_{\tau}^u)}\lesssim\delta \tau^{-2}\de^l\|\Lies_{\bar Z}^{\leq\al}\chic\|_{L^2(\Si_{\tau}^u)}+M^p\de^{(1-\ve0)p}\tau^{-3}\big\{\de^l\|\Lies_{\bar Z}^{\leq\al}{}^{(R_j)}\pis\|_{L^2(\Si_{\tau}^u)}+\delta\delta^l\|\Lies_{\bar Z}^{\leq\al}{}^{(T)}\pis\|_{L^2(\Si_{\tau}^u)}\big\}.
 \end{equation}

It follows from \eqref{estimate of Gaussian curvature}, \eqref{elliptic estimate 1} and \eqref{higher order elliptic equation of chi small} that
 \begin{equation}\label{L^2 estimate of chih}
  \begin{split}
   \de^l\|\nas\Lies_{\bar Z}^{\al}\check\chi\|_{L^2(\Si_{\tau}^u)}&\lesssim\de^l\|\divgs\Lies_{\bar Z}^{\al}\check\chi\|_{L^2(\Si_{\tau}^u)}\lesssim\de^l\|\ds_A{\bar Z}^{\al}\trgs\chi\|_{L^2(\Si_{\tau}^u)}+\de^l\|I^{\al}\|_{L^2(\Si_{\tau}^u)},
  \end{split}
 \end{equation}
and then by \eqref{L^2 estimate of chih} and \eqref{L^2 estimate of higher order I},
 \begin{equation*}
  \begin{split}
   &\de^l\|\ds\bar Z^{\al}(|\chic|^2)\|_{L^2(\Si_{\tau}^u)}\\
   \lesssim& M^p\de^{(1-\ve0)p}\tau^{-2}\de^l\|\nas\Lies_{\bar Z}^{\al}\chic\|_{L^2(\Si_{\tau}^u)}+M^p\delta^{(1-\ve0)p}\tau^{-3}\delta^l\|\Lies_{\bar Z}^{\leq\al}\check\chi\|_{L^2(\Si_{\tau}^u)}\\
   &+M^{2p}\de^{(1-\ve0)2p}\tau^{-5}\big\{\de^l\|\Lies_{\bar Z}^{\leq\al}{}^{(R_j)}\pis\|_{L^2(\Si_{\tau}^u)}
   +\delta^{l+1}\|\Lies_{\bar Z}^{\leq\al}{}^{(T)}\pis\|_{L^2(\Si_{\tau}^u)}\big\}\\
   \lesssim &M^p\de^{(1-\ve0)p}\tau^{-2}\big[\de^l\|\ds\bar Z^{\al}\trgs\chi\|_{L^2(\Si_{\tau}^u)}+\de^l\|I^{\al}\|_{L^2(\Si_{\tau}^u)}+\tau^{-1}\delta^l\|\Lies_{\bar Z}^{\leq\al}\check\chi\|_{L^2(\Si_{\tau}^u)}\big]\\
   &+M^{2p}\de^{(1-\ve0)2p}\tau^{-5}\big\{\de^l\|\Lies_{\bar Z}^{\leq\al}{}^{(R_j)}\pis\|_{L^2(\Si_{\tau}^u)}
   +\delta^{l+1}\|\Lies_{\bar Z}^{\leq\al}{}^{(T)}\pis\|_{L^2(\Si_{\tau}^u)}\big\}\\
   \lesssim& M^p\de^{(1-\ve0)p}\tau^{-2}\big\{\de^l\|E_{\chi}^{\al}\|_{L^2(\Si_{\tau}^u)}+\de^l\|\ds\bar Z^{\al}E_{\chi}\|_{L^2(\Si_{\tau}^u)}+\tau^{-1}\de^l\|\Lies_{\bar Z}^{\leq\al}\chic\|_{L^2(\Si_{\tau}^u)}\big\}\\
   &+M^{2p}\de^{(1-\ve0)2p}\tau^{-5}\big\{\de^l\|\Lies_{\bar Z}^{\leq\al}{}^{(R_j)}\pis\|_{L^2(\Si_{\tau}^u)}+\delta^{l+1}\|\Lies_{\bar Z}^{\leq\al}{}^{(T)}\pis\|_{L^2(\Si_{\tau}^u)}\big\}.
  \end{split}
 \end{equation*}
 Therefore, using Proposition \ref{Proposition higher order L^2 estimates} and \eqref{top L^2 estimate for E chi} to get
 \begin{equation}\label{top L^2 estimate for chi small}
  \begin{split}
   &\de^l\|\ds\bar Z^{\al}(|\chic|^2)\|_{L^2(\Si_{\tau}^u)}\\
   \lesssim& M^p\de^{(1-\ve0)p}\tau^{-2}\de^l\|E_{\chi}^{\al}\|_{L^2(\Si_{\tau}^u)}+M^{2p}\de^{(1-\ve0)p+\frac{1}{2}}\tau^{-4}\\
   &+M^{2p-1}\de^{(1-\ve0)(2p-1)}\tau^{-4}\big\{\sqrt{\Et_{1,\leq|\al|+2}(\tau,u)}+\sqrt{\Et_{2,\leq|\al|+2}(\tau,u)}\big\}.
  \end{split}
 \end{equation}

\vskip 0.2 true cm

\textbf{(1-c) Estimate of $e_{\chi}^{\al}$}

\vskip 0.1 true cm

By \eqref{E chi and e chi}, \eqref{H'0}, \eqref{F la}, \eqref{transport equation of chi} and \eqref{R0}, one has
 \begin{equation}\label{L^2 estimate of e chi}
  \begin{split}
   &\de^l\|\Lies_{\bar Z}^{\al}\ds e_{\chi}\|_{L^2(\Si_{\tau}^u)}\\
   \lesssim&M^p\de^{(1-\ve0)p-1}\tau^{-(p+2)}\de^l\|Z^{\leq\al+1}\mu\|_{L^2(\Si_{\tau}^u)}\\
   &+\delta^{-1}\tau^{-1}(M\de^{1-\ve0}\tau^{-1})^{p-1}(\de^l\|\ds Z^{\leq\al+1}\vp\|_{L^2(\Si_{\tau}^u)}+\tau^{-1}\de^l\|Z^{\leq\al+1}\vp\|_{L^2(\Si_{\tau}^u)})\\
   \lesssim& M^p\de^{(1-\ve0)p-1}\tau^{-(p+2)}\de^l\|Z^{\leq\al+1}\mu\|_{L^2(\Si_{\tau}^u)}\\
   &+M^{p-1}\de^{(1-\ve0)(p-1)-1}\tau^{-p}\big\{\tau^{-s}\sqrt{E_{1,\leq|\al|+2}(\tau,u)}+\delta\tau^{-1}\sqrt{E_{2,\leq|\al|+2}(\tau,u)}\big\}.
  \end{split}
 \end{equation}

For the third term in \eqref{higher order e chi},
it follows from ${}^{(T)}\pis_L=-c^2\ds(c^{-2}\mu)$, \eqref{higher order modified transport equation of E chi} and the induction argument that
 \begin{equation}\label{L^2 estimate of the third term in higher order e chi}
  \begin{split}
   &\de^l\|\sum_{\be_1+\be_2=\al-1}\Lies_{\bar Z}^{\be_1}\Lies_{[L,\bar Z]}E_{\chi}^{\be_2}\|_{L^2(\Si_{\tau}^u)}\\
   \lesssim&\de^l\|\sum_{\be_1+\be_2=\al-1}\Lies_{\bar Z}^{\be_1}({}^{(\bar Z)}\pis_L\nas E_{\chi}^{\be_2}+\nas{}^{(\bar Z)}\pis_LE_{\chi}^{\be_2})\|_{L^2(\Si_{\tau}^u)}\\
   \lesssim& M^p\de^{(1-\ve0)p}\tau^{-2}\big\{\de^l\|E_{\chi}^{\al}\|_{L^2(\Si_{\tau}^u)}+\tau^{-1}\de^l\|\bar Z^{\leq\al}\trgs\check\chi\|_{L^2(\Si_{\tau}^u)}+\tau^{-1}\de^l\|\ds\bar Z^{\leq\al-1}E_\chi\|_{L^2(\Si_{\tau}^u)}\big\}\\
   &+M^p\de^{(1-\ve0)p}\tau^{-5}\big\{\de^l\|{\bar Z}^{\leq\al+1}\mu\|_{L^2(\Si_{\tau}^u)}+\delta^l\|\bar Z^{\leq\al+1}c\|_{L^2(\Si_{\tau}^u)}\big\}\\
   &+M^p\de^{(1-\ve0)p-1}\tau^{-4}\de^l\|\Lies_{\bar Z}^{\leq\al}{}^{(R_j)}\pis_L\|_{L^2(\Si_{\tau}^u)}.
  \end{split}
 \end{equation}
We point out that the estimates of the other terms in \eqref{higher order e chi} are easily to be
handled. Then after substituting \eqref{L^2 estimate of e chi} and \eqref{L^2 estimate of the third term in higher order e chi}
into \eqref{higher order e chi}, one has
 \begin{equation*}
  \begin{split}
   &\de^l\|e_{\chi}^{\al}\|_{L^2(\Si_{\tau}^u)}\\
   \lesssim& M^p\de^{(1-\ve0)p}\tau^{-2}\de^l\|E_{\chi}^{\al}\|_{L^2(\Si_{\tau}^u)}+\tau^{-3}\de^l\|\Lies_Z^{\leq\al}\chic\|_{L^2(\Si_{\tau}^u)}+M^{p-1}\de^{(1-\ve0)(p-1)-1}\tau^{-p}\delta^l\|LZ^{\al+1}\vp\|_{L^2(\Si_{\tau}^u)}\\
   &+M^{p-1}\de^{(1-\ve0)(p-1)}\tau^{-(p+1)}\big\{\tau^{-s}\sqrt{E_{1,\leq|\al|+2}(\tau,u)}+\sqrt{E_{2,\leq|\al|+2}(\tau,u)}\big\}\\
   &+M^p\delta^{(1-\ve0)p}\tau^{-5}\big\{\delta^l\|\Lies_Z^{\leq\al}{}^{(R_i)}\pis\|_{L^2(\Si_{\tau}^u)}
   +\delta^{l+1}\|\Lies_Z^{\leq\al}{}^{(T)}\pis\|_{L^2(\Si_{\tau}^u)}+\tau^2\de^l\|\ds\bar Z^{\leq\al-1}E_\chi\|_{L^2(\Si_{\tau}^u)}\big\}\\
   &+M^p\de^{(1-\ve0)p-1}\tau^{-4}\big\{\de^l\|Z^{\leq\al+1}\mu\|_{L^2(\Si_{\tau}^u)}
   +\de^l\|\Lies_Z^{\leq\al}{}^{(R_j)}\pis_L\|_{L^2(\Si_{\tau}^u)}\big\}.
  \end{split}
 \end{equation*}
Therefore, by Proposition \ref{Proposition higher order L^2 estimates} and \eqref{top L^2 estimate for E chi}, we arrive at
 \begin{equation}\label{top L^2 estimate for higher order e chi}
  \begin{split}
   &\int_{t_0}^\mathfrak t\tau^2\de^l\|e_{\chi}^{\al}\|_{L^2(\Si_{\tau}^u)}d\tau\\
   \lesssim& \int_{t_0}^\mathfrak tM^p\de^{(1-\ve0)p}\de^l\|E_{\chi}^{\al}\|_{L^2(\Si_{\tau}^u)}d\tau+M^{p-1}\delta^{(1-\ve0)(p-1)-1}\mathfrak t^{\f12-s}\sqrt{\int_0^uF_{1,|\al|+2}(\mathfrak t,u')du'}\\
   &+M^p\de^{(1-\ve0)p-\frac{1}{2}}\ln\mathfrak t+M^{p-1}\de^{(1-\ve0)(p-1)}\ln\mathfrak t\big\{\sqrt{\Et_{1,\leq|\al|+2}(\mathfrak t,u)}+\sqrt{\Et_{2,\leq|\al|+2}(\mathfrak t,u)}\big\}.
  \end{split}
 \end{equation}

Substituting \eqref{top L^2 estimate for E chi}, \eqref{top L^2 estimate for chi small}
and \eqref{top L^2 estimate for higher order e chi} into \eqref{preliminary L^2 estimate of E chi},
and applying Gronwall's inequality, one obtains
 \begin{equation*}
  \begin{split}
   \de^l\|\rho^2E_{\chi}^{\al}\|_{L^2(\Si_\mathfrak t^u)}
   \lesssim& M^p\de^{(1-\ve0)p-\frac{1}{2}}\ln \mathfrak t+M^{p-1}\delta^{(1-\ve0)(p-1)-1}\mathfrak t^{\f12-s}\sqrt{\int_0^u\tilde F_{1,\leq|\al|+2}(\mathfrak t,u')du'}\\
   &+M^{p-1}\de^{(1-\ve0)(p-1)}\ln\mathfrak t\big\{\sqrt{\Et_{1,\leq|\al|+2}(\mathfrak t,u)}+\sqrt{\Et_{2,\leq|\al|+2}(\mathfrak t,u)}\big\}.
  \end{split}
 \end{equation*}
Due to $\ds\bar Z^{\al}\trgs\chi=E_{\chi}^{\al}+\ds\bar Z^{\al}E_{\chi}$, then  by \eqref{top L^2 estimate for E chi} and \eqref{L^2 estimate of chih},
 \begin{equation}\label{top order L^2 estimate of bchi}
  \begin{split}
   &\de^l\|\ds\bar Z^{\al}\trgs\chi\|_{L^2(\Si_\mathfrak t^u)}+\de^l\|\slashed\nabla\Lies_{\bar Z}^{\al}\chic\|_{L^2(\Si_\mathfrak t^u)}\\
   \lesssim& M^p\de^{(1-\ve0)p-\frac{1}{2}}\mathfrak t^{-2}\ln\mathfrak t+M^{p-1}\delta^{(1-\ve0)(p-1)-1}\mathfrak t^{-\f32-s}\sqrt{\int_0^u\tilde F_{1,\leq|\al|+2}(\mathfrak t,u')du'}\\
   &+M^{p-1}\de^{(1-\ve0)(p-1)}\mathfrak t^{-2}\ln\mathfrak t\big\{\sqrt{\Et_{1,\leq|\al|+2}(\mathfrak t,u)}+\sqrt{\Et_{2,\leq|\al|+2}(\mathfrak t,u)}\big\}.
  \end{split}
 \end{equation}

If there exists at least one $\rho L$ in  $\bar Z^{\al}$, then
 by making use of \eqref{transport equation of chi small}, the commutators of vector fields and Proposition \ref{Proposition higher order L^2 estimates}, we have
\begin{equation}\label{top order L^2 estimate of Lchi}
\begin{split}
\de^l\|\slashed\nabla\Lies_{Z}^{\al}\chic\|_{L^2(\Si_\mathfrak t^u)}\lesssim& M^p\de^{(1-\ve0)p+\frac{3}{2}}\mathfrak t^{-3}\\
&+M^{p-1}\de^{(1-\ve0)(p-1)}\big\{\mathfrak t^{-2-s}\sqrt{\Et_{1,\leq|\al|+2}(\mathfrak t,u)}+\delta\mathfrak t^{-3}\sqrt{\Et_{2,\leq|\al|+2}(\mathfrak t,u)}\big\}.
\end{split}
\end{equation}

Therefore, for any vector filed $Z\in\{\rho L, T, R_1, R_2, R_3\}$, \eqref{top order L^2 estimate of bchi}
and \eqref{top order L^2 estimate of Lchi} give that
\begin{equation}\label{top order L^2 estimate of chi}
\begin{split}
&\de^l\|\slashed\nabla\Lies_{Z}^{\al}\chic\|_{L^2(\Si_\mathfrak t^u)}+\de^l\|\ds Z^{\al}\trgs\chi\|_{L^2(\Si_\mathfrak t^u)}\\
\lesssim& M^p\de^{(1-\ve0)p-\frac{1}{2}}\mathfrak t^{-2}\ln\mathfrak t+M^{p-1}\delta^{(1-\ve0)(p-1)-1}\mathfrak t^{-\f32-s}\sqrt{\int_0^u\tilde F_{1,\leq|\al|+2}(\mathfrak t,u')du'}\\
&+M^{p-1}\de^{(1-\ve0)(p-1)}\mathfrak t^{-2}\ln\mathfrak t\big\{\sqrt{\Et_{1,\leq|\al|+2}(\mathfrak t,u)}+\sqrt{\Et_{2,\leq|\al|+2}(\mathfrak t,u)}\big\}.
\end{split}
\end{equation}

\subsection{Estimates on the derivatives of $\slashed\nabla^2\mu$}\label{Section 6.2}

Similarly to $\trgs\chi$, we use the transport equation \eqref{transport equation of mu} to estimate $\Des\mu$.
By Lemma \ref{Lemma commutator [Des,Z]}, one has
 \begin{equation}\label{transport equation of Des mu}
  \begin{split}
   L\Des\mu
   =&\Des L\mu+[L,\Des]\mu\\
   =&-\boxed{cT\Des c}-2c{\big(\ds_A(c^{-2}\mu)\check\chi^{AB}-\f12\ds^B(c^{-2}\mu)\trgs\check\chi\big)}\ds_Bc-2c^{-1}\mu\chic^{AB}\nas_{AB}^2c\\
   &-2\rho^{-1}c^{-1}\mu\Des c-2\ds c\cdot\ds Tc-(\Des c)Tc+\underline{L(\mu\Des\ln c)}-(L\mu)\Des\ln c\\
   &+2\mu\chic^{AB}\nas_{AB}^2\ln c+2\rho^{-1}\mu\Des\ln c+2\ds(c^{-1}Lc)\cdot\ds\mu+c^{-1}Lc\uwave{\Des\mu}\\
   &-(\ds_A\trgs\check\chi)\ds^A\mu-2\chic^{AB}\uwave{\nas_{AB}^2\mu}-2\rho^{-1}\uwave{\Des\mu}-2I_A\ds^A\mu,
  \end{split}
 \end{equation}
where $I_A$ has been given in \eqref{I}. Observe that the term with underline can be removed to the left hand side
of \eqref{transport equation of Des mu}, while the terms with wavy line will be treated by the elliptic estimate
and Gronwall's inequality. The strategy to treat the boxed term which contains the third order derivative of the solution is to
transfer it into such a form $L(\p^{\leq2}\vp)+\lot$. Indeed, by \eqref{transport equation of chi along T}, we have
 \begin{equation}\label{transport equation of trgs chi along T}
   T\trgs\chi=T(\gs^{AB}\chi_{AB})=\Des\mu+J
 \end{equation}
with
 \begin{equation}\label{J}
  J=c^{-2}\mu|\chi|^2-c^{-1}L(c^{-1}\mu)\trgs\chi-\nas^A(c^{-1}\mu\ds_Ac)+c^{-2}\mu|\ds c|^2-c^{-1}(\ds_A c)\ds^A\mu.
 \end{equation}
In addition, \eqref{Des vp0} gives that
 \begin{equation}\label{Des c}
  \begin{split}
   \Des c=&\nas^A(-\frac{1}{2}pc^3\vp_0^{p-1}\ds_A\vp_0)\\
         =&-\frac{1}{2}pc^3\mu^{-1}\vp_0^{p-1}(L\Lb\vp_0)-\frac{1}{2}pc^3\mu^{-1}\vp_0^{p-1}(T\vp_0)\trgs\chi\\
         &+\frac{1}{2}pc^3\mu^{-1}\vp_0^{p-1}(H'_0-F_0)+\ds^A(-\frac{1}{2}pc^3\vp_0^{p-1})\ds_A\vp_0.
  \end{split}
 \end{equation}
Combining \eqref{Des c} and \eqref{transport equation of trgs chi along T} yields
 \begin{equation}\label{computation 3}
  \begin{split}
   &-cT\Des c\\
   =&\underline{L(\frac{1}{2}pc^4\mu^{-1}\vp_0^{p-1}T\Lb\vp_0)}-c\mu^{-1}(Tc)\uwave{\Des\mu}-L(\frac{1}{2}pc^4\mu^{-1}\vp_0^{p-1})T\Lb\vp_0\\
   &-\frac{1}{2}pc^4\mu^{-1}\vp_0^{p-1}{}^{(T)}\pis_L\cdot\ds\Lb\vp_0+\frac{1}{2}pc^4\mu^{-1}\vp_0^{p-1}(T\vp_0)J\\
   &+cT(\frac{1}{2}pc^3\mu^{-1}\vp_0^{p-1})L\Lb\vp_0+cT(\frac{1}{2}pc^3\mu^{-1}\vp_0^{p-1}T\vp_0)\trgs\chi\\
   &-cT\big[\frac{1}{2}pc^3\mu^{-1}\vp_0^{p-1}(H'_0-F_0)+\ds(-\frac{1}{2}pc^3\vp_0^{p-1})\cdot\ds\vp_0\big].
  \end{split}
 \end{equation}

Substituting \eqref{computation 3} into \eqref{transport equation of Des mu}, by direct computation
then one has
 \begin{equation}\label{modified transport equation of Des mu}
  L(\Des\mu-E_{\mu})=(\mu^{-1}L\mu-2\rho^{-1})\Des\mu-2\chic^{AB}\nas_{AB}^2\mu-(\ds_A\trgs\chi)\ds^A\mu+e_{\mu},
 \end{equation}
where
 \begin{equation}\label{E mu and e mu}
  \begin{split}
   E_{\mu}=&\frac{1}{2}pc^4\mu^{-1}\vp_0^{p-1}T\Lb\vp_0+\mu\Des\ln c,\\
   e_{\mu}=&-L(\frac{1}{2}pc^4\mu^{-1}\vp_0^{p-1})T\Lb\vp_0-\frac{1}{2}pc^4\mu^{-1}\vp_0^{p-1}{}^{(T)}\pis_L\cdot\ds\Lb\vp_0+\frac{1}{2}pc^4\mu^{-1}\vp_0^{p-1}(T\vp_0) J\\
          &+cT(\frac{1}{2}pc^3\mu^{-1}\vp_0^{p-1})L\Lb\vp_0+cT(\frac{1}{2}pc^3\mu^{-1}\vp_0^{p-1}T\vp_0)\trgs\chi+2\ds(c^{-1}Lc)\cdot\ds\mu\\
          &-cT\big[\frac{1}{2}pc^3\mu^{-1}\vp_0^{p-1}(H'_0-F_0)+\ds(-\frac{1}{2}pc^3\vp_0^{p-1})\cdot\ds\vp_0\big]+2\rho^{-1}\mu\Des\ln c\\
          &-2c\ds_Bc\big[\ds_A(c^{-2}\mu)\check\chi^{AB}-\frac{1}{2}\ds^B(c^{-2}\mu)\trgs\check\chi\big]-(L\mu)\Des\ln c+2\mu\chic^{AB}\nas_{AB}^2\ln c\\
          &-2c^{-1}\mu\chic^{AB}\nas_{AB}^2c-2\rho^{-1}c^{-1}\mu\Des c-2\ds c\cdot\ds Tc-\Des c\cdot Tc-2\ds\mu\cdot I.
  \end{split}
 \end{equation}
It is easy to see that $e_{\mu}$ is composed of the terms which contain the factors $\p^{\leq 2}\vp$, $\p^{\leq 1}\mu$ and $\check\chi$.

Let $E_{\mu}^{\al}=\bar Z^{\al}(\Des\mu-E_{\mu})$ with $Z\in\{T,R_1,R_2,R_3\}$. Then by the
analogous induction argument on \eqref{modified transport equation of Des mu}, we have
 \begin{equation}\label{higher order modified transport equation of E mu}
  \begin{split}
   LE_{\mu}^{\al}=(\mu^{-1}L\mu-2\rho^{-1})E_{\mu}^{\al}+[L,\bar Z]E_{\mu}^{\al-1}-2\chic^{AB}\Lies_{\bar Z}^{\al}\nas_{AB}^2\mu-\ds\mu\cdot\ds {\bar Z}^{\al}\trgs\chi+e_{\mu}^{\al},
  \end{split}
 \end{equation}
where
 \begin{equation}\label{higher order e mu}
  \begin{split}
   e_{\mu}^{\al}
   =&\bar Z^{\al}e_{\mu}+(\mu^{-1}L\mu-2\rho^{-1})\bar Z^{\al}E_{\mu}\\
   &+\sum_{\substack{\be_1+\be_2=\al,\\|\beta_1|\geq1}}\bar Z^{\be_1}(\mu^{-1}L\mu-2\rho^{-1})\bar Z^{\be_2}\Des\mu+\sum_{\substack{\be_1+\be_2=\al,\\|\beta_1|\geq1}}\Lies_{\bar Z}^{\be_1}\chic\cdot\Lies_{\bar Z}^{\be_2}\nas^2\mu\\
   &+\sum_{\substack{\be_1+\be_2=\al,\\|\beta_1|\geq1}}(\Lies_{\bar Z}^{\beta_1}\ds^A\mu)\ds_A\bar Z^{\be_2}\trgs\chi+\sum_{\substack{\be_1+\be_2=\al-1,\\|\beta_1|\geq1}}{\bar Z}^{\be_1}[L,\bar Z]E_{\mu}^{\be_2}.
  \end{split}
 \end{equation}
Since
 \begin{equation*}
  \begin{split}
   L(\rho^2E_{\mu}^{\al})=\rho^2\big\{\mu^{-1}L\mu\cdot E_{\mu}^{\al}+[L,\bar Z]E_{\mu}^{\al-1}-2\chic^{AB}\Lies_{\bar Z}^{\al}\nas_{AB}^2\mu-\ds\mu\cdot\ds\bar Z^{\al}\trgs\chi+e_{\mu}^{\al}\big\},
  \end{split}
 \end{equation*}
it follows from Lemma \ref{Lemma 2 in higher order L^2 estimates} for $\rho^2E_\mu^\al$ and Proposition \ref{Proposition higher order L^infty estimates} that
 \begin{equation}\label{preliminary L^2 estimate of E mu}
  \begin{split}
   &\de^l\|\rho E_{\mu}^{\al}\|_{L^2(\Si_{\mathfrak t}^u)}\\
   \lesssim &M^p\de^{(1-\ve0)p-\frac{3}{2}}+\de^l\int_{t_0}^\mathfrak t\big[M^p\de^{(1-\ve0)p-1}\tau^{-2}\|\rho E_{\mu}^{\al}\|_{L^2(\Si_{\tau}^u)}\\
   &+M^p\de^{(1-\ve0)p}\tau^{-1}\|\Lies_{\bar Z}^{\al}\nas^2\mu\|_{L^2(\Si_{\tau}^u)}+M^p\de^{(1-\ve0)p-1}\|\ds\bar Z^{\al}\trgs\chi\|_{L^2(\Si_{\tau}^u)}+\tau\|e_{\mu}^{\al}\|_{L^2(\Si_{\tau}^u)}\big]\d\tau.
  \end{split}
 \end{equation}

Next, we estimate the terms in the integrand of \eqref{preliminary L^2 estimate of E mu} one by one (notice that the $L^2$ estimate of $\ds \bar Z^{\al}\trgs\chi$ has been obtained in \eqref{top order L^2 estimate of chi}).

\textbf{(2-a) Estimate of $\Lies_{\bar Z}^{\al}\nas^2\mu$}

\vskip 0.1 true cm

With the help of Lemma \ref{Lemma commutator [Des,Z]}, we have from \eqref{estimate of Gaussian curvature} and the elliptic estimate \eqref{elliptic estimate 2} that
 \begin{equation}\label{top L^2 estimate for nas^2 mu}
  \begin{split}
   &\de^l\|\Lies_{\bar Z}^{\al}\nas^2\mu\|_{L^2(\Si_{\tau}^u)}\\
   \lesssim&\de^l\|\bar Z^{\al}\Des\mu\|_{L^2(\Si_{\tau}^u)}+\de^l\|[\Des,\bar Z^{\al}]\mu\|_{L^2(\Si_{\tau}^u)}+\de^l\|[\Lies_{\bar Z}^{\al},\nas^2]\mu\|_{L^2(\Si_{\tau}^u)}\\
   \lesssim&\de^l\|E_{\mu}^{\al}\|_{L^2(\Si_{\tau}^u)}+\de^l\|\bar Z^{\al}E_{\mu}\|_{L^2(\Si_{\tau}^u)}+\de^l\|[\Des,\bar Z^{\al}]\mu\|_{L^2(\Si_{\tau}^u)}+\de^l\|[\Lies_{\bar Z}^{\al},\nas^2]\mu\|_{L^2(\Si_{\tau}^u)}\\
   \lesssim&\de^l\| E_{\mu}^{\al}\|_{L^2(\Si_{\tau}^u)}+(M\delta^{1-\ve0}\tau^{-1})^{p-1}\big\{\tau^{-1-s}\sqrt{E_{1,\leq|\al|+2}(\tau,u)}
   +\delta^{-1}\sqrt{E_{2,\leq|\al|+2}(\tau,u)}\big\}\\
   &+(M\delta^{1-\ve0}\tau^{-1})^{p-1}\big\{ M\delta^{-\ve0}\tau^{-2}\|\Lies_{R_i}^{\leq\al}{}^{(R_j)}\pis_T\|_{L^2(\Si_{\tau}^u)}
   +\delta^{-2}\delta^l\|Z^{\leq\al+1}\vp\|_{L^2(\Si_{\tau}^u)}\big\}\\
   &+M^p\delta^{(1-\ve0)p-1}\tau^{-2}\big\{\delta^l\|\Lies_Z^{\leq\al}{}^{(R_i)}\pis\|_{L^2(\Si_{\tau}^u)}
   +\delta^{l+1}\|\Lies_Z^{\leq\al}{}^{(T)}\pis\|_{L^2(\Si_{\tau}^u)}\big\}+\delta\tau^{-3}\delta^l\|Z^{\leq\al+1}\mu\|_{L^2(\Si_{\tau}^u)}\\
   \lesssim&\de^l\|E_{\mu}^{\al}\|_{L^2(\Si_{\tau}^u)}+M^{2p}\de^{(1-\ve0)p+\frac{1}{2}}\tau^{-2}\\
   &+M^{p-1}\de^{(1-\ve0)(p-1)}\tau^{-2}\sqrt{\Et_{1,\leq|\al|+2}(\tau,u)}
   +M^{p-1}\de^{(1-\ve0)(p-1)-1}\tau^{-2}\sqrt{\Et_{2,\leq|\al|+2}(\tau,u)}.
  \end{split}
 \end{equation}

\textbf{(2-b) Estimate for $e_{\mu}^{\al}$}

\vskip 0.1 true cm

In view of \eqref{higher order e mu} and \eqref{E mu and e mu}, one knows that all terms in \eqref{higher order e mu} can be estimated with the help of Propositions \ref{Proposition higher order L^2 estimates}, \ref{Proposition higher order L^infty estimates} and \eqref{BA}, that is,
 \begin{equation}\label{top L^2 estimate for higher order e mu}
  \begin{split}
   \de^l\|e_{\mu}^{\al}\|_{L^2(\Si_{\tau}^u)}\lesssim& M^{2p}\de^{(1-\ve0)2p-\frac{5}{2}}\tau^{-2}\\
   &+M^{p-1}\de^{(1-\ve0)(p-1)-1}\tau^{-2}\big\{\tau^{-s}\sqrt{\Et_{1,\leq|\al|+2}(\tau,u)}+\sqrt{\Et_{2,\leq|\al|+2}(\tau,u)}\big\}.
  \end{split}
 \end{equation}

Substituting \eqref{top L^2 estimate for nas^2 mu}, \eqref{top L^2 estimate for higher order e mu}
and \eqref{top order L^2 estimate of chi} into \eqref{preliminary L^2 estimate of E mu}, and
then applying Gronwall's inequality to obtain
 \begin{equation*}
  \begin{split}
   \de^l\|\rho E_{\mu}^{\al}\|_{L^2(\Si_\mathfrak t^u)}
   \lesssim &M^p\de^{(1-\ve0)p-\frac{3}{2}}\ln\mathfrak t+M^{2p-1}\delta^{(1-\ve0)(2p-1)-2}\sqrt{\int_0^u\tilde F_{1,\leq|\al|+2}(\mathfrak t,u')\d u'}\\
   &+M^{p-1}\de^{(1-\ve0)(p-1)-1}\big\{\sqrt{\Et_{1,\leq|\al|+2}(\mathfrak t,u)}+\ln\mathfrak t\sqrt{\Et_{2,\leq|\al|+2}(\mathfrak t,u)}\big\}.
  \end{split}
 \end{equation*}
Hence,
 \begin{equation*}
  \begin{split}
   \de^l\|E_{\mu}^{\al}\|_{L^2(\Si_\mathfrak t^u)}
   \lesssim& M^p\de^{(1-\ve0)p-\frac{3}{2}}\mathfrak t^{-1}\ln\mathfrak t+M^{2p-1}\delta^{(1-\ve0)(2p-1)-2}\mathfrak t^{-1}\sqrt{\int_0^u\tilde F_{1,\leq|\al|+2}(\mathfrak t,u')\d u'}\\
   &+M^{p-1}\de^{(1-\ve0)(p-1)-1}\mathfrak t^{-1}\big\{\sqrt{\Et_{1,\leq|\al|+2}(\mathfrak t,u)}+\ln\mathfrak t\sqrt{\Et_{2,\leq|\al|+2}(\mathfrak t,u)}\big\}.
  \end{split}
 \end{equation*}
It follows from the definition of $E_\mu^\al$ and \eqref{top L^2 estimate for nas^2 mu} that
 \begin{equation}\label{top order L^2 estimate of barmu}
  \begin{split}
   &\de^l\|\bar Z^{\al}\Des\mu\|_{L^2(\Si_\mathfrak t^u)}+\de^l\|\Lies_{\bar Z}^{\al}\nas^2\mu\|_{L^2(\Si_\mathfrak t^u)}\\
   \lesssim& M^p\de^{(1-\ve0)p-\frac{3}{2}}\mathfrak t^{-1}\ln\mathfrak t+M^{2p-1}\delta^{(1-\ve0)(2p-1)-2}\mathfrak t^{-1}\sqrt{\int_0^u\tilde F_{1,\leq|\al|+2}(\mathfrak t,u')\d u'}\\
   &+M^{p-1}\de^{(1-\ve0)(p-1)-1}\mathfrak t^{-1}\big\{\sqrt{\Et_{1,\leq|\al|+2}(\mathfrak t,u)}+\ln\mathfrak t\sqrt{\Et_{2,\leq|\al|+2}(\mathfrak t,u)}\big\}.
  \end{split}
 \end{equation}

 As in the estimate of $\slashed\nabla\Lies_Z^\al\check\chi$, when there exists at least one $\rho L$ in $Z^{\al}$, $\de^l\|\Lies_{Z}^{\al}\nas^2\mu\|_{L^2(\Si_\mathfrak t^u)}$ can be estimated with the help of \eqref{transport equation of mu} and Proposition \ref{Proposition higher order L^2 estimates}. Therefore,
 together with \eqref{top order L^2 estimate of barmu}, for any vector filed $Z\in\{\rho L,T,R_1,R_2,R_3\}$, we have
  \begin{equation}\label{top order L^2 estimate of mu}
 \begin{split}
 &\de^l\|Z^{\al}\Des\mu\|_{L^2(\Si_\mathfrak t^u)}+\de^l\|\Lies_{Z}^{\al}\nas^2\mu\|_{L^2(\Si_\mathfrak t^u)}\\
 \lesssim&  M^p\de^{(1-\ve0)p-\frac{3}{2}}\mathfrak t^{-1}\ln\mathfrak t+M^{2p-1}\delta^{(1-\ve0)(2p-1)-2}\mathfrak t^{-1}\sqrt{\int_0^u\tilde F_{1,\leq|\al|+2}(\mathfrak t,u')\d u'}\\
 &+M^{p-1}\de^{(1-\ve0)(p-1)-1}\mathfrak t^{-1}\big\{\sqrt{\Et_{1,\leq|\al|+2}(\mathfrak t,u)}+\ln\mathfrak t\sqrt{\Et_{2,\leq|\al|+2}(\mathfrak t,u)}\big\}.
 \end{split}
 \end{equation}

\section{Commutator estimates}\label{Section 7}

\subsection{Commuted covariant wave equation}\label{Section 7.1}

In order to get the energy estimates for $\vp_\gamma$ and its derivatives, we now choose $\Psi=\Psi_{\ga}^{|\al|+1}=Z_{|\al|+1}\cdots Z_1\vp_{\ga}$ and $\Phi=\Phi_{\ga}^{|\al|+1}=\mu\Box_g\Psi_{\ga}^{|\al|+1}$ in \eqref{energy inequality}. By commuting vectorfield $Z_k$ $(k=1,\dots,|\al|+1)$ with \eqref{linearized covariant wave equation}, the induction argument gives
\begin{equation}\label{higher order commuted covariant wave equation}
\begin{split}
\Phi_\gamma^{|\al|+1}=&\underbrace{\sum_{k=1}^{|\al|}\big(Z_{|\al|+1}+\leftidx{^{(Z_{|\al|+1})}}\lambda\big)\dots\big(Z_{|\al|+2-k}
+\leftidx{^{(Z_{|\al|+2-k})}}\lambda\big)\big(\mu\divg{}^{(Z_{|\al|+1-k})}C_{\ga}^{|\al|-k}\big)}_{\text{vanishes when $|\al|=0$}}\\
&+\mu\divg{}^{(Z_{|\al|+1})}C_{\ga}^{|\al|}+\big(Z_{|\al|+1}+\leftidx{^{(Z_{|\al|+1})}}\lambda\big)\dots\big(Z_{1}
+\leftidx{^{(Z_1)}}\lambda\big)\Phi_\gamma^0,\\
\end{split}
\end{equation}
where
 \begin{equation*}
  \begin{split}
   \divg{}^{(Z)}C_{\ga}^{j}&=\D_{\be}\big[\big({}^{(Z)}\pi^{\be\nu}-\frac{1}{2}g^{\be\nu}\trg{}^{(Z)}\pi\big)\p_{\nu}\Psi_{\ga}^{j}\big],\\
   {}^{(Z)}\la&=\frac{1}{2}\trg{}^{(Z)}\pi-\mu^{-1}Z\mu,\\
   \Psi_{\ga}^0&=\vp_{\ga},\ \Phi_{\ga}^0=\mu\Box_g\vp_{\ga}
  \end{split}
 \end{equation*}
with $\trg{}^{(Z)}\pi=g^{\al\be}{}^{(Z)}\pi_{\al\be}=-\f12\mu^{-1}{}^{(Z)}\pi_{L\underline L}+\f12\trgs{}^{(Z)}\pis$. Thus,
\begin{equation}\label{lamda}
\begin{split}
{}^{(\rho L)}\la=&\rho\trgs\check\chi+3,\\
{}^{(T)}\la=&-c^{-2}\mu\trgs\chi,\\
{}^{(R_i)}\la=&c^{-1}v_i\trgs\chi.
\end{split}
\end{equation}
Under the frame $\{L,\Lb,X_1,X_2\}$, $\mu\divg{}^{(Z)}C_{\ga}^{j}$ could be written as
 \begin{equation}\label{div C gamma alpha}
  \mu\divg{}^{(Z)}C_{\ga}^{j}=-\frac{1}{2}\big[\Lb+L(c^{-2}\mu)-c^{-2}\mu\trgs\chi\big]{}^{(Z)}C_{\ga,L}^{j}-\frac{1}{2}(L+\trgs\chi){}^{(Z)}C_{\ga,\Lb}^{j}+\nas^A(\mu{}^{(Z)}\slashed{C}_{\ga,A}^{j}),
 \end{equation}
where
 \begin{equation}\label{C gamma alpha}
  \begin{split}
   {}^{(Z)}C_{\ga,L}^{j}&=g({}^{(Z)}C_{\ga}^{j},L)=-\frac{1}{2}\trgs{}^{(Z)}\pis L\Psi_{\ga}^{j}+{}^{(Z)}\pis_{LA}\ds^A\Psi_{\ga}^{j},\\
   {}^{(Z)}C_{\ga,\Lb}^{j}&=g({}^{(Z)}C_{\ga}^{j},\Lb)=-\frac{1}{2}\mu^{-1}{}^{(Z)}\pi_{\Lb\Lb}L\Psi_{\ga}^{j}-\frac{1}{2}\trgs{}^{(Z)}\pis\Lb\Psi_{\ga}^{j}+{}^{(Z)}\pis_{\Lb A}\ds^A\Psi_{\ga}^{j},\\
   \mu{}^{(Z)}\slashed{C}_{\ga,A}^{j}&= g(\mu{}^{(Z)}C_{\ga}^{j},X_A)=-\frac{1}{2}{}^{(Z)}\pis_{\Lb A}L\Psi_{\ga}^{j}-\frac{1}{2}{}^{(Z)}\pis_{LA}\Lb\Psi_{\ga}^{j}\\
   &\qquad\qquad\qquad\qquad\quad+\frac{1}{2}{}^{(Z)}\pi_{L\Lb}\ds_A\Psi_{\ga}^{j}+\mu({}^{(Z)}\pis_{AB}-\frac{1}{2}\trgs{}^{(Z)}\pis\gs_{AB})\ds^B\Psi_{\ga}^{j}.
  \end{split}
 \end{equation}

Note that if we substitute \eqref{C gamma alpha} into \eqref{div C gamma alpha} directly, then a lengthy and tedious equality for $\mu\divg{}^{(Z)}C_{\ga}^{j}$ is obtained. To overcome this default and treat these terms more convenient, we will divide $\mu\divg{}^{(Z)}C_{\ga}^{j}$ into the following three parts as in \cite{MY17}:
 \begin{equation*}
  \mu\divg{}^{(Z)}C_{\ga}^{j}={}^{(Z)}\mathscr{N}_1^{j}+{}^{(Z)}\mathscr{N}_2^{j}+{}^{(Z)}\mathscr{N}_3^{j},
 \end{equation*}
where
 \begin{equation}\label{N1 pi Psi}
  \begin{split}
   {}^{(Z)}\mathscr{N}_1^{j}
   =&\big[\frac{1}{4}L(\mu^{-1}{}^{(Z)}\pi_{\Lb\Lb})+\frac{1}{4}\Lb(\trgs{}^{(Z)}\pis)-\frac{1}{2}\nas^A{}^{(Z)}\pis_{\Lb A}\big]L\Psi_{\ga}^{j}-\big[\frac{1}{2}\Lies_{\Lb}{}^{(Z)}\pis_{LA}\\
   &+\frac{1}{2}\Lies_L{}^{(Z)}\pis_{\Lb A}-\frac{1}{2}\ds_A{}^{(Z)}\pi_{L\Lb}-\nas^B(\mu{}^{(Z)}\slashed\pi_{AB}-\f12\mu\trgs{}^{(Z)}\pis\slashed g_{AB})\big]\ds^A\Psi_{\ga}^{j}\quad\\
   &+\big[\frac{1}{4}L(\trgs{}^{(Z)}\pis)-\frac{1}{2}\nas^A{}^{(Z)}\pis_{LA}\big]\Lb \Psi_{\ga}^{j},
  \end{split}
 \end{equation}
 \begin{equation}\label{N2 pi Psi}
  \begin{split}
   {}^{(Z)}\mathscr{N}_2^{j}
   =&\frac{1}{2}\trgs{}^{(Z)}\pis(L+\frac{1}{2}\trgs\chi)\Lb\Psi_{\ga}^{j}+\frac{1}{4}\mu^{-1}{}^{(Z)}\pi_{\Lb\Lb}L^2\Psi_{\ga}^{j}-{}^{(Z)}\pis_{\Lb A}\ds^AL\Psi_{\ga}^{j}\\
   &-{}^{(Z)}\pis_{LA}\ds^A\Lb\Psi_{\ga}^{j}+\frac{1}{2}{}^{(Z)}\pi_{L\Lb}\Des\Psi_{\ga}^{j}+\mu({}^{(Z)}\slashed\pi_{AB}-\f12\trgs{}^{(Z)}\pis\slashed g_{AB})\nas_{AB}^2\Psi_{\ga}^{j},
  \end{split}
 \end{equation}
 \begin{equation}\label{N3 pi Psi}
  \begin{split}
   {}^{(Z)}\mathscr{N}_3^{j}
   =&\big[\frac{1}{4}\mu^{-1}\trgs\chi{}^{(Z)}\pi_{\Lb\Lb}-\frac{1}{4}c^{-2}\mu\trgs\chi\trgs{}^{(Z)}\pis
   +\frac{1}{2}{}^{(Z)}\pis_{LA}\ds^A(c^{-2}\mu)\big]L\Psi_{\ga}^{j}\ \quad\quad\\
   &+\big[\frac{1}{2}c^2\ds_A(c^{-2}\mu)\trgs{}^{(Z)}\pis-\frac{1}{2}L(c^{-2}\mu){}^{(Z)}\pis_{LA}-\frac{1}{2}\trgs\chi{}^{(Z)}\pis_{\Lb A}\\
   &+\frac{1}{2}c^{-2}\mu\trgs\chi{}^{(Z)}\pis_{LA}-c^{-2}\mu{}^{(Z)}\pis_L^B\chi_{AB}+{}^{(Z)}\pis_{\Lb}^B\chi_{AB}\big]\ds^A\Psi_{\ga}^{j}.
  \end{split}
 \end{equation}
One sees that ${}^{(Z)}\mathscr{N}_1^{j}$ collects the products of the first order derivatives of
the deformation tensor and the first order derivatives of $\Psi_\ga^{j}$, and ${}^{(Z)}\mathscr{N}_2^{j}$
contains the terms which are the products of the deformation tensor and the second order derivatives of $\Psi_\ga^{j}$
except the first term which has the better smallness and higher time-decay rate due to \eqref{transport equation of Lb vp0}. In addition, ${}^{(Z)}\mathscr{N}_3^{j}$
is the collections of the products of the deformation tensor and the first order derivatives of $\Psi_\ga^{j}$
which can be treated more easily.

Through making the preparations for the $L^2$ estimates of the quantities in Subsection \ref{Section 5.2} and Section \ref{Section 6},
we are ready to handle the error terms $\int_{D^{\mathfrak t,u}}\rho^{2s}|\Phi\cdot L\Psi|$ and $\int_{D^{\mathfrak t,u}}|\Phi\cdot\Lb\Psi|$ in \eqref{energy inequality}, and then get the final energy estimates for $\vp_{\ga}$ and its derivatives.

\subsection{Error estimates for the terms originating from ${}^{(Z)}\mathscr{N}_1^{j}$}\label{Section 7.2}

For the most difficult term ${}^{(Z)}\mathscr{N}_1^{0}$ in $\Phi_{\ga}^{|\al|+1}$, the number of the top order derivatives is $|\al|$, which means that there will be some terms containing the $(|\al|+1)^{\text{th}}$ order derivatives of the deformation tensors. In this case, $\Et_{i,\leq|\al|+3}$ will appear in the right hand side of \eqref{energy inequality} if one only
adopts Proposition \ref{Proposition higher order L^2 estimates}.
This leads to that it can not be absorbed by the left hand side of \eqref{energy inequality}.
To overcome such a difficulty, we will carefully examine the expression of ${}^{(Z)}\mathscr{N}_1^{j}$ and apply the estimates
in Section \ref{Section 6} to deal with the top order derivatives of $\chi$ and $\mu$.

By substituting the components of the deformation tensor, we can obtain the expression of ${}^{(Z)}\mathscr{N}_1^{j}$.

\begin{itemize}
	\item
\textbf{The case of $Z=\rho L$}

By \eqref{L pi}, one has
 \begin{equation}\label{N1 pi Psi rho L}
  \begin{split}
   {}^{(\rho L)}\mathscr{N}_1^{j}
   =&\Big\{\rho L^2(c^{-2}\mu)+\underline{\frac{1}{2}\Lb(\rho\trgs\chi)-\rho\nas^A\big[c^2\ds_A(c^{-2}\mu)\big]}\Big\}L\Psi_\ga^j-\Big\{\Lies_L[\rho c^2\ds_A(c^{-2}\mu)]\\
   &+\ds_A(\rho L\mu+\mu)\uwave{-2\rho\nas^B(\mu\check\chi_{AB}-\f12\mu\trgs\check\chi\slashed g_{AB})}\Big\}\ds^A\Psi_\ga^j+\frac{1}{2}L(\rho\trgs\check\chi)\Lb\Psi_\ga^j.
  \end{split}
 \end{equation}

For the terms with underline in \eqref{N1 pi Psi rho L}, we point out there exists an important cancelation.
In fact, it follows from \eqref{transport equation of trgs chi along T} that
 \begin{equation}\label{important cancelation 1}
  \begin{split}
   &\frac{1}{2}\Lb(\rho\trgs\chi)-\rho\nas^A\big[c^2\ds_A(c^{-2}\mu)\big]\\
   =&\rho J-\trgs\chi+\frac{1}{2}c^{-2}\mu L(\rho\trgs\chi)-\rho\nas^A\big[c^2\mu\ds_A(c^{-2})\big],
  \end{split}
 \end{equation}
which implies that the term $\Des\mu$ has been eliminated.

For the term with wavy line in \eqref{N1 pi Psi rho L}, it follows from \eqref{elliptic equation of chi} that
 \begin{equation*}
  \begin{split}
   &-2\rho\nas^B(\mu\check\chi_{AB}-\f12\mu\trgs\check\chi\slashed g_{AB})\\
   =&-\rho\mu\uwave{\ds_A\trgs\chi}+2\rho\ze^B\check\chi_{AB}-2\rho\ze_A\trgs\check\chi-2\zeta_A-2\rho\ds^B\mu(\check\chi_{AB}-\f12\trgs\check\chi\slashed g_{AB}).
  \end{split}
 \end{equation*}

In addition, we notice that there are some terms  in \eqref{N1 pi Psi rho L}
whose factors are the derivatives with respect to $L$
and which contain the second order derivative of $\mu$ or the first derivative of $\check\chi$. In this case,
the derivatives of these terms can be treat directly by Proposition \ref{Proposition higher order L^2 estimates}
since $\chi$ and $\mu$ satisfy the transport equations \eqref{transport equation of chi} and \eqref{transport equation of mu}
respectively.

Therefore, we can arrive at
 \begin{equation}\label{principal term of N1 rho L}
  {}^{(\rho L)}\mathscr{N}_1^{j}=\rho\mu\ds\trgs\chi\cdot\ds\Psi_\ga^j+\lot.
 \end{equation}

\item
\textbf{The case of $Z=T$}

By \eqref{T pi}, we have
 \begin{equation}\label{N1 pi Psi T}
  \begin{split}
   {}^{(T)}\mathscr{N}_1^{j}
   =&\Big\{L T(c^{-2}\mu)\underbrace{-\frac{1}{2}\Lb(c^{-2}\mu\trgs\chi)+\frac{1}{2}\nas^A\big[\mu\ds_A(c^{-2}\mu)\big]}\Big\}L\Psi_\ga^j+\Big\{\underline{\frac{1}{2}\Lies_{\Lb}\big[c^2\ds_A(c^{-2}\mu)\big]}\\
   &+\frac{1}{2}\Lies_L\big[\mu\ds_A(c^{-2}\mu)\big]\underline{-\ds_AT\mu}\uwave{-2\nas^B\big(c^{-2}\mu^2(\check\chi_{AB}-\f12\trgs\check\chi\slashed g_{AB})\big)}\Big\}\ds^A\Psi_\ga^j\\
   &+\Big\{-\frac{1}{2}L(c^{-2}\mu\trgs\chi)+\underbrace{\frac{1}{2}\nas^A\big[c^2\ds_A(c^{-2}\mu)\big]}\Big\}\Lb\Psi_\ga^j.
  \end{split}
 \end{equation}

For the terms with underline in \eqref{N1 pi Psi T}, one has
 \begin{equation}\label{important cancelation 2}
  \begin{split}
   &\frac{1}{2}\Lies_{\Lb}\big[c^2\ds_A(c^{-2}\mu)\big]-\ds_AT\mu =\frac{1}{2}c^{-2}\mu\ds_AL\mu+\frac{1}{2}\Lies_{\Lb}\big[c^2\mu\ds_A(c^{-2})\big],
  \end{split}
 \end{equation}
which implies that the term $\ds_AT\mu$ has been eliminated.

For the terms with braces  in \eqref{N1 pi Psi T}, we have
 \begin{equation*}
  \begin{split}
   &-\frac{1}{2}\Lb(c^{-2}\mu\trgs\chi)+\frac{1}{2}\nas^A\big[\mu\ds_A(c^{-2}\mu)\big]\\
   =&-\frac{1}{2}c^{-2}\mu\underbrace{\Des\mu}-c^{-2}\mu J-\frac{1}{2}c^{-4}\mu^2L\trgs\chi-\frac{1}{2}\Lb(c^{-2}\mu)\trgs\chi+\frac{1}{2}\ds(c^{-2}\mu)\cdot\ds\mu+\frac{1}{2}\nas^A\big[\mu^2\ds_A(c^{-2})\big]
  \end{split}
 \end{equation*}
and
 \begin{equation*}
  \frac{1}{2}\nas^A\big[c^2\ds_A(c^{-2}\mu)\big]=\frac{1}{2}\underbrace{\Des\mu}+\frac{1}{2}\nas^A\big[c^2\mu\ds_A(c^{-2})\big].
 \end{equation*}

For the terms with wavy line  in \eqref{N1 pi Psi T},
 \begin{equation*}
 \begin{split}
&-2\nas^B\big(c^{-2}\mu^2(\check\chi_{AB}-\f12\trgs\check\chi\slashed g_{AB})\big)\\
=&-c^{-2}\mu^2\uwave{\ds_A\trgs\chi}+2c^{-2}\mu\ze^B\check\chi_{AB}-2c^{-2}\mu\ze_A\trgs\check\chi-2c^{-2}\mu\rho^{-1}\zeta_A-2\ds^B(c^{-2}\mu^2)(\check\chi_{AB}-\f12\trgs\check\chi\slashed g_{AB}).
\end{split}
 \end{equation*}

Therefore, we actually arrive at
 \begin{equation}\label{principal term of N1 T}
  {}^{(T)}\mathscr{N}_1^{j}=(\Des\mu)T\Psi_\ga^j-c^{-2}\mu^2\ds\trgs\chi\cdot\ds\Psi_\ga^j+\lot.
 \end{equation}

\item
\textbf{The case of $Z=R_i$}

By \eqref{Ri pi}, we have
 \begin{equation}\label{N1 pi Psi Ri}
  \begin{split}
   {}^{(R_i)}\mathscr{N}_1^{j}
   =&\Big\{LR_i(c^{-2}\mu)+\underline{\frac{1}{2}\Lb(c^{-1}v_i\trgs\chi)}\underbrace{-\frac{1}{2}\nas^A(c^{-2}\mu R_i^B\chic_{AB})}\underline{-\nas^A(c^{-1}v_i\ds_A\mu)}\\
   &-\frac{1}{2}\nas^A\big[-2c^{-2}(c-1)\mu\rho^{-1}\gs_{AB}R_i^B-3c^{-2}\mu v_i\ds_Ac+c^{-2}\mu\ep_{ijk}\check{L}^j\ds_Ax^k\\
   &+2c^{-1}\mu\ep_{ijk}\check{T}^j\ds_Ax^k\big]\Big\}L\Psi_\ga^j+\Big\{\underline{\frac{1}{2}\Lies_{\Lb}(R_i^B\chic_{AB}}-\ep_{ijk}\check{L}^j\ds_Ax^k+v_i\ds_Ac)\\
   &-\frac{1}{2}\Lies_L{}^{(R_i)}\pis_{\Lb A}\underline{-\ds_AR_i\mu}+\uwave{\nas^B\big(2c^{-1}\mu v_i(\check\chi_{AB}-\f12\trgs\check\chi\slashed g_{AB})\big)}\Big\}\ds^A\Psi_\ga^j\\
   &+\Big\{\frac{1}{2}L(c^{-1}v_i\trgs\chi)+\underbrace{\frac{1}{2}\nas^A(R_i^B\chic_{AB}}-\ep_{ijk}\check{L}^j\ds_Ax^k+v_i\ds_Ac)\Big\}\Lb\Psi_\ga^j.
  \end{split}
 \end{equation}

For the terms with underline in \eqref{N1 pi Psi Ri}, one has that by \eqref{transport equation of chi along T},
 \begin{equation*}
  \begin{split}
   &\frac{1}{2}\Lb(c^{-1}v_i\trgs\chi)-\nas^A(c^{-1}v_i\ds_A\mu)\\
   =&c^{-1}v_iJ+\frac{1}{2}c^{-3}\mu v_iL\trgs\chi+\frac{1}{2}\Lb(c^{-1}v_i)\trgs\chi-\ds(c^{-1}v_i)\cdot\ds\mu\\
   =&\lot
  \end{split}
 \end{equation*}
and
 \begin{equation*}
  \begin{split}
   &\quad\frac{1}{2}\Lies_{\Lb}(R_i^B\chic_{AB})-\ds_AR_i\mu=R_i^B\Lies_T\chic_{AB}-\nabla_A(R_i^B\ds_B\mu)+\lot=\lot,
  \end{split}
 \end{equation*}
 which implies that the terms $\Des\mu$ and $\ds_AR_i\mu$ have been eliminated.

For the terms with braces  in \eqref{N1 pi Psi Ri}, we have
 \begin{equation*}
  \begin{split}
   &-\frac{1}{2}\nas^A(c^{-2}\mu R_i^B\chic_{AB})=-\frac{1}{2}c^{-2}\mu\underbrace{R_i\trgs\chi}+\lot
  \end{split}
 \end{equation*}
and
 \begin{equation*}
  \frac{1}{2}\nas^A(R_i^B\chic_{AB})=\frac{1}{2}\underbrace{R_i\trgs\chi}+\lot.
 \end{equation*}

For the terms with wavy line  in \eqref{N1 pi Psi Ri},
 \begin{equation*}
  \begin{split}
   -\nas^B\big(2c^{-1}\mu v_i(\check\chi_{AB}-\f12\trgs\check\chi\slashed g_{AB})\big)=-c^{-1}\mu v_i\uwave{\ds_A\trgs\chi}+\lot.
  \end{split}
 \end{equation*}

Therefore, we arrive at
 \begin{equation}\label{principal term of N1 Ri}
  {}^{(R_i)}\mathscr{N}_1^{j}=R_i^A(\ds_A\trgs\chi)T\Psi_\ga^j-c^{-1}\mu v_i\ds\trgs\chi\cdot\ds\Psi_\ga^j+\lot.
 \end{equation}
\end{itemize}

By substituting ${}^{(Z)}\mathscr{N}_1^{j}$ into \eqref{higher order commuted covariant wave equation},
the resulting terms $(Z^{\al}\Des\mu)T\Psi_{\ga}^0$ from \eqref{principal term of N1 T} and $R_i^A(\ds_AZ^{\al}\trgs\chi) T\Psi_{\ga}^0$ from \eqref{principal term of N1 Ri} need to be treated especially.

Note that $(Z^{\al}\Des\mu)T\Psi_{\ga}^0$ comes from $Z^\al({}^{(T)}\mathscr{N}_1^{0})$ with $Z^{\al}=Z_{|\al|+1}\cdots Z_2$ and $Z_1=T$.
When the number of $T$ in $Z_{|\al|+1}\cdots Z_1$ is $l$, then $Z^\al$ contains $l-1$ vector field $T$'s.
In view of \eqref{energy inequality}, one has that by \eqref{top order L^2 estimate of mu},
 \begin{equation}\label{top error estimate 1}
  \begin{split}
   &\de^{2l}\int_{D^{\mathfrak t,u}}\rho^{2s}|(Z^{\al}\Des\mu)T\Psi_{\ga}^0||L\Psi_{\ga}^{|\al|+1}|\\
   \lesssim&M^2\delta^{3-2\ve0}\int_{t_0}^\mathfrak t\tau^{-2+2s}\big(\de^{l-1}\|Z^{\al}\Des\mu\|_{L^2(\Si_{\tau}^u)}\big)^2d\tau+\delta^{-1}\int_{D^{\mathfrak t,u}}\rho^{2s}\big(\de^{l}|L\Psi_{\ga}^{|\al|+1}|\big)^2\\
   \lesssim&M^{2p+2}\de^{(1-\ve0)(2p+2)-2}+\de^{-1}\int_0^u\tilde F_{1,\leq|\al|+2}(\mathfrak t,u')\d u'\\
   &+\delta\int_{t_0}^\mathfrak t\tau^{-4+2s}\big\{\Et_{1,\leq|\al|+2}(\tau,u)+(\ln^2\tau)\Et_{2,\leq|\al|+2}(\tau,u)\big\}\d\tau
  \end{split}
 \end{equation}
and
 \begin{equation}\label{top error estimate 2}
  \begin{split}
   &\de^{2l+1}\int_{D^{\mathfrak t,u}}|(Z^{\al}\Des\mu)T\Psi_{\ga}^0||\Lb\Psi_{\ga}^{|\al|+1}|\\
   \lesssim& M\de^{2-\ve0}\int_{t_0}^\mathfrak t\tau^{-1}\big(\de^{l-1}\|Z^{\al}\Des\mu\|_{L^2(\Si_{\tau}^u)}\big)\big(\de^l\|\Lb\Psi_{\ga}^{|\al|+1}\|_{L^2(\Si_{\tau}^u)}\big)\d\tau\\
   \lesssim&M^{2p+2}\de^{(1-\ve0)(2p+2)-2}+\de\int_0^u\tilde F_{1,\leq|\al|+2}(\mathfrak t,u')\d u'\\
   &+\delta\int_{t_0}^\mathfrak t\tau^{-2}\big\{\Et_{1,\leq|\al|+2}(\tau,u)+(\ln\tau)\Et_{2,\leq|\al|+2}(\tau,u)\big\}\d\tau.
  \end{split}
 \end{equation}

For the term $R_i^A(\ds_AZ^{\al}\trgs\chi) T\Psi_{\ga}^0$, we know $Z_1=R_i$. If there are $l$
vector field $T$'s in $Z_{|\al|+1}\cdots Z_1$, then the number of $T$ in $Z^\al=Z_{|\al|+1}\cdots Z_2$ is
just $l$. Thus, with the help of \eqref{top order L^2 estimate of chi}, we obtain

 \begin{equation}\label{top error estimate 3}
  \begin{split}
   &\de^{2l}\int_{D^{\mathfrak t,u}}\rho^{2s}|R_i^A(\ds_AZ^{\al}\trgs\chi) T\Psi_{\ga}^0||L\Psi_{\ga}^{|\al|+1}|\\
   \lesssim&M^2\delta^{1-2\ve0}\int_{t_0}^\mathfrak t\rho^{2s}\big(\de^l\|\ds Z^{\al}\trgs\chi\|_{L^2(\Si_{\tau}^u)}\big)^2d\tau+\delta^{-1}\int_{D^{\mathfrak t,u}}\rho^{2s}\big(\de^{l}|L\Psi_{\ga}^{|\al|+1}|\big)^2\\   \lesssim&M^{2p+2}\de^{(1-\ve0)(2p+2)-2}+\de^{-1}\int_0^u\tilde F_{1,\leq|\al|+2}(\mathfrak t,u')\d u'\\
   &+\delta\int_{t_0}^\mathfrak t\tau^{-4+2s}\ln^2\tau\big\{\Et_{1,\leq|\al|+2}(\tau,u)+\Et_{2,\leq|\al|+2}(\tau,u)\big\}\d\tau
  \end{split}
 \end{equation}
and
 \begin{equation}\label{top error estimate 4}
  \begin{split}
   &\de^{2l+1}\int_{D^{\mathfrak t,u}}|R_i^A(\ds_AZ^{\al}\trgs\chi)T\Psi_{\ga}^0||\Lb\Psi_{\ga}^{|\al|+1}|\\
   \lesssim&M\de^{1-\ve0}\int_{t_0}^\mathfrak t\big(\de^l\|\ds Z^{\al}\trgs\chi\|_{L^2(\Si_{\tau}^u)}\big)\big(\de^{l}\|\Lb\Psi_{\ga}^{\al+1}\|_{L^2(\Si_{\tau}^u)}\big)\d\tau\\
\lesssim&M^{2p+2}\de^{(1-\ve0)(2p+2)-2}+\de^{-1}\int_0^u\tilde F_{1,\leq|\al|+2}(\mathfrak t,u')\d u'\\
&+\delta\int_{t_0}^\mathfrak t\tau^{-3/2}\big\{\Et_{1,\leq|\al|+2}(\tau,u)+\Et_{2,\leq|\al|+2}(\tau,u)\big\}\d\tau.
  \end{split}
 \end{equation}

For the other terms coming from $\sum_{k=1}^{|\al|}\big(Z_{|\al|+1}+\leftidx{^{(Z_{|\al|+1})}}\lambda\big)\dots\big(Z_{|\al|+2-k}
	+\leftidx{^{(Z_{|\al|+2-k})}}\lambda\big)\big({}^{(Z_{|\al|+1-k})}\mathscr{N}_1^{|\al|-k}$
$+{}^{(Z_{|\al|+1})}\mathscr{N}_1^{|\al|}\big)$, they are easier to be handled in \eqref{energy inequality}
since \eqref{top order L^2 estimate of chi} can be adopted to treat $\ds Z^\al\trgs \chi$ as in \eqref{top error estimate 3}
and \eqref{top error estimate 4}.
 On the other hand, Proposition \ref{Proposition higher order L^2 estimates} can be used to estimate the others
 in ${}^{(Z)}\mathscr{N}_1^{j}$.

\subsection{Error estimates for the left terms in \eqref{higher order commuted covariant wave equation}}\label{Section 7.3}

\vskip 0.2 true cm

\textbf{$\bullet$ Display the expressions of ${}^{(Z)}\mathscr{N}_2^{j}$ and ${}^{(Z)}\mathscr{N}_3^{j}$}

\vskip 0.1 true cm

Similarly to ${}^{(Z)}\mathscr{N}_1^{j}$, one has
 \begin{equation}\label{N2 pi Psi rho L}
  \begin{split}
   {}^{(\rho L)}\mathscr{N}_2^{j}
   =&\rho\trgs\chi(L+\frac{1}{2}\trgs\chi)\Lb\Psi_\ga^j+\big[\rho L(c^{-2}\mu)-c^{-2}\mu+2\big]L^2\Psi_\ga^j-2\rho c^2\ds(c^{-2}\mu)\cdot\ds L\Psi_\ga^j\\
   &-(\rho L\mu+\mu)\Des\Psi_\ga^j+2\rho\mu(\check\chi^{AB}-\f12\trgs\check\chi\slashed g^{AB})\nas^2_{AB}\Psi_\ga^j,
  \end{split}
 \end{equation}

 \begin{equation}\label{N2 pi Psi T}
  \begin{split}
   {}^{(T)}\mathscr{N}_2^{j}
   =&-c^{-2}\mu\trgs\chi(L+\frac{1}{2}\trgs\chi)\Lb\Psi_\ga^j+T(c^{-2}\mu)L^2\Psi_\ga^j+\mu\ds(c^{-2}\mu)\cdot\ds L\Psi_\ga^j\\
   &+c^2\ds_A(c^{-2}\mu)\ds^A\Lb\Psi_\ga^j-T\mu\Des\Psi_\ga^j-2c^{-2}\mu^2(\check\chi^{AB}-\f12\trgs\check\chi\slashed g^{AB})\nas^2_{AB}\Psi_\ga^j,\qquad\qquad
  \end{split}
 \end{equation}

 \begin{equation}\label{N2 pi Psi Ri}
  \begin{split}
   {}^{(R_i)}\mathscr{N}_2^{j}
   =&c^{-1}v_i\trgs\chi(L+\frac{1}{2}\trgs\chi)\Lb\Psi_\ga^j-\big(c^{-2}\mu{}^{(R_i)}\pis_{LA}+2{}^{(R_i)}\pis_{TA}\big)\ds^AL\Psi_\ga^j-R_i\mu\Des\Psi_\ga^j\ \\
   &+{}^{(R_i)}\pis_{LA}\ds^A\Lb\Psi_\ga^j+R_i(c^{-2}\mu)L^2\Psi_\ga^j+2c^{-1}\mu v_i(\check\chi^{AB}-\f12\trgs\check\chi\slashed g^{AB})\nas^2_{AB}\Psi_\ga^j
  \end{split}
 \end{equation}
and
 \begin{equation}\label{N3 pi Psi rho L}
  \begin{split}
   {}^{(\rho L)}\mathscr{N}_3^{j}
   =&\trgs\chi\big\{\rho L(c^{-2}\mu)-2c^{-2}(\mu-1)+2(1-c^{-2})-\frac{1}{2}\rho c^{-2}\mu\trgs\check\chi\big\}L\Psi_\ga^j\qquad\qquad\qquad\\
   &+2\rho c^2\ds^B(c^{-2}\mu)\chi_{AB}\ds^A\Psi_\ga^j,
  \end{split}
 \end{equation}

 \begin{equation}\label{N3 pi Psi T}
  \begin{split}
   {}^{(T)}\mathscr{N}_3^{j}
   =&\big[T(c^{-2}\mu)\trgs\chi+\frac{1}{2}c^{-4}\mu^2(\trgs\chi)^2-\frac{1}{2}c^2|\ds(c^{-2}\mu)|^2\big]L\Psi_\ga^j+\big[\frac{1}{2}c^2L(c^{-2}\mu)\ds_A(c^{-2}\mu)\\
   &-\mu\trgs\chi\ds_A(c^{-2}\mu)\big]\ds^A\Psi_\ga^j,
  \end{split}
 \end{equation}

 \begin{equation}\label{N3 pi Psi Ri}
  \begin{split}
   {}^{(R_i)}\mathscr{N}_3^{j}
   =&\big[R_i(c^{-2}\mu)\trgs\chi-\frac{1}{2}c^{-3}\mu v_i(\trgs\chi)^2+\frac{1}{2}{}^{(R_i)}\pis_{LA}\ds^A(c^{-2}\mu)\big]L\Psi_\ga^j\\
   &+\big[cv_i\ds_A(c^{-2}\mu)\trgs\chi-\frac{1}{2}L(c^{-2}\mu){}^{(R_i)}\pis_{LA}+2{}^{(R_i)}\pis_{T}^B\check\chi_{AB}-{}^{(R_i)}\pis_{TA}\trgs\check\chi\big]\ds^A\Psi_\ga^j.
  \end{split}
 \end{equation}

Set
\begin{equation*}
\begin{split}
\mathscr M^{\al}=&\sum_{k=1}^{|\al|}\big(Z_{|\al|+1}+\leftidx{^{(Z_{|\al|+1})}}\lambda\big)\dots\big(Z_{|\al|+2-k}+\leftidx{^{(Z_{|\al|+2-k})}}\lambda\big)\big({}^{(Z_{|\al|+1-k})}\mathscr{N}_2^{|\al|-k}+{}^{(Z_{|\al|+1-k})}\mathscr{N}_3^{|\al|-k}\big)\\
&+{}^{(Z_{|\al|+1})}\mathscr{N}_2^{|\al|}+{}^{(Z_{|\al|+1})}\mathscr{N}_3^{|\al|}.
\end{split}
\end{equation*}
Then using \eqref{N2 pi Psi rho L}-\eqref{N3 pi Psi Ri}, Proposition \ref{Proposition higher order L^infty estimates} and \ref{Proposition higher order L^2 estimates} to derive
\begin{equation*}
\begin{split}
\delta^l\|\mathscr M^\al\|_{L^2(\Sigma_\tau^u)}\lesssim M^{p+1}\delta^{(1-\ve0)(p+1)-\f12}\tau^{-2}+\tau^{-1-s}\sqrt{\Et_{1,\leq|\al|+2}(\tau,u)}+\delta\tau^{-2}\sqrt{\Et_{2,\leq|\al|+2}(\tau,u)}.
\end{split}
\end{equation*}
Therefore,
\begin{equation}\label{error estimate 5}
\begin{split}
&\delta^{2l}\int_{D^{\mathfrak t,u}}|\mathscr M^\al|(\rho^{2s}|L\Psi_\ga^{|\al|+1}|+\delta|\underline L\Psi_\ga^{|\al|+1}|)\\
\lesssim&\delta^{2l+1}\int_{t_0}^\mathfrak t\rho^{2s}\|\mathscr M^\al\|_{L^2(\Sigma_\tau^u)}^2d\tau+\delta^{-1}\int_{D^{\mathfrak t,u}}\rho^{2s}\delta^{2l}|L\Psi_\ga^{|\al|+1}|^2+\delta^{2l+1}\int_{t_0}^\mathfrak t\|\mathscr M^\al\|_{L^2(\Sigma_\tau^u)}\|\underline L\Psi_\ga^{|\al|+1}\|_{L^2(\Sigma_\tau^u)}d\tau\\
\lesssim&M^{2p+2}\delta^{(1-\ve0)(2p+2)}+\de^{-1}\int_0^u\tilde F_{1,\leq|\al|+2}(\mathfrak t,u')\d u'+\delta\int_{t_0}^\mathfrak t\tau^{-1-s}\big\{\Et_{1,\leq|\al|+2}(\tau,u)+\Et_{2,\leq|\al|+2}(\tau,u)\big\}\d\tau.
\end{split}
\end{equation}

\vskip 0.2 true cm
\textbf{$\bullet$ Error estimates on $\big(Z_{|\al|+1}+\leftidx{^{(Z_{|\al|+1})}}\lambda\big)\dots\big(Z_{1}
	+\leftidx{^{(Z_1)}}\lambda\big)\Phi_{\ga}^0$}

\vskip 0.1 true cm

By \eqref{covariant wave equation}, \eqref{F la} and Proposition \ref{Proposition higher order L^2 estimates}, one has
\begin{equation*}
\begin{split}
&\delta^l\|\big(Z_{|\al|+1}+\leftidx{^{(Z_{|\al|+1})}}\lambda\big)\dots\big(Z_{1}
+\leftidx{^{(Z_1)}}\lambda\big)\Phi_{\ga}^0\|_{L^2(\Sigma_\tau^u)}\\
\lesssim&M^{2p+1}\delta^{(1-\ve0)(2p+1)}\tau^{-(p+2)}+M^p\delta^{(1-\ve0)p-1}\tau^{-p}\big\{\tau^{-s}\sqrt{\Et_{1,\leq|\al|+2}(\tau,u)}
+\delta\tau^{-1}\sqrt{\Et_{2,\leq|\al|+2}}\big\}.
\end{split}
\end{equation*}
Then
 \begin{equation}\label{error estimate 9}
  \begin{split}
   &\quad\de^{2l}\int_{D^{\mathfrak t,u}}|\big(Z_{|\al|+1}+\leftidx{^{(Z_{|\al|+1})}}\lambda\big)\dots\big(Z_{1}
   +\leftidx{^{(Z_1)}}\lambda\big)\Phi_{\ga}^0|\big(\rho^{2s}|L\Psi_{\ga}^{|\al|+1}|+\delta|\underline L\Psi_\ga^{|\al|+1}|\big)\\
\lesssim&M^{4p+2}\delta^{(1-\ve0)(4p+2)}+\de^{-1}\int_0^u\tilde F_{1,\leq|\al|+2}(\mathfrak t,u')\d u'+\delta\int_{t_0}^\mathfrak t\tau^{-p-s}\big\{\Et_{1,\leq|\al|+2}(\tau,u)+\Et_{2,\leq|\al|+2}(\tau,u)\big\}\d\tau.
  \end{split}
 \end{equation}

\section{Global existence}\label{Section 8}

\subsection{Global existence of $\phi$ near $C_0$}\label{Section 8.1}

By substituting the estimates \eqref{top error estimate 1}-\eqref{top error estimate 4}
and \eqref{error estimate 5}-\eqref{error estimate 9} into \eqref{energy inequality},
then it follows from Gronwall's inequality that by $p>p_c$,
 \begin{equation}\label{main energy estimate}
 \begin{split}
  &\Et_{1,\leq 2N-4}(\mathfrak t,u)+\tilde F_{1,\leq 2N-4}(\mathfrak t,u)+\de\Et_{2,\leq 2N-4}(\mathfrak t,u)
  +\de \tilde F_{2,\leq 2N-4}(\mathfrak t,u)\\
  \lesssim& M^{2p+2}\de^{(1-\ve0)(2p+2)-2}+\delta^{2-2\ve0}\\
  \lesssim&\delta^{2-2\ve0}.
  \end{split}
 \end{equation}

Together with the following Sobolev-type embedding formula on $S_{\mathfrak t,u}$
 \begin{equation}\label{Sobolev embedding on S_tu}
  \|f\|_{L^{\infty}(S_{\mathfrak t,u})}\lesssim\frac{1}{\mathfrak t}\sum_{|\be|\leq 2}\|R_i^{\be}f\|_{L^2(S_{\mathfrak t,u})},
 \end{equation}
we have that for $|\al|\leq 2N-7$,
 \begin{equation}\label{BA closed}
  \begin{split}
   \de^l|Z^{\al}\vp_{\ga}|
   &\lesssim\frac{1}{\mathfrak t}\sum_{|\be|\leq 2}\de^l\|R_i^{\be}Z^{\al}\vp_{\ga}\|_{L^2(S_{\mathfrak t,u})}\\
   &\lesssim\de^{\frac{1}{2}}\big(\sqrt{E_{1,\leq 2N-4}}+\sqrt{E_{2,\leq 2N-4}}\big)\mathfrak t^{-1}\\
   &\lesssim\de^{1-\ve0}\mathfrak t^{-1},
  \end{split}
 \end{equation}
where the related bounds are independent of $M$. This improves the bootstrap assumptions \eqref{BA}, and
hence we have proved the global existence of the solution $\phi$ to the equation \eqref{main equation} with the
initial data \eqref{initial data}-\eqref{condition on data} in the domain $D^{\mathfrak t,4\de}$. Moreover, since \eqref{BA closed} holds,
with the same argument as in the proof of Proposition \ref{Proposition higher order L^infty estimates},
the constants in \eqref{P6.1} can be refined to be independent of $M$ when $|\al|\leq 2N-9$.

When $N$ is large enough, we now claim that on $\Ct_{2\de}=\{(t,x):t\geq1+2\delta,t-r=2\delta\}$,
 \begin{equation}\label{estimate of phi on Ct_2de}
  |\Ga^{\al}\phi|\lesssim\de^{2-\ve0}t^{-1},\ |\al|\leq 2N-8,
 \end{equation}
here and below $\Ga\in\{(t+r)\Lt,\Lbt,\Om_1,\Om_2,\Om_3\}$.

We next focus on the proof of \eqref{estimate of phi on Ct_2de}.

It follows from $T=c^{-1}\mu \Tr$ and $\check{T}^i=\Tr^i+\frac{x^i}{\rho}$ that
 \begin{equation*}
  \om_iT^i=\om_ic^{-1}\mu\Tr^i=c^{-1}\mu(\om_i\check{T}^i-\frac{r}{\rho}).
 \end{equation*}
This, together with $\p_i=c^2\mu^{-2}T^iT+\ds^Ax^iX_A$, derives
 \begin{equation*}
  \p_r=\sum_{i=1}^3\om_i\p_i=c\mu^{-1}(\om_i\check{T}^i-\frac{r}{\rho})T+\om_i\ds^Ax^iX_A.
 \end{equation*}
In addition, by \eqref{vi} and \eqref{Ri}, one has
 \begin{equation*}
  \Om_i=R_i+\ep_{ijk}x^jc\mu^{-1}\check{T}^kT.
 \end{equation*}
Then by $\p_t=L+c^2\mu^{-1}T$, we have
 \begin{equation}\label{relation of Lt and L}
  \left\{
   \begin{aligned}
      \Lt&=L+\big[c^2\mu^{-1}+c\mu^{-1}(\om_i\check{T}^i-\frac{r}{\rho})\big]T+\om_i\ds^Ax^iX_A,\\
     \Lbt&=L+\big[c^2\mu^{-1}-c\mu^{-1}(\om_i\check{T}^i-\frac{r}{\rho})\big]T-\om_i\ds^Ax^iX_A,\\
    \Om_i&=R_i+\ep_{ijk}x^jc\mu^{-1}\check{T}^kT.
   \end{aligned}
  \right.
 \end{equation}
Due to $g(\Tr,\Tr)=1$, then $\f{r^2}{\rho^2}-2(\check T^i\omega_i)\f{r}\rho+g_{ij}\check T^i\check T^j-1=0$
and further
 \begin{equation*}
 \f{r}\rho-1=\f{2\check T^i\omega_i-g_{ij}\check T^i\check T^j}{\sqrt{(\check T^i\omega_i)^2
 -g_{ij}\check T^i\check T^j+1}+1-\check T^i\omega_i}.
 \end{equation*}
This yields that in $D^{\mathfrak t,4\de}$,
 \begin{equation}\label{rrho-1}
 |Z^\al(\f r\rho-1)|\lesssim\delta^{(1-\ve0)p-l}\mathfrak t^{-1},
 \end{equation}
where $|\al|\leq 2N-8$ and $l$ is the number of $T$ in $Z^\al$. Note that $c^2\mu^{-1}+c\mu^{-1}(\omega_i\check T^i-\f r\rho)=c\mu^{-1}(c-1)+c\mu^{-1}+c\mu^{-1}(1-\f r\rho)+c\mu^{-1}\omega_i\check T^i$. Then by \eqref{BA closed}, \eqref{rrho-1} and \eqref{P6.1} (neglecting the unimportant constant $M$), we obtain that in $D^{\mathfrak t,4\de}$,
 \begin{equation}\label{estimate of the distance between Lt and L}
  \begin{split}
  &\de^l|Z^{\be}\big[c^2\mu^{-1}+c\mu^{-1}(\om_i\check{T}^i-\frac{r}{\rho})\big]|\lesssim\delta^{(1-\ve0)p}\mathfrak t^{-1},\\
   &\de^l|Z^{\be}\big[c^2\mu^{-1}- c\mu^{-1}(\om_i\check{T}^i-\frac{r}{\rho})\big]|\lesssim 1,\\
   &\de^l|Z^{\be}(\om_i\ds^Ax^i)|\lesssim 1,\\
   &\de^l|Z^{\be}(\ep_{ijk}x^jc\mu^{-1}\check{T}^k)|\lesssim\de^{(1-\ve0)p},
  \end{split}
 \end{equation}
where $|\be|\leq 2N-8$ and $l$ is the number of $T$ in $Z^{\al}$. Therefore, in $D^{\mathfrak t,4\de}$, combining \eqref{estimate of the distance between Lt and L}, \eqref{relation of Lt and L} with \eqref{BA closed} derives
 \begin{equation}\label{preliminary estimate of Ga vp}
  \de^l|\Ga^{\al}\vp_{\ga}|\lesssim\de^{1-\ve0}t^{-1},\ |\al|\leq 2N-7,
 \end{equation}
 where $l$ is the number of $\tilde{\underline L}$ in $\Ga^\al$.
Recalling that $\phi$ is the solution to \eqref{main equation} and $\vp_{\ga}=\p_{\ga}\phi$,
then by \eqref{preliminary estimate of Ga vp} we have
 \begin{equation}\label{preliminary estimate of Lbt Ga vp}
  \de^l|\Lbt\Ga^{\al}\phi|\lesssim\de^{1-\ve0}t^{-1},\ |\al|\leq 2N-7.
 \end{equation}
Integrating \eqref{preliminary estimate of Lbt Ga vp} along the integral curves of $\Lbt$,
and choosing the zero boundary value on $C_0$, one has that  in $D^{\mathfrak t,4\de}$,
 \begin{equation}\label{preliminary estimate of Ga phi}
  \de^l|\Ga^{\al}\phi|\lesssim\de^{2-\ve0}t^{-1},\ |\al|\leq 2N-7.
 \end{equation}

In addition, it follows from \eqref{rrho-1} that the distance between $C_0$ and $C_{4\delta}$
on the hypersurface $\Sigma_{\mathfrak t}$ is $4\delta
 +O(\delta^{(1-\varepsilon_0)p})$ and the characteristic
 surface $C_u$ $(0\leq u\leq 4\delta)$ is almost straight with the error $O(\delta^{(1-\varepsilon_0)p})$ from
 the corresponding  outgoing light conic surface. Next we improve the estimate \eqref{preliminary estimate of Ga phi}.

 Rewriting the equation \eqref{main equation} as
 \begin{equation}\label{local L Lb phi}
  \t L\t \Lb\phi=\frac{1}{\Theta}\big[\frac{1}{r}(\t L\phi-\t \Lb\phi)+\frac{1}{r^2}\Des\phi-\frac{1}{2^{p+2}}\sum_{i=0}^p(\t L\phi)^i(\t \Lb\phi)^{p-i}(\t L^2\phi+\t \Lb^2\phi)\big],
 \end{equation}
where
 \begin{equation*}
  \Theta=1+\frac{1}{2^{p+1}}\sum_{i=0}^p(\t L\phi)^i(\t \Lb\phi)^{p-i}.
 \end{equation*}

It follows from  \eqref{local L Lb phi} and direct computation that
 \begin{equation}\label{transport equation of rLbt phi}
  \begin{split}
   \Lt(r\Lbt\phi)
   &=\Lbt\phi+r\Lt\Lbt\phi=\frac{1}{\Theta}(\Theta-1)\Lbt\phi+\frac{1}{\Theta}\big[\Lt\phi-\Lbt\phi+\frac{1}{r}\Des\phi+r(\Lt\phi)^p(\Lt^2\phi+\Lbt^2\phi)\big]+\frac{1}{\Theta}G,
  \end{split}
 \end{equation}
where
 \begin{equation*}
  \begin{split}
  G&=r\Lbt\phi\sum_{i=1}^p(\Lbt\phi)^{i-1}(\Lt\phi)^{p-i}(\Lt^2\phi+\Lbt^2\phi).
  \end{split}
 \end{equation*}
This, together with the estimates in \eqref{BA}, yields
 \begin{equation}\label{estimate of Lt(rLbt phi)}
  \begin{split}
   |\Lt(r\Lbt\phi)|
   \lesssim\de^{(1-\ve0)p-1}t^{-p}|r\Lbt\phi|+\de^{2-\ve0}t^{-2}.
  \end{split}
 \end{equation}
Integrating \eqref{estimate of Lt(rLbt phi)} along the integral curves of $\Lt$ from
$t=t_0$ and applying the estimates in Theorem \ref{Theorem local existence}, one then has that on $\Ct_{2\de}$,
 \begin{equation*}
  |\Lbt\phi|\lesssim\de^{2-\ve0}t^{-1}.
 \end{equation*}
Hence, $|\Lt\Lbt\phi|\lesssim\de^{2-\ve0}t^{-2}$ holds by \eqref{local L Lb phi}.
By the induction argument and \eqref{local L Lb phi}, we complete the proof of \eqref{estimate of phi on Ct_2de}.

Finally, we point out that \eqref{estimate of phi on Ct_2de} will play an
important role in solving the global Goursat problem of
\eqref{main equation} in $B_{2\de}$.

\subsection{Global existence of $\phi$ in $B_{2\de}$}\label{Section 8.2}
In this subsection, we establish the global  existence of solution $\phi$ to the equation \eqref{main equation}
in $B_{2\de}$. Set
 \begin{equation}
  D_T=\{(\tau,x): \tau-r\geqslant 2\delta, t_0\leqslant\tau\leqslant T\}\subset B_{2\de}.
 \end{equation}

\begin{lemma}\label{modified Klainerman-Sobolev inequality}
For any function $f(t,x)\in C^{\infty}(\mathbb{R}^{1+3})$, $t\geq 1$, $(t,x)\in D_T$, we have the following inequalities:
\begin{itemize}
  \item For $|x|\leq\frac{1}{4}t$ and any $s\ge 0$, then
        \begin{equation}\label{leq}
         |f(t,x)|\lesssim\sum_{i=0}^2t^{-\frac{3}{2}}\de^{(\frac{1}{2}-i)s}\|\bar{\Ga}^{2-i}f(t,\cdot)\|_{L^2(r\leq\frac{t}{2})}.
        \end{equation}
  \item For $|x|\geq\frac{1}{4}t$, then
        \begin{equation}\label{geq}
         |f(t,x)|\lesssim |f(t,B_t^x)|+t^{-1}\|\Om^{\leq 2}\p^{\leq 1}f(t,\cdot)\|_{L^2(\frac{t}{4}\leq r\leq t-2\de)},
        \end{equation}
\end{itemize}
where $\bar{\Ga}\in\{S,H_i,\Om_j\}$, $(t,B_t^x)$ is the intersection point of the boundary $\Ct_{2\de}$ and the ray crossing $(t,x)$ which emanates from $(t,0)$.
\end{lemma}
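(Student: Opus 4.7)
The plan is to handle the two regions $|x|\leq t/4$ and $|x|\geq t/4$ by distinct Sobolev arguments, since the modified vector fields $\bar\Ga$ degenerate near the boundary $\Ct_{2\de}$.

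For the interior region $|x|\leq t/4$, I would follow the classical Klainerman--Sobolev approach adapted to $\bar\Ga\in\{S,H_i,\Om_j\}$. The key observation is that here $t-r\sim t+r\sim t$, so the linear system
\[
S=t\p_t+r\p_r,\qquad H_i=t\p_i+x^i\p_t
\]
can be inverted to express each Cartesian $\p_\mu$ as $t^{-1}$ times a bounded combination of $\{S,H_j\}$: the coefficient matrix has determinant of order $t^4(1-r^2/t^2)\gtrsim t^4$ whenever $r\leq t/4$. Together with $r^{-1}\Om_j$ generating the angular derivatives, this yields $|\p^k f(t,x)|\lesssim t^{-k}\sum_{|\al|\leq k}|\bar\Ga^\al f(t,x)|$ throughout $r\leq t/2$. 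Applying the standard $H^2\hookrightarrow L^\infty$ embedding on the ball $r\leq t/2$ (after rescaling to unit radius) gives $|f(t,x)|^2\lesssim t^{-3}\bigl(\|f\|_{L^2}^2+t^2\|\p f\|_{L^2}^2+t^4\|\p^2 f\|_{L^2}^2\bigr)$; substituting $\p^k\sim t^{-k}\bar\Ga^{\leq k}$ absorbs the powers of $t$ and produces $|f(t,x)|\lesssim t^{-3/2}\sum_{|\al|\leq 2}\|\bar\Ga^\al f\|_{L^2(r\leq t/2)}$. The $\de$-weighted form \eqref{leq} then follows by a Young-type interpolation that redistributes the three summands with compatible weights, the parameter $s\geq 0$ being free.

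For the exterior region $|x|\geq t/4$, the interior argument fails because $t-r$ can be as small as $2\de$. I would instead use a radial-plus-angular decomposition. Writing $x=|x|\om_0$, the fundamental theorem of calculus along the radial segment from $B_t^x$ to $x$ gives
\[
f(t,x)=f(t,B_t^x)+\int_{|B_t^x|}^{|x|}\p_r f(t,\rho\om_0)\,d\rho,
\]
and it remains to show the radial integral is bounded by $t^{-1}\|\Om^{\leq 2}\p^{\leq 1}f\|_{L^2}$ uniformly in $\om_0$. I would apply Cauchy--Schwarz in $\rho$ over the interval of length $\lesssim t$, producing the factor $t^{1/2}\bigl(\int|\p_r f|^2\,d\rho\bigr)^{1/2}$, and then control the $\om_0$-supremum of the inner $L^2_\rho$ norm via the Sobolev embedding $H^2(\Bbb S^2)\hookrightarrow L^\infty(\Bbb S^2)$ applied to $\om\mapsto\int|\p_r f(t,\rho\om)|^2\,d\rho$. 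The spherical Jacobian $r^2\,d\om\sim t^2\,d\om$ on $\Bbb S_r^2$ contributes an additional factor $t^{-1}$ when converting $L^2(\Bbb S^2)$ to $L^2(\Bbb S_r^2)$, and the combined factors $t^{1/2}\cdot t^{-1}\cdot t^{-1/2}=t^{-1}$ match the claim of \eqref{geq}. The annulus $t/4\leq r\leq t-2\de$ over which the integrations are performed lies entirely inside $D_T$.

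The main technical obstacle lies in the exterior estimate: applying the $\Bbb S^2$-Sobolev embedding to $|\p_r f|^2$ yields, through the product rule, crossterms of the form $(\Om^{\beta_1}\p_r f)(\Om^{\beta_2}\p_r f)$, and one must argue via a further Cauchy--Schwarz (or an interpolation between $L^p$ norms on $\Bbb S^2$) that these contribute $\|\Om^{\leq 2}\p^{\leq 1}f\|_{L^2}$ only linearly rather than quadratically. A cleaner alternative I would attempt in parallel is to apply a three-dimensional Sobolev embedding on a truncated cone slab of size comparable to $t$ containing the point $x$, then convert Cartesian derivatives into $\Om^{\leq 2}\p^{\leq 1}$ via the wave-zone identity $t\p=t\p_r+\Om+O(1)$.
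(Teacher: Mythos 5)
Your overall decomposition (interior via weighted Klainerman--Sobolev, exterior via a radial fundamental-theorem-of-calculus argument from the boundary $\Ct_{2\de}$ plus spherical Sobolev embedding) matches the paper's, but both halves as written contain a gap. In the interior region, you first derive the \emph{unweighted} additive bound $|f|\lesssim t^{-3/2}\sum_{|\al|\le 2}\|\bar\Ga^{\al}f\|_{L^2}$ and then assert that \eqref{leq} "follows by a Young-type interpolation that redistributes the three summands." It does not: \eqref{leq} carries the coefficient $\de^{s/2}<1$ on the \emph{top-order} norm $\|\bar\Ga^{2}f\|_{L^2}$, so it is strictly stronger than the unweighted bound on that term, and no redistribution of an additive inequality $X\lesssim A_0+A_1+A_2$ can produce $X\lesssim \de^{s/2}A_0+\de^{-s/2}A_1+\de^{-3s/2}A_2$. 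The $\de$-weights must be built in from the start: the paper performs the anisotropic change of variables $x=t\de^{s}y$ \emph{before} applying the $H^2\hookrightarrow L^\infty$ embedding, so that each derivative costs $t\de^{s}$ and the volume element contributes $(t\de^{s})^{-3/2}$, which is exactly where $\de^{(\frac12-i)s}$ comes from (this is the "refined inner estimate" the paper flags as the new ingredient). Alternatively a multiplicative Gagliardo--Nirenberg form would do, but the additive route you wrote down cannot.

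In the exterior region your route loses a factor $t^{1/2}$. Applying the fundamental theorem of calculus to $f$ and then Cauchy--Schwarz in $\rho$ over an interval of length $\sim t$ gives $t^{1/2}\bigl(\int|\p_{\rho}f|^{2}\,d\rho\bigr)^{1/2}$, and the conversion $L^2(\mathbb S^2)\to L^2(S_t^{\rho})$ supplies only one factor $\rho^{-1}\sim t^{-1}$ inside that square root; the net result is $t^{-1/2}\|\Om^{\le2}\p f\|_{L^2(t/4\le r\le t-2\de)}$, not the claimed $t^{-1}$ (your tally $t^{1/2}\cdot t^{-1}\cdot t^{-1/2}$ includes a $t^{-1/2}$ with no source). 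The paper avoids this by applying the fundamental theorem of calculus to $f^{2}$ rather than $f$: then $\p_{\rho}(f^{2})=2f\,\p_{\rho}f$, the $W^{2,1}(\mathbb S^2)\hookrightarrow L^\infty$ embedding gives $\rho^{-2}\|\Om^{\le2}f\|_{L^2(S_t^{\rho})}\|\Om^{\le2}\p f\|_{L^2(S_t^{\rho})}$, and Cauchy--Schwarz in $\rho$ now pairs two $L^2_{\rho}$ factors so that no length of the interval enters, yielding $f^2\lesssim f^2(t,B_t^x)+t^{-2}\|\Om^{\le2}\p^{\le1}f\|_{L^2}^2$ and hence \eqref{geq}. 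Incidentally, the "main technical obstacle" you identify (cross terms from differentiating $|\p_rf|^2$ on the sphere) is not an obstacle at all — Cauchy--Schwarz on $\mathbb S^2$ bounds each product $\|\Om^{\be_1}\p_rf\cdot\Om^{\be_2}\p_rf\|_{L^1}$ by $\|\Om^{\le2}\p_rf\|_{L^2}^2$ — the real issue is the missing power of $t$.
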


\begin{remark}
It should be remarked that although this Lemma and its proof are similar to those of Proposition  3.1 in \cite{MPY},
the refined inner estimate, \eqref{leq} is new (the appearance of factor $\delta^{(\f12-i)s}$) and is crucial for
 the treatment of the short pulse initial data as in \cite{Ding3}.
\end{remark}

\begin{proof}
	Let $\chi$ be a non-negative smooth cut off function on $\Bbb R_+$ such that
	\begin{equation*}
	\chi(t)=
	\begin{cases}
	1,&\ 0\leq t\leq\f14,\\
	0,&\ t\geq\f12.
	\end{cases}
	\end{equation*}
	Set $f_1(t,x)=\chi(\f{|x|}{t})f(t,x)$ and $f_2(t,x)=\big(1-\chi(\f{|x|}{t})\big)f(t,x)$, then $f(t,x)=f_1(t,x)+f_2(t,x)$ and $\textrm{supp}f_1\subset\{(t,x)\mid |x|\leq\f12t\}$, $\textrm{supp}f_2\subset\{(t,x)\mid |x|\geq\f14t\}$.
	\begin{enumerate}
		\item For any point $(t,x)$ satisfying $|x|\leq\f14t$, $f_1(t,x)=f(t,x)$. We perform the change of variable $x=t\delta^sy$,
and make use of the Sobolev embedding theorem for $y$ to have
		\begin{equation}\label{ins}
		\begin{split}
		|f(t,x)|&=|f_1(t,t\delta^sy)|\lesssim\sum_{|\al|\leq 2}\Big(\int_{\Bbb R^3}|\p_y^\al\big(f(t,t\delta^sy)\chi(\delta^sy)\big)|^2dy\Big)^{1/2}\\
		&\lesssim\sum_{|\al|\leq 2}\Big(\int_{|\delta^sy|\leq 1/2}(t\delta^s)^{2|\al|}|(\p_x^\al f)(t,t\delta^sy)|^2dy\Big)^{\f12}\\
		&\lesssim\sum_{|\al|\leq 2}\Big(\int_{|z|\leq\f12t}(t\delta^s)^{2|\al|-3}|\p_z^\al f(t,z)|^2dz\Big)^{1/2}.
		\end{split}
		\end{equation}
Note that
	\[
	\p_{i}=-\f{1}{t-r}\big(\f{x^i}{t+r}S-\f{t}{t+r}H_i+\f{x^j}{t+r}{\epsilon_{ji}}^k\O_k\big)
	\]
	and $t\sim t-|z|$ in the domain $\{(t,z): |z|\leq\f12t\}$. Then we have
	\begin{equation}\label{tf}
	\begin{split}
	&|t\p_zf(t,z)|\lesssim|(t-|z|)\p_zf(t,z)|\lesssim|\bar{\Gamma} f(t,z)|,\\
	&|t^2\p_z^2f(t,z)|\lesssim\sum_{|\al|\leq 2}|\bar\Gamma^2 f(t,z)|.
	\end{split}
	\end{equation}
	Therefore, substituting \eqref{tf} into \eqref{ins} yields
    \begin{equation*}
    \mid f(t,x)\mid\lesssim\sum_{i=0}^2 t^{-3/2}\delta^{(\f12-i)s}\|\bar{\Gamma}^{2-i}f(t,\cdot)\|_{L^2(r\leq\f12t)}.
    \end{equation*}
    \item
    When $(t,x)$ satisfies $|x|\geq\f14t$, by Newton-Leibnitz formula and the Sobolev embedding theorem on the spheres $S_t^\rho$
    with  radius $\rho$ and center at the origin on $\Sigma_t$, we have
    \begin{equation*}
    \begin{split}
    f^2(t,x)&=f^2(t,B_t^x)-\int_{|x|}^{t-2\delta}\p_\rho\big(f^2(t,\rho\omega)\big)d\rho\\
    &\lesssim f^2(t,B_t^x)+\int_{|x|}^{t-2\delta}\f{1}{\rho^2}\sum_{|\al|,|\beta|\leq 2}\|\Omega^\al f\|_{L^2(S_t^\rho)}\|\Omega^{{\beta}} \p f\|_{L^2(S_t^\rho)}d\rho\\
    &\lesssim f^2(t,B_t^x)+\sum_{|\al|\leq 2,|\beta|\leq 1}t^{-2}\|\Omega^\al\p^\beta f(t,\cdot)\|^2_{L^2(t/4\leq r\leq t-2\delta)}.
    \end{split}
    \end{equation*}
  This completes the proof of \eqref{geq}.
    	\end{enumerate}
\end{proof}

Define the energy
 \begin{equation}\label{Definition energy}
  E_{k,l}(t)=\int_{\Si_t\cap D_T}\big[(\p_tv)^2+|\na v|^2\big],\ k+l\leq 6,
 \end{equation}
where $v=\widetilde{\Ga}^k\Om^l\phi$, $\widetilde{\Ga}\in\{\p,S,H_i\}$. Thanks to the estimates on $\Si_{t_0}$
in Theorem \ref{Theorem local existence},
we make the following bootstrap assumptions:

For $t\geq t_0$, there exists a uniform constant $M_0$ such that
 \begin{equation}\label{global BA}
  \begin{split}
   E_{k,l}(t)&\leq M_0^2\de^{4-2\ve0},\ k=0,1,\\
   E_{k,l}(t)&\leq M_0^2\de^{7-2k-2\ve0},\ 2\leq k\leq 6.
  \end{split}
 \end{equation}

We next establish the following $L^{\infty}$ estimates:
\begin{proposition}\label{global L infty estimate}
Under the assumptions \eqref{global BA}, for sufficiently small $\de>0$, it holds that in $D_T$,
 \begin{equation}\label{YHC-3}
  \begin{split}
   |\p\Om^{\leq 3}\phi|&\lesssim M_0\de^{\frac{13}{8}-\ve0}t^{-1},\\
   |\p\widetilde{\Ga}\Om^{\leq 2}\phi|&\lesssim M_0\de^{\frac{7}{8}-\ve0}t^{-1},\\
   |\p\widetilde{\Ga}^2\Om^{\leq 1}\phi|&\lesssim M_0\de^{-\ve0}t^{-1},\\
   |\p\widetilde{\Ga}^3\phi|&\lesssim M_0\de^{-1-\ve0}t^{-1}.
  \end{split}
 \end{equation}
\end{proposition}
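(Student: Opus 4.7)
The plan is to apply the modified Klainerman-Sobolev inequality of Lemma \ref{modified Klainerman-Sobolev inequality} to each of the four functions $f=\p\widetilde{\Ga}^k\Om^m\phi$ with $k+m\le 3$, splitting $D_T$ into the interior region $\{r\le t/4\}$ and the exterior region $\{r\ge t/4\}$, and bounding the resulting norms by the bootstrap energies \eqref{global BA} together with the boundary estimate \eqref{estimate of phi on Ct_2de} on $\Ct_{2\de}$.

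In the interior, I would invoke \eqref{leq}, which offers for each admissible $s\ge0$ and each $i\in\{0,1,2\}$ a bound of the form
\[
|f(t,x)|\lesssim t^{-3/2}\de^{(1/2-i)s}\|\bar{\Ga}^{2-i}f\|_{L^2(r\le t/2)}.
\]
Since $\bar\Ga\in\{S,H_i,\Om\}$ and every commutator $[\bar\Ga,\p]$ is itself first-order in $\p$, modulo genuinely lower-order terms $\bar\Ga^{2-i}f$ reduces to $\p\widetilde{\Ga}^{k'}\Om^{m'}\phi$ with $k'+m'=k+m+2-i$ and $k'\le k+2-i$, so its $L^2$-norm is controlled by $E_{k',m'}^{1/2}$. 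The bootstrap energies in \eqref{global BA} then give $E_{k',m'}^{1/2}\lesssim M_0\de^{2-\ve0}$ when $k'\le 1$ and $E_{k',m'}^{1/2}\lesssim M_0\de^{7/2-k'-\ve0}$ when $k'\ge2$, showing that each extra $\widetilde{\Ga}$-derivative beyond the second costs a factor $\de^{-1}$ in $L^2$. The idea is then to optimize $s$ so that all three contributions $i=0,1,2$ simultaneously yield the target $\de$-power. For $k=3$, taking $s=1$ balances $\de^{s/2}\cdot\de^{-3/2}=\de^{-s/2}\cdot\de^{-1/2}=\de^{-3s/2}\cdot\de^{1/2}=\de^{-1}$, matching $\de^{-1-\ve0}$; analogously, $s=1$ serves for $k=2$, $s=3/4$ for $k=1$, and $s=1/4$ for $k=0$. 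Since $t\ge t_0\ge 1$, the factor $t^{-3/2}$ is automatically at most $t^{-1}$, delivering the required time-decay.

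In the exterior, \eqref{geq} reduces the task to estimating the boundary trace $f(t,B_t^x)$ on $\Ct_{2\de}$ and the $L^2$-quantity $t^{-1}\|\Om^{\le 2}\p^{\le 1}f\|_{L^2}$. For the boundary trace, I would use \eqref{estimate of phi on Ct_2de}: because $t-r=2\de$ on $\Ct_{2\de}$, the decompositions $S=\de\Lbt+\f{t+r}{2}\Lt$ and $H_i=\om_i\bigl(-\de\Lbt+\f{t+r}{2}\Lt\bigr)+\f{t\om^j}{r}{\epsilon_{ji}}^k\Om_k$ convert each $\widetilde{\Ga}$ into admissible $\Ga$-derivatives at no loss, yielding $|f|_{\Ct_{2\de}}\lesssim\de^{2-\ve0}t^{-1}$ uniformly for $k+m\le 3$, which dominates every target bound. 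For the $L^2$-term, $\Om^{\le 2}\p^{\le 1}\p\widetilde{\Ga}^k\Om^m\phi$ rearranges (again up to admissible commutators) to $\p v$ with $v=\widetilde{\Ga}^{\le k+1}\Om^{\le m+2}\phi$, whose total order lies within the bootstrap range $k'+l'\le 6$, and the energy bound yields a quantity comfortably below the target after the $t^{-1}$ prefactor.

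The main technical subtlety is the simultaneous balancing of the three $i$-indexed terms through the free parameter $s$: the constraint at $i=0$ forces $s$ from below while the constraint at $i=2$ forces $s$ from above, and only the specific values listed close this two-sided window. A secondary bookkeeping issue is to verify that the commutators $[\bar\Ga^{2-i},\p]$ and $[\Om^{\le 2}\p^{\le 1},\p]$ produce only terms whose $L^2$-norms are already captured by lower-order energies in \eqref{global BA}; this follows from the standard commutation identities in Minkowski space and remains within the same bootstrap range. The resulting pointwise bounds \eqref{YHC-3} will then substantially improve the $L^\infty$-content of the assumptions \eqref{global BA} for the subsequent closure step.
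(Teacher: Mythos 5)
Your proposal is correct and follows essentially the same route as the paper: interior bounds via the weighted Klainerman--Sobolev inequality \eqref{leq} with the identical choices $s=\tfrac14,\tfrac34,1,1$ (and your exponent balancing across $i=0,1,2$ reproduces the paper's computation), and exterior bounds via \eqref{geq} combined with the trace estimate \eqref{estimate of phi on Ct_2de} on $\Ct_{2\de}$ and the bootstrap energies. The only difference is that you spell out the conversion of $\widetilde{\Ga}$ into the boundary-adapted fields $\Ga$ on $\Ct_{2\de}$ and the commutator bookkeeping, which the paper leaves implicit.
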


\begin{proof}
\begin{itemize}
  \item When $|x|\leq\frac{t}{4}$, it follows from Lemma \ref{modified Klainerman-Sobolev inequality} that
        \begin{equation*}
         \begin{split}
          |\p\Om^{\leq 3}\phi|
          &\lesssim t^{-\frac{3}{2}}\{\de^{-\frac{3}{2}s}\|\p\Om^{\leq 3}\phi\|_{L^2(\Si_t\cap D_T)}+\de^{-\frac{1}{2}s}\|\bar{\Ga}\p\Om^{\leq 3}\phi\|_{L^2(\Si_t\cap D_T)}+\de^{\frac{1}{2}s}\|\bar{\Ga}^2\p\Om^{\leq 3}\phi\|_{L^2(\Si_t\cap D_T)}\}\\
          &\lesssim t^{-\frac{3}{2}}\{\de^{-\frac{3}{2}s}\sqrt{E_{0,l}}+\de^{-\frac{1}{2}s}\sqrt{E_{1,l}}+\de^{\frac{1}{2}s}\sqrt{E_{2,l}}\}\\
          &\lesssim t^{-\frac{3}{2}}M_0\{\de^{-\frac{3}{2}s+2-\ve0}+\de^{-\frac{1}{2}s+2-\ve0}+\de^{\frac{1}{2}s+\frac{3}{2}-\ve0}\}.
         \end{split}
        \end{equation*}
        Choosing $s=\frac{1}{4}$, then
        \begin{equation}\label{case1}
         |\p\Om^{\leq 3}\phi|\lesssim M_0\de^{\frac{13}{8}-\ve0}t^{-\frac{3}{2}}.
        \end{equation}
  \item When $\frac{t}{4}\leq|x|\leq t-2\de$, by Lemma \ref{modified Klainerman-Sobolev inequality} and \eqref{estimate of phi on Ct_2de}, one has
        \begin{equation}\label{case2}
        \begin{split}
         |\p\Om^{\leq 3}\phi|\lesssim &|\p\Om^{\leq 3}\phi(t,B_t^x)|+t^{-1}\|\Om^{\leq 2}\p^{\leq 1}\p\Om^{\leq 3}\phi(t,\cdot)\|_{L^2(\frac{t}{4}\leq r\leq t-2\de)}\\
         \lesssim&\delta^{2-\ve0}t^{-1}+t^{-1}\big(\sqrt{E_{0,\leq 6}}+\sqrt{E_{1,\leq 5}}\big)\\
         \lesssim &M_0\de^{2-\ve0}t^{-1}.
         \end{split}
        \end{equation}
\end{itemize}
Combining \eqref{case1} and \eqref{case2} yields
 \begin{equation*}
  |\p\Om^{\leq 3}\phi|\lesssim M_0\de^{\frac{13}{8}-\ve0}t^{-1}.
 \end{equation*}

For the other cases, the procedures of proof are analogous. Indeed, in  Lemma \ref{modified Klainerman-Sobolev inequality},
\begin{itemize}
  \item for $\p\widetilde{\Ga}^1\Om^{\leq 2}\phi$, let $s=\frac{3}{4}$;
  \item for $\p\widetilde{\Ga}^2\Om^{\leq 1}\phi$, let $s=1$;
  \item for $\p\widetilde{\Ga}^3\phi$, let $s=1$.
\end{itemize}
Then Proposition \ref{global L infty estimate} is proved.
\end{proof}


Next we take the energy estimates of $\phi$ in $D_T$.
It follows from the integration $\p_tvg^{\al\be}\p_{\al\be}^2v$ over $D_t$ by parts and direct computation that
 \begin{equation}\label{YH-14}
  \begin{split}
   &\int_{\Si_t\cap D_t}\big[\frac{1}{c^2}(\p_tv)^2+|\na v|^2\big]\\
   \lesssim&\int_{\Si_{t_0}\cap D_t}\big[\frac{1}{c^2}(\p_tv)^2+|\na v|^2\big]+\int_{\Ct_{2\de}\cap D_t}|\frac{1}{2}\big[\frac{1}{c^2}(\p_tv)^2+|\na v|^2\big]+\om^i\p_iv\p_tv|\\
   &+\int_{D_t}|\p_tvg^{\al\be}\p_{\al\be}^2v|+\int_{D_t}|(\p_t\phi)^{p-1}\p_t^2\phi|(\p_tv)^2.
  \end{split}
 \end{equation}
On $\Ct_{2\de}$, by the estimate \eqref{estimate of phi on Ct_2de}, we have
 \begin{equation}\label{YH-15}
  \begin{split}
   &\int_{\Ct_{2\de}\cap D_t}|\frac{1}{2}\big[\frac{1}{c^2}(\p_tv)^2+|\na v|^2\big]+\om^i\p_iv\p_tv|\\
   \lesssim&\int_{\Ct_{2\de}\cap D_t}\big\{\Sigma_{i=1}^3(\om^i\p_tv+\p_iv)^2+|\p_t\phi|^p(\p_tv)^2\big\}\\
   \lesssim&\int_{\Ct_{2\de}\cap D_t}\left\{(Lv)^2+\sum_{i=1}^3(\frac{\om^j}{r}\ep_{ji}{}^m\Om_mv)^2+|\p_t\phi|^p(\p_tv)^2\right\}\\
   \lesssim&\de^{4-2\ve0}.
  \end{split}
 \end{equation}
On the initial hypersurface $\Si_{t_0}\cap D_t$, with the help of Theorem \ref{Theorem local existence}, one has
 \begin{equation}\label{YH-16}
 \int_{\Si_{t_0}\cap D_t}\big[\frac{1}{c^2}(\p_tv)^2+|\na v|^2\big]\lesssim
  \left\{
  \begin{aligned}
   \de\cdot(\de^{2-\ve0})^2&=\de^{5-2\ve0},\ k\leq\min\{1,6-l\},\\
   \de\cdot(\de^{3-k-\ve0})^2&=\de^{7-2k-2\ve0},\ 2\leq k\leq 6-l.
  \end{aligned}
  \right.
 \end{equation}
Therefore, it follows from \eqref{YH-14}-\eqref{YH-16} and the Gronwall's inequality that
 \begin{align}
  &E_{k,l}(t)\lesssim\delta^{4-2\varepsilon_0}+\int_{D_t}\mid (\partial_t\tilde\Gamma^k\Omega^l\phi)(g^{\alpha\beta}\partial_{\alpha\beta}^2\tilde\Gamma^k\Omega^l\phi)\mid,\quad k\leqslant \min\{1, 6-l\},\label{E01}\\
  &E_{k,l}(t)\lesssim\delta^{7-2k-2\varepsilon_0}+\int_{D_t}\mid (\partial_t\tilde\Gamma^k\Omega^l\phi)(g^{\alpha\beta}\partial_{\alpha\beta}^2\tilde\Gamma^k\Omega^l\phi)\mid,\quad 2\leqslant k\leqslant 6-l.\label{E2-6}
 \end{align}

The remaining task is to estimate the term $\int_{D_t}\mid(\partial_t\tilde\Gamma^k\Omega^l\phi)(g^{\alpha\beta}\partial_{\alpha\beta}^2\tilde\Gamma^k\Omega^l\phi)\mid$
in the right hand sides of \eqref{E01} and \eqref{E2-6}.
Acting the operator $\tilde{\Gamma}^k\Omega^l$ on $\eqref{main equation}$ and commuting it with $g^{\alpha\beta}\partial_{\alpha\beta}^2$ yield
 \begin{equation}\label{YHC-2}
  |g^{\alpha\beta}\partial_{\alpha\beta}^2\tilde{\Gamma}^k\Omega^l\phi|\lesssim\sum_{\substack{k_1+\cdots k_{p+1}\leqslant k\\l_1+\cdots l_{p+1}\leqslant l\\k_{p+1}+l_{p+1}<k+l}}|\partial\widetilde{\Gamma}^{k_1}\Omega^{l_1}\phi|\cdots|\partial\widetilde{\Gamma}^{k_p}\Omega^{l_p}\phi|\cdot|\partial^2\widetilde{\Gamma}^{k_{p+1}}\Omega^{l_{p+1}}\phi|.
 \end{equation}

\textbf{1. The treatment for the cases of $k=0$ and $l\leq 6$}
\begin{itemize}
  \item If $l_i\leq l_{p+1}$ holds for all $1\leq i\leq p$, then $l_i\leq 3\ (1\leq i\leq p)$,
 and it follows from Proposition \ref{global L infty estimate} and \eqref{global BA} that
        \begin{equation*}
         \begin{split}
          &\int_{D_t}|\p\Om^{\leq 6}\phi|\cdot|\p\Om^{l_1}\phi|\cdots|\p\Om^{l_p}\phi|\cdot|\p^2\Om^{l_{p+1}}\phi|\\
         \lesssim&\int_{D_t}|\p\Om^{\leq 6}\phi|\cdot|\p\Om^{\leq 3}\phi|^p\cdot|\p^2\Om^{\leq 5}\phi|\\
          \lesssim&\int_{t_0}^tM_0^p\de^{(\frac{13}{8}-\ve0)p}\tau^{-p}E_{0,l}(\tau)\d\tau
          +\int_{t_0}^tM_0^p\de^{(\frac{13}{8}-\ve0)p}\tau^{-p}E_{1,l}(\tau)\d\tau\\
          \lesssim&\de^{4-2\ve0}.
         \end{split}
        \end{equation*}
  \item If there exists one $i$ $(1\leq i\leq p)$ such that $l_i>l_{p+1}$, then $l_{p+1}\leq 2$, and
        \begin{equation*}
         \begin{split}
          &\int_{D_t}|\p\Om^{\leq 6}\phi|\cdot|\p\Om^{l_1}\phi|\cdots|\p\Om^{l_p}\phi|\cdot|\p^2\Om^{l_{p+1}}\phi|\\
          \lesssim&\int_{D_t}|\p\Om^{\leq 6}\phi|\cdot|\p\Om^{\leq 3}\phi|^{p-1}\cdot|\p\Om^{\leq 6}\phi|\cdot|\p^2\Om^{\leq 2}\phi|\\
          \lesssim&\int_{t_0}^tM_0^p\de^{(\frac{13}{8}-\ve0)(p-1)+\frac{7}{8}-\ve0}\tau^{-p}E_{0,l}(\tau)\d\tau\\
          \lesssim&\delta^{4-2\ve0},
         \end{split}
        \end{equation*}
        where $(\frac{13}{8}-\ve0)(p-1)+\frac{7}{8}-\ve0>(1-\ve0)p-1>0$.
\end{itemize}
Therefore,
 \begin{equation*}
  E_{0,l}(t)\lesssim\de^{4-2\ve0}.
 \end{equation*}

\textbf{2. The treatment for the cases of $k=1$ and $l\leq 5$}
\begin{itemize}
  \item If $k_i+l_i\leq k_{p+1}+l_{p+1}$ for all $1\leq i\leq p$, then $k_i+l_i\leq 3$, and
        \begin{equation*}
         \begin{split}
          &\quad\int_{D_t}|\p\widetilde{\Ga}\Om^{\leq 5}\phi|\cdot|\p\widetilde{\Ga}^{k_1}\Om^{l_1}\phi|\cdots|\p\widetilde{\Ga}^{k_p}\Om^{l_p}\phi|\cdot|\p^2\widetilde{\Ga}^{k_{p+1}}\Om^{l_{p+1}}\phi|\\
          &\lesssim\int_{D_t}\{|\p\widetilde{\Ga}\Om^{\leq 5}\phi|\cdot|\p\Om^{\leq 3}\phi|^p\cdot|\p^2\widetilde{\Ga}\Om^{\leq 4}\phi|+|\p\widetilde{\Ga}\Om^{\leq 5}\phi|\cdot|\p\Om^{\leq 3}\phi|^{p-1}\cdot|\p\widetilde{\Ga}\Om^{\leq 2}\phi|\cdot|\p^2\Om^{\leq 5}\phi|\}\\
          &\lesssim\int_{t_0}^tM_0^p\de^{(\frac{13}{8}-\ve0)p}\tau^{-p}E_{1,l}(\tau)\d\tau
          +\int_{t_0}^tM_0^p\de^{(\frac{13}{8}-\ve0)p}\tau^{-p}E_{2,l}(\tau)\d\tau\\
          &\qquad +\int_{t_0}^tM_0^p\de^{(\frac{13}{8}-\ve0)(p-1)+\frac{7}{8}-\ve0}\tau^{-p}E_{1,l}(\tau)\d\tau\\
          &\lesssim\de^{4-2\ve0}.
         \end{split}
        \end{equation*}
  \item If there exists one $i$ $(1\leq i\leq p)$ such that $k_i+l_i>k_{p+1}+l_{p+1}$, then $k_{p+1}+l_{p+1}\leq 2$, and
        \begin{equation*}
         \begin{split}
          &\int_{D_t}|\p\widetilde{\Ga}\Om^{\leq 5}\phi|\cdot|\p\widetilde{\Ga}^{k_1}\Om^{l_1}\phi|\cdots|\p\widetilde{\Ga}^{k_p}\Om^{l_p}\phi|\cdot|\p^2\widetilde{\Ga}^{k_{p+1}}\Om^{l_{p+1}}\phi|\\
          \lesssim&\int_{D_t}|\p\widetilde{\Ga}\Om^{\leq 5}\phi|\cdot\{|\p\Om^{\leq 3}\phi|^{p-1}\cdot|\p\widetilde{\Ga}\Om^{\leq 5}\phi|\cdot|\p^2\Om^{\leq 2}\phi|+|\p\Om^{\leq 3}\phi|^{p-1}\cdot|\p\Om^{\leq 6}\phi|\cdot|\p^2\widetilde{\Ga}\Om^{\leq 1}\phi|\\
          &\qquad+|\p\Omega^{\leq 3}\phi|^{p-2}\cdot|\p\widetilde{\Ga}\Omega^{\leq 2}\phi|\cdot|\p\Omega^{\leq 6}\phi|\cdot|\p^2\Omega^{\leq 2}\phi|\}\\
          \lesssim&\de^{4-2\ve0}.
         \end{split}
        \end{equation*}
\end{itemize}
Therefore,
 \begin{equation*}
  E_{1,l}(t)\lesssim\de^{4-2\ve0}.
 \end{equation*}

\textbf{3. The treatment for the other left cases in \eqref{YHC-2}}

For the cases of $k+l\leq 6$ $(k\geq 2)$, the treatment
procedure is exactly similar to that for the cases of $k=0$ and $k=1$,
the details are omitted. Then we can get
$$
E_{k,l}(t)\lesssim\delta^{7-2k-2\ve0},
$$
which is independent of $M_0$.

Note that all the bounded constants above are independent of $M_0$.
Then the bootstrap assumptions \eqref{global BA} can be closed.
By combining with the local existence of solution $\phi$ to \eqref{main equation} and the continuous argument,
the global existence of $\phi$ in $B_{2\de}$ is established.

\vskip 0.3 true cm

Finally, we prove Theorem \ref{main theorem}.
\begin{proof}
By Theorem \ref{Theorem local existence}, we have got the local existence of smooth solution $\phi$ to
equation \eqref{main equation} with \eqref{initial data}- \eqref{condition on data}.
On the other hand,  the global existence of $\phi$ near  $C_0$
and in $B_{2\de}$ has been established in Section \ref{Section 8}.
Then it follows from the uniqueness of smooth solution to \eqref{main equation}
that the proof of $\phi\in C^\infty([1,+\infty)\times\Bbb R^3)$  is finished.
In addition, $|\p\phi|\lesssim\delta^{1-\varepsilon_0}t^{-1}$ comes from Theorem \ref{Theorem local existence},
\eqref{BA} and
the first inequality in \eqref{YHC-3}. Thus Theorem \ref{main theorem}
is proved.

\end{proof}

\section*{Appendix}

\appendix

\section{The existence of short pulse initial data with \eqref{condition on data}}\label{Section 3.1}
In this section, we give the existence of short pulse initial data which satisfy \eqref{condition on data}.

Due to $\p_t^2\phi=c^2(\p_r^2\phi+\frac{2}{r}\p_r\phi+\frac{1}{r^2}\Des\phi)$, then
\begin{align*}
{\t L}^2\phi&=c^2(\p_r^2\phi+\frac{2}{r}\p_r\phi+\frac{1}{r^2}\Des\phi)+\p_r^2\phi+2\p_r(\p_t\phi)\\
&=(c^2+1)\p_r^2\phi+\frac{2c^2}{r}\p_r\phi+\frac{c^2}{r^2}\Des\phi+2\p_r(\p_t\phi).
\end{align*}
Assume that $\phi_0(s,\om)\in C_0^{\infty}\left((-1,0)\times\mathbb{S}^2\right)$ of \eqref{initial data} is chosen as the fixed smooth function.
By virtue of \eqref{initial data}, the
derivatives of $\phi$ (up to the second order) can be computed on $\Si_1$ as follows
\begin{equation*}
\left\{
\begin{aligned}
&\p_t\phi=\de^{1-\ve0}\phi_1, \p_r\phi=\de^{1-\ve0}\p_s\phi_0,\\
&\p_r(\p_t\phi)=\de^{-\ve0}\p_s\phi_1, \p_r^2\phi=\de^{-\ve0}\p_s^2\phi_0, \Des\phi=\de^{2-\ve0}\Des\phi_0.
\end{aligned}
\right.
\end{equation*}
Then ${\t L}^2\phi|_{t=1}$ can be expressed as
\begin{equation*}
{\t L}^2\phi|_{t=1}=(c^2+1)\de^{-\ve0}\p_s^2\phi_0+\frac{2c^2}{r}\de^{1-\ve0}\p_s\phi_0+\frac{c^2}{r^2}\de^{2-\ve0}\Des\phi_0
+2\de^{-\ve0}\p_s\phi_1.
\end{equation*}
It is claimed that  $\phi_1$ in \eqref{initial data} can be chosen such that
\begin{equation}\label{YHC-4}
{\t L}^2\phi|_{t=1}=O(\de^{2-\ve0}),
\end{equation}
where $\phi_1(s,\om)\in C_0^{\infty}\left((-1,0)\times\mathbb{S}^2\right)$.
We now make the following assumption for $\phi_1$ with
\begin{equation}\label{ansatz for phi0}
|\p_s\phi_1|\leq C,
\end{equation}
where the constant $C>0$ depends only on $\phi_0$ but is independent of $\de$.

By \eqref{ansatz for phi0},
in order to show \eqref{YHC-4}, it suffices to prove
\begin{equation}\label{Y-4}
(1+\frac{1}{c^2})\p_s^2\phi_0+\frac{2}{r}\de\p_s\phi_0+2c^{-2}\p_s\phi_1=O(\de^2).
\end{equation}
Since $(1+\frac{1}{c^2})^{-1}=\frac{1}{2}-\frac{1}{4}\de^{(1-\ve0)p}\phi_1^p+O(\de^{(1-\ve0)2p})$,
it then follows from \eqref{Y-4} that
\begin{equation*}
\p_s^2\phi_0+(\frac{2}{r}\de\p_s\phi_0+2c^{-2}\p_s\phi_1)\big(\frac{1}{2}-\frac{1}{4}\de^{(1-\ve0)p}\phi_1^p+O(\de^{(1-\ve0)2p})\big)=O(\de^2).
\end{equation*}
This, together with $p>p_c=\f{1}{1-\ve0}$, yields
\begin{equation}\label{Y-5}
\p_s^2\phi_0+\frac{\de}{r}(1-\frac{1}{2}\de^{(1-\ve0)p}\phi_1^p)\p_s\phi_0
+(1-\frac{1}{2}\de^{(1-\ve0)p}\phi_1^p)c^{-2}\p_s\phi_1=O(\de^2).
\end{equation}
Due to $r=1+s\de$ and $p>p_c$, in order to let \eqref{Y-5} hold,
then one can set
\begin{align*}
&\p_s^2\phi_0+\de\p_s\phi_0+(1+\frac{1}{2}\de^{(1-\ve0)p}\phi_1^p)\p_s\phi_1=0,
\end{align*}
which derives $F(\phi_1,\de)\equiv \p_s\phi_0+\de\phi_0+(\phi_1+\frac{\de^{(1-\ve0)p}}{2(p+1)}\phi_1^{p+1})=0$.
Note that $F(-\p_s\phi_0,0)=0$ and $\p_{\phi_1}F(\phi_1,\de)=1+\frac{1}{2}\de^{(1-\ve0)p}\phi_1^p>0$ for small $\de>0$.
Then it follows from the implicit function theorem and $F(\phi_1,\de)=0$ that $\phi_1(s,\om)
\in C_0^{\infty}\left((-1,0)\times\mathbb{S}^2\right)$ and \eqref{YHC-4} are obtained.
On the other hand, by $\p_t=\f12(\t L-\t{\underline L})$ and $\t{\underline L}\t L\phi=(c^2-1)\triangle \phi$, we have
\begin{equation*}
\p_r{\t L}\phi|_{t=1}=\frac{1}{2}[{\t L}^2\phi+(1-c^2)\p_r^2\phi-\frac{2c^2}{r}\p_r\phi-\frac{c^2}{r^2}\Des\phi]|_{t=1}=O(\de^{1-\ve0}).
\end{equation*}
Then it follows from the integration with respect to $r$ that
\begin{equation*}
|{\t L}\phi|_{t=1}|\leq\int_{1-\de}^1|\p_r{\t L}\phi|_{t=1}|\d r=O(\de^{2-\ve0}).
\end{equation*}

\vskip 0.5 true cm

\end{document}